\newif\ifdraft
\newtheorem{theorem}{Theorem}
\newtheorem{corollary}[theorem]{Corollary}
\newtheorem{lemma}[theorem]{Lemma}
\newtheorem{conjecture}[theorem]{Conjecture}
\newtheorem{proposition}[theorem]{Proposition}
\theoremstyle{definition}
\newtheorem{remark}[theorem]{Remark}
\newtheorem{assumption}[theorem]{Assumption}
\newtheorem{example}[theorem]{Example}
\newtheorem{question}[theorem]{Question}
\newtheorem{definition}[theorem]{Definition}
\def \C{{ \mathbb C }}
\def \Z{{ \mathbb Z }}
\def \P{{ \mathbb P }} 
\def \Q{{ \mathbb Q }}
\def \A{{ \mathbb A }}
\def \D{{ \mathcal D}}
\def \S{{ \mathfrak S}}
\def \CO{{ \mathcal{O} }}
\def \CA{{ \mathcal{A} }}
\def \CB{{ \mathcal{B} }}
\def \CC{{ \mathcal{C} }}
\def \h{{ \mathfrak{h} }}           
\def \Sym{{ 	\mathsf{Sym} }}
\def \Hilb{{ 	\mathsf{Hilb}}}
\def \Blow{{ 	\mathsf{Blow}}}
\def \Pic{{     \mathsf{Pic} }}
\def \Spec{{ 	\mathsf{Spec}}}
\def \Rep{{     \mathsf{Rep} }}
\def \ch{{    \mathsf{char}~ }}
\def \Gal{{     \mathsf{Gal} }}
\def \Burn{{    \mathsf{Burn}}}
\def \CH {{     \mathsf{CH}  }}
\def \K{{       \mathsf{K}  }}
\def \Chow{{    \mathsf{Chow_{\Q}}}}
\def \ChowZero{{    \mathsf{Chow_{\Q}^{0}}}}
\def \ChowArt{{    \mathsf{Chow_{\Q}^{Art}}}}
\def \Kthree{ S_{\mathrm{K3}} } 
\def \Mot{ { \mathsf{Mot} }}
\def \Blow{ { \mathsf{Blow} }}
\newcommand{\xdasharrow}[2][->]{
\tikz[baseline=-\the\dimexpr\fontdimen22\textfont2\relax]{
\node[anchor=south,font=\scriptsize, inner ysep=1.5pt,outer xsep=2.2pt](x){#2};
\draw[shorten <=3.4pt,shorten >=3.4pt,dashed,#1](x.south west)--(x.south east);
}
}
\def \W{ 	\mathbb{W}}	
\def \HY{ 	[S]} 		
\def \L{ 	\mathbb L}	
\def \One{ 	1 } 	 	
\def \X_#1 { 	\chi_{#1} } 
\def \KGro{ 	\mathsf{K}_0[\mathsf{Var}_{k}] } 	
\def \KCat{ 	\mathsf{K}_0[\mathsf{dg}\text{-}\mathsf{cat}_{k}] } 	
\def \Prop{     \mathsf{(P)}  }
\def \muCat{    \mu_{cat}   }
\def \muSB{     \mu_{sb}    }
\def \muGS{     \mu_{mot}   }
\def \YZYequation{
                \L^4[Z(Y)] &= [Y^{(4)}] - (1 + \L^2 + \L^4)[Y^{(3)}] + \L^2[Y][Y^{(2)}] - (\L^2 + \L^6)[Y^2] \\
                           &+ \L^4[Y^{(2)}] - (\L^{2} + \L^6 + \L^{10})[Y] \nonumber}
\numberwithin{equation}{section}  
\numberwithin{theorem}{section}
\numberwithin{remark}{section}
\numberwithin{example}{section}
\numberwithin{proposition}{section}
\numberwithin{lemma}{section}
\numberwithin{definition}{section}
\numberwithin{assumption}{section}
\numberwithin{question}{section}
\numberwithin{corollary}{section}
\numberwithin{conjecture}{section}
\title{Twisted cubics and quadruples of points on cubic surfaces}
\author{Pavel Popov}
\date{\today}
\begin{document}

\begin{abstract}
        We study relations in the Grothendieck ring of varieties which connect the Hilbert scheme of points on a cubic hypersurface $Y$ with a certain moduli space of twisted cubic curves on $Y$.
        These relations are generalizations of the ``beautiful'' $Y$-$F(Y)$ relation by Galkin and Shinder which connects $Y$ with the Hilbert scheme of two points on $Y$ and the Fano variety~$F(Y)$ of lines on $Y$.
        We concentrate mostly on the case of cubic surfaces. 
        The symmetries of $27$ lines on a smooth cubic surface give a lot of restrictions on possible forms of the relations.
\end{abstract}
\maketitle

\setcounter{secnumdepth}{3}
\tableofcontents

\section{Introduction} \label{Introduction}
\addtocontents{toc}{\protect\setcounter{tocdepth}{1}}


There are different kinds of geometric objects associated to a given algebraic variety.
Some of these objects come in families.
A certain moduli spaces which parameterise such objects again have an algebro-geometric structure.
They help to shed light on the hidden properties of the algebraic variety.
Different moduli spaces associated with one algebraic variety are closely related.
In this work, we will consider cubic hypersurfaces.
The most simple geometric objects on them are configurations of points and rational curves.
We will explore some connections between cubic hypersurfaces and moduli spaces of such objects on them.

\subsection{Lines on cubic hypersurfaces}
Consider a cubic hypersurface $Y\subset \P^{d+1}$ over a field $k$. 
Let $F(Y)$ denote the reduced Fano scheme of lines on $Y$.
For smooth $Y$ the Fano scheme $F(Y)$ is a smooth projective variety of dimension $2d-4$, \cite{AK77}. 
\begin{question}
How does the geometry of $Y$ and $F(Y)$ relate?
\end{question}

The following answer is given in \cite{GS14}.
Passing a line through two generic points on $Y$ one can obtain the third point on $Y$. 
In other words, there is a rational map from the Hilbert scheme $Y^{[2]}$ of two points to $Y$.
This map is a composition of two.
The first one is a birational isomorphism $\phi$ between $Y^{[2]}$ and the variety of pairs 
$$W=\{(l,p) | \text{ a line } l\subset \P^{d+1}, \text{ a point } p\in l\cap Y\}.$$ 
It sends a pair of points to the line passing through them and the third point of intersection with $Y$. 
The other is a Zariski locally trivial fibration from $W$ to $Y$ with fiber $\P^d$.
It sends a pair $(l, p)\in W$ to the point $p$.
\begin{equation}\label{eq:phi}
\begin{gathered}
  \begin{tikzpicture}
    \matrix (m) [matrix of math nodes, row sep=1em,
column sep=1em]
        {    &  W  & \\
        Y^{[2]} && Y \\ };
	\path[<->, dashed]    (m-2-1) edge node[auto]{$\phi$} (m-1-2);
  \path[->]             (m-1-2) edge (m-2-3);
  \path[->, dashed]     (m-2-1) edge (m-2-3);
\end{tikzpicture}
\end{gathered}
\end{equation}

The \emph{Grothendieck ring} $\KGro$ of $k$-varieties as an abelian group is generated by the isomorphism classes of reduced, quasi-projective $k$-schemes. The relations are
$$[X] = [Y] + [X\setminus Y],$$
whenever $Y$ is a closed subscheme of $X$. See Section \ref{KGro} for details.

A careful analysis of the locus of indeterminacy of birational isomorphism $\phi$ gives the following ``beautiful'' $Y$-$F(Y)$ relation \cite{GS14} in the Grothendieck ring of varieties $\KGro$:
\begin{equation}\label{The Y-F(Y) relation}
  [Y^{[2]}] = [\P^d][Y]+\L^2[F(Y)],
\end{equation}
where $\L:=[\A^1]\in \KGro$ is the Lefschetz class.
This birational isomorphism was also studied in \cite{Voi15} and used to show that $\CH_0(Y)$ is universally trivial for certain smooth cubic hypersurfaces of dimensions $3$ and $4$.
 
The relation \eqref{The Y-F(Y) relation} is beautiful since, first of all, it has the same elegant form for any cubic hypersurface over any field $k$. Secondly, it is a relation in any \emph{realization} of the Grothendieck ring of varieties $\KGro$. While the ring $\KGro$ is too complicated one may consider a realization homomorphism (i.e. ring homomorphism) to another ring $R$. 
There are realizations in the classes of stable birational equivalence, in the Grothendieck ring of categories, in the Grothendieck ring of rational Chow motives etc. In some literature realization homomorphisms called \emph{motivic measures}. See Section~\ref{KGro} for details.

Different kinds of invariants of the Fano variety $F(Y)$ may be computed using the $Y$-$F(Y)$ relation (\ref{The Y-F(Y) relation}). 
For example, one can easily compute the number of lines on real and complex smooth or singular cubic surfaces. 
On the other hand, one can compute the Hodge structure of $H^*(F(Y),\Q)$ for a smooth complex cubic $Y$ of an arbitrary dimension $d$. 
In \cite{DLR17} this relation is used to calculate zeta functions of the Fano variety $F(Y)$ for smooth cubic threefolds and fourfolds and answer some questions about lines on cubic hypersurfaces over finite fields.  

\begin{remark}
        Let $k$ be a field of characteristic $0$. 
	Denote by $\Chow$ the Grothendieck's category of Chow $k$-motives, \cite{Sch94}.
        The \emph{Gillet--Soul\'e motivic realization} homomorphism
        $$\muGS\colon\KGro \rightarrow \K_0(\Chow)$$
	sends the class $[X_k]$ of a smooth projective variety to the class $[\h(X_k)]$ of its motive.
	The~$Y\text{-}F(Y)$ relation (\ref{The Y-F(Y) relation}) implies a relation in $\K_0(\Chow)$.
        In \cite{La16} it was shown that the $Y$-$F(Y)$ relation lifts to an equivalence in the category of Chow motives:
	$$\h(Y^{[2]}) \cong \h(F(Y))(2)\oplus \overset{d}{\underset{i=0}{\bigoplus}} \h(Y)(i).$$
\end{remark}

The $Y$-$F(Y)$ relation also implies a relation on the level of categories. 
More precisely, the \emph{categorical realization} homomorphism
$$\muCat\colon \KGro \rightarrow \KCat$$
sends the class $[X_k]$ of a smooth projective variety to the class $[\D(X_k)]$ of the bounded derived category of coherent sheaves on $X_k$ in the Grothendieck ring of pre-triangulated dg-categories,~\cite{BLL04}.
The relations in this ring come from semi-orthogonal decompositions.
We recall the details in Section~\ref{KGro}.

In the case of smooth cubic fourfolds the class $[\D(Y)]$ is equal to $[\CA_Y] + 3$, where $\CA_Y$ is a certain subcategory in $\D(Y)$.
By many features the category $\CA_Y$ looks like the derived category of a $K3$-surface, \cite{Kuz08}. The $Y$-$F(Y)$ relation allows to express $[\D(F(Y))]$ through the class~$[\CA_Y]$:
$$[\D(F(Y))] = [\Sym^2_{GK} \CA_Y],$$
where $\Sym^n_{GK}\CC$ are symmetric power operations in the category $\KCat$, \cite{GK14}.
For certain special cubic fourfolds this relation lifts to an equivalence of categories. 
This follows from the isomorphism $F(Y) \cong \Hilb^2(S)$, \cite{BD85}, where $S$ is a $K3$-surface.
Sergey Galkin conjectured an equivalence $\D(F(Y))\cong\Sym^2_{GK}\CA_Y$ for all smooth cubic fourfolds.
We discuss this in detail in Section~\ref{Speculations for cubic fourfolds}.


\subsection{Configurations of points on cubic hypersurfaces}
The $Y$-$F(Y)$ relation (\ref{The Y-F(Y) relation}) uses the Hilbert scheme of two points on $Y$ to give a connection between $Y$ and the Fano variety~$F(Y)$ of lines on $Y$.
It also can be rewritten in the form (\ref{The Y-F(Y) relation Sym}) (see Example \ref{ex:yfy} below) using the symmetric square $Y^{(2)}$ of $Y$ instead of the Hilbert scheme.
In this paper we consider both $Y^{[n]}$ and $Y^{(n)}$ as ``moduli spaces of configurations of points on $Y$''.
Some times we will also consider the variety $Y^{\{n\}}\subset Y^{(n)}$ of unordered configurations of $n$ different points.

The symmetric powers $Y^{(n)}$ are related to $K$-points on $Y$ for finite field extensions $K/k$. 
The basic question on $K$-points is about their existence on $Y$.
The existence of a $k$-point is a stable birational invariant.
According to \cite{LL03} the quotient ring $\KGro/(\L)$ as an abelian group is freely generated by classes of stable birational equivalence and any relation $[X]\equiv [Y]~(mod~\L)$ implies a stable birational equivalence of smooth projective connected varieties $X$ and $Y$. 
In particular the $Y$-$F(Y)$ relation (\ref{The Y-F(Y) relation}) implies that $Y$ and $Y^{[2]}$ are stably birationally equivalent. 
This also follows from the diagram \eqref{eq:phi}.
If on a smooth cubic $Y$ there is a point defined over a quadratic field extension then there is a point defined over the ground field.
We discuss this in details in Section \ref{SB obstruction}. See also Section \ref{KGro} for the full form of the result from \cite{LL03}.

In May of 2015 J\'anos Koll\'ar asked Sergey Galkin the following question:
\begin{question}\label{Q4}(\emph{J\'anos Koll\'ar})
Are there other "beautiful" formulae which express higher Hilbert schemes of points on a cubic hypersurface $Y$ (or symmetric powers of $Y$) through lower Hilbert schemes of points and something else? Is there some formula which connects the fourth Hilbert scheme $Y^{[4]}$ and $Y$ in the Grothendieck ring of varieties?
\end{question}

The positive answer to this question may help to solve the following conjecture by Cassels and Swinnerton-Dyer stated in the 1970s, \cite{Cor76a}:

\begin{conjecture}\label{CSDconj}
    Let $Y$ be a cubic hypersurface over a field $k$. 
    Suppose $Y$ has a point defined over an extension field $K/k$ of degree $n$ prime to $3$. 
    Then $Y$ also has a point over the ground field $k$.
\end{conjecture}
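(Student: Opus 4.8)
We do not prove Conjecture~\ref{CSDconj}; instead we sketch how it would follow from relations of the type studied in this paper. It is convenient to reformulate the statement. A closed point of degree $n$ is in particular an effective zero-cycle of degree $n$, and conversely an effective zero-cycle of degree prime to $3$ must involve a closed point of degree prime to $3$; hence the conjecture is equivalent to the assertion that if $Y$ carries an effective zero-cycle of degree prime to $3$, then $Y(k)\neq\emptyset$. We argue by induction on the least degree $n$, coprime to $3$, of a closed point of $Y$, the aim being to force $n\le 2$. The case $n=1$ is vacuous. For $n=2$, the line spanned by a separable conjugate pair is defined over $k$; it either lies on $Y$ or meets $Y$ in a residual $k$-point, so $Y(k)\neq\emptyset$ --- this is the $n=2$ content of the $Y$--$F(Y)$ relation \eqref{The Y-F(Y) relation} read modulo $\L$, cf.\ \cite{GS14}.

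For the inductive step the plan is to establish, for each $n$ coprime to $3$, a relation in $\KGro$ of the form
\begin{equation*}
  [Y^{[n]}] \;=\; \sum_{j<n} P_j(\L)\,[Y^{[j]}] \;+\; \L\cdot\big(\text{a $\Z[\L]$-combination of classes of moduli of rational curves on }Y\big),\qquad P_j\in\Z[\L],
\end{equation*}
a higher analogue of \eqref{The Y-F(Y) relation} obtained, as in diagram \eqref{eq:phi}, by resolving the indeterminacy of the rational map that sends a general length-$n$ subscheme of $Y$ to a rational curve on $Y$ of controlled degree through it, together with the residual subscheme --- the case $n=4$, with twisted cubics in place of lines, being the main subject of this paper. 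Reducing such a relation modulo $\L$ kills the curve-moduli term, so by the theorem of Larsen--Lunts \cite{LL03} one obtains in $\KGro/(\L)$, which is free abelian on stable-birational classes of smooth projective varieties, the identity $[Y^{[n]}]=\sum_{j<n}P_j(0)\,[Y^{[j]}]$. As $Y^{[n]}$ is irreducible --- and, for $Y$ a smooth surface, smooth projective --- its class is a single basis vector; since this vector must occur with coefficient $1$ on the right-hand side, $Y^{[n]}$ is stably birationally equivalent to $Y^{[j]}$ for some $j<n$ with $P_j(0)\neq0$.

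Now let $Y$ be a smooth cubic surface possessing a closed point of degree $n$ whose residue field is separable over $k$ (automatic in characteristic $0$). The associated reduced length-$n$ subscheme is a $k$-point of the smooth projective variety $Y^{[n]}$, so $Y^{[n]}(k)\neq\emptyset$. Stable birational equivalence of smooth projective $k$-varieties preserves the existence of a $k$-point --- apply the Lang--Nishimura lemma to both directions of a birational map $Y^{[n]}\times\P^N\dashrightarrow Y^{[j]}\times\P^M$ --- so $Y^{[j]}(k)\neq\emptyset$. A $k$-point of $Y^{[j]}$ is a length-$j$ subscheme of $Y$ over $k$, whose associated zero-cycle has degree $j$; if $j$ is coprime to $3$ this zero-cycle involves a closed point of degree at most $j<n$ coprime to $3$, contradicting the minimality of $n$. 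Thus $n\le 2$ and $Y(k)\neq\emptyset$. (A curve-moduli term surviving modulo $\L$ should be harmless too: a $k$-point of such a moduli space is a rational curve of low degree on $Y$ defined over $k$ --- a line or a twisted cubic --- which carries $k$-rational divisor classes of coprime degrees, hence, by Riemann--Roch on the curve, a $k$-point lying on $Y$.)

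The first step is the crux and, I expect, the main obstacle: one needs, for every $n$ coprime to $3$, a relation whose reduction modulo $\L$ is useful for the induction, that is, with $P_j(0)\neq0$ for some $j<n$ \emph{coprime to $3$} rather than only for multiples of $3$. The twisted-cubic relation for $[Y^{[4]}]$ proved here gives only $[Y^{[4]}]\equiv[Y^{[3]}]\pmod\L$; since $Y^{[3]}$ always has a $k$-point (intersect $Y$ with a $k$-rational line in $\P^3$), this relation by itself yields no information about $Y(k)$. Reaching $[Y]$ or $[Y^{[2]}]$ instead requires finer geometry, and for cubic surfaces this is exactly where the Weyl-group action on the $27$ lines enters, sharply restricting the admissible shapes of such relations. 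Whether ``good'' relations of this kind exist for all $n$ coprime to $3$, or whether those one can actually prove are always ``inert'' in the above sense, is open; a negative answer would mean that this circle of ideas does not by itself settle Conjecture~\ref{CSDconj}.
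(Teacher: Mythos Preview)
You correctly recognise that Conjecture~\ref{CSDconj} is stated as an open conjecture in the paper, not as a theorem, and you accordingly decline to prove it. Your heuristic sketch of how a family of ``beautiful'' relations in $\KGro$ would, via the Larsen--Lunts theorem, yield the conjecture is exactly the motivation the paper records: this is the content of Question~\ref{Q4} and the surrounding discussion, and your treatment of the case $n=2$ matches the paper's.

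One point deserves sharpening. You write that the degree-$4$ relation ``proved here'' gives $[Y^{[4]}]\equiv[Y^{[3]}]\pmod\L$ and is therefore inert for the induction. The paper's result is in fact stronger and cuts the other way: the $S$--$Z(S)$ relation~\eqref{S-Z(S) formula} is established only in $\K_0(\Chow)$ (Theorem~\ref{thm:main}), and Theorem~\ref{SBobsth} together with its Corollary shows that it \emph{cannot} hold in $\KGro$, precisely because the reduction $[S^{(4)}]\equiv[S^{(3)}]\pmod\L$ would force a stable birational equivalence that fails for the cubic surface of Lemma~\ref{SBnonequiv}. So the obstruction you identify is not merely that the known relation is unhelpful modulo~$\L$; it is that any genuine $\KGro$-relation of this shape is ruled out by the stable-birational realisation. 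Your final paragraph anticipates this, but you should make explicit that the paper \emph{proves} there is no beautiful formula of degree~$4$ with the desired mod-$\L$ behaviour, rather than leaving it as a possibility.
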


The case of $n=2$ follows from the $Y$-$F(Y)$ relation (\ref{The Y-F(Y) relation}) and the case of $n=4$ is an open question. 
Daniel Coray, \cite{Cor76a}, \cite{Cor76b}, shows that this conjecture is true in the case of cubic surfaces over local fields and in the case of singular cubic surfaces. 
Also he shows that if a cubic surface has a point over a field extension $K/k$ of degree prime to $3$ then it has a point over a field of degree $1,4$ or $10$.
Thereby Coray already reduced the general conjecture to the cases of $n=4$ and $n=10$. 
In this work we will concentrate on the case of $n=4$.

In order to study some questions about all the symmetric powers of $X$ together one can organise them in a power series, so called \emph{Kapranov's zeta function}, in some literature it is called ``motivic zeta function'',
$$ Z_{Kap}(X, t) = \sum_{n=0}^{\infty} X^{(n)}t^n \in \KGro[[t]]. $$

If $k$ is a finite field, $\mu\colon [X]\mapsto |X(k)|$ defines a realization $\mu\colon\KGro\rightarrow \Z$, where $|X(k)|$ denotes the number of $k$-points on $X$. 
The image under $\mu$ of the Kapranov's zeta function is the Hasse-Weil zeta function which is rational as a function of $t$ by Dwork's theorem.
In~\cite{Kap00} Kapranov asked whether the rationality holds for $Z_{Kap}(X,t)$.
He proved rationality in the case of curves.
In \cite{LL03} the authors prove that in general Kapranov's zeta function is not rational for surfaces.
See \cite{LL18} for a summary and recent results on the rationality of Kapranov's zeta function.
The case of a non-rational cubic surface is an open question.

\subsection{Beautiful formulae}\label{sec:beau}
Motivated by $Y$-$F(Y)$ relation (\ref{The Y-F(Y) relation}) we are going to search for connections between $Y$ and higher symmetric powers $Y^{(n)}$ of $Y$ in the Grothendieck ring of varieties $\KGro$.
Let us introduce some definitions before stating the precise form of Question~\ref{Q4} that we are going to answer. 

Consider a polynomial expression $P$ with formal symbols $[Y]=[Y^{(1)}],~[Y^{(n)}],~[X_i(Y)],~\L$.
We will call such an expression \emph{a formula with varieties $X_i(Y)$} if it is linear in variables $[X_i(Y)]$.
We assign degree to a polynomial expression $P$ by assigning degree $1$ to $[Y]$, $n$ to $[Y^{(n)}]$ and degree $0$ to other variables.
We say that $P$ is \emph{a formula of degree $n$} if $P$ has degree $n$, i.e. it consists of monomials of degree $n$ and less. 

Let $\Prop$ be some property: \emph{any}, \emph{smooth}, \emph{singular}, \emph{singular of some type}, \emph{of dimension $n$} etc. 
Let $X_1(Y),\dots, X_m(Y)$ be some varieties constructed naturally for any cubic hypersurface $Y$ with property $\Prop$ over any field $k$. 
We can evaluate \emph{a formula $P$ with varieties~$X_1(Y),\dots, X_m(Y)$} in the Grothendieck ring of varieties $\KGro$ by sending $[Y]$ to the class of $Y$, $[Y^{(n)}]$ to the classes of symmetric powers $\Sym^n(Y)$, $\L$ to the class of an affine line $\A^1$ and $[X_i(Y)]$ to the class of the variety $X_i(Y)$.
Moreover for any realization $\mu\colon\KGro\rightarrow R$, we can evaluate a formula $P$ in the ring $R$.
We will say that $P$ is \emph{a formula in the ring $R$ for cubic hypersurfaces $Y$ of type $\Prop$ with varieties $X_1(Y),\dots, X_m(Y)$} if the $\mu$ realization of $P$ is zero for any cubic surface $Y$ with property $\Prop$ over any field $k$ (maybe except some fields of small characteristic).
We will call a formula $P$ \emph{homogeneous} if there are no additional varieties~$X_i(Y)$.

\begin{definition}\label{def:beautiful}
  We will say that a formula $P$ for cubic hypersurfaces $Y$ of type $\Prop$ is \emph{beautiful} if it is a formula in the Grothendieck ring of varieties $\KGro$.
	In other words $P$ is a formula in the ring $R$ for all realizations $\mu:\KGro\rightarrow R$.
	This means that $P$ is zero after evaluation in~$\KGro$ for all cubic hypersurfaces $Y$ with property $\Prop$ over any field $k$ (maybe except some fields of small characteristic).

\end{definition}

\begin{example}\label{ex:yfy}
        For a smooth cubic hypersurface the $Y$-$F(Y)$ relation (\ref{The Y-F(Y) relation}) may be rewritten in the following form, \cite{GS14}:
        \begin{equation}\label{The Y-F(Y) relation Sym}
	  [Y^{(2)}] = (1+\L^d)[Y] + \L^2[F(Y)].
        \end{equation}
        This is an example of a \emph{beautiful formula of degree $2$ for smooth cubic hypersurfaces $Y$ with the Fano variety $F(Y)$}.
\end{example}

\begin{remark}
        Sometimes we will specify that the above-defined formulae are \emph{in $\Sym$-form}. 
        We make a parallel definition for formulae \emph{in $\Hilb$-form} by using Hilbert schemes $Y^{[n]}$ instead of $Y^{(n)}$.
        In the case of smooth surfaces $S$ any formula in $\Sym$-form imply some formula in~$\Hilb$-form and vice versa. 
        More precisely, for $\mathsf{char}~k>n$ subrings in $\KGro$ generated by $\L,[S^{(1)}],\dots,[S^{(n)}]$ and by $\L,[S^{[1]}],\dots,[S^{[n]}]$ coincide thanks to the G\"ottsche formula (\ref{Gottsche-formula}). 
\end{remark}

\begin{remark}
        A beautiful formula is still a beautiful formula after multiplication by some class~$[X]$. 
        However, it may become less powerful.
        For example, after multiplication by $\L$, we can not derive any information about stable birational geometry.
\end{remark}

\begin{question}\label{QPrecise}
        Is there a \emph{beautiful formula of degree $4$ for cubic hypersurfaces $Y$ with some varieties $X_i(Y)$} ? 
\end{question}

The Fano variety $F(Y)$ is not enough to obtain any relations due to the following result which we will show in Section \ref{The S-Z(S) relation}.
\begin{restatable}{theorem}{Fthreefourthm}\label{thm:F34}
There are no beautiful formulae of degree $3$ or $4$ for smooth cubic surfaces $S$ with the Fano variety $F(S)$.
\end{restatable}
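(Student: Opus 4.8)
\emph{Proof strategy.} Write $R:=[S^{(2)}]-(1+\L^2)[S]-\L^2[F(S)]$ for the $Y$-$F(Y)$ relation \eqref{The Y-F(Y) relation Sym}, which is beautiful by \cite{GS14}. The assertion to be proved is that there is nothing beyond its obvious consequences: every beautiful formula of degree $\le4$ linear in $[F(S)]$ is a $\Q[\L]$-linear combination of $R,\ [S]R,\ [S]^2R,\ [S^{(2)}]R$ (the only products of $R$ with monomials of degree $\le2$), and in particular no beautiful formula genuinely involving $[S^{(3)}]$ or $[S^{(4)}]$ exists. By the remark comparing $\Sym$- and $\Hilb$-form we work with symmetric powers, and we assume the characteristic is $0$ or large. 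Write the candidate as $P=\sum_M c_M(\L)\,M$ over the finitely many monomials $M$ of degree $\le4$ in $[S],[S^{(2)}],[S^{(3)}],[S^{(4)}]$, each carrying at most one factor $[F(S)]$, with $c_M\in\Q[\L]$. The crucial feature is that over a non-closed field $[F(S)]$ genuinely varies, governed by the monodromy/Galois action on the $27$ lines, i.e.\ by the Weyl group $W(E_6)$; one tests $P$ against the realizations that detect this.

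\emph{The inputs.} Use point counting $\mu_q\colon[X]\mapsto|X(\F_q)|$. A smooth cubic surface $S/\F_q$ has a Frobenius conjugacy class $w\in W(E_6)$; set $a_r:=\mathsf{tr}(w^r\mid E_6\otimes\Q)$. Then $|S(\F_{q^r})|=q^{2r}+(1+a_r)q^r+1$, so from $\sum_n|S^{(n)}(\F_q)|\,t^n=\exp\!\big(\sum_{r\ge1}|S(\F_{q^r})|\,t^r/r\big)$ one obtains
\[
\sum_{n\ge0}|S^{(n)}(\F_q)|\,t^n=\Big((1-t)(1-qt)(1-q^2t)\prod_{j=1}^{6}(1-q\beta_j t)\Big)^{-1},\qquad \sum_j\beta_j^r=a_r,
\]
whence $|S^{(n)}(\F_q)|$ for $n\le4$ is an explicit polynomial in $q$ and $a_1,\dots,a_n$, with $a_n$ genuinely present. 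On the other hand $|F(S)(\F_q)|=\tfrac12\big((1+a_1)^2+a_2-1\big)$ — equivalently $[F(S)]$ corresponds to the permutation representation $\mathbf1\oplus\rho\oplus\mathsf{Sym}^2_0\rho$ of $W(E_6)$ on the $27$ lines, which depends only on $a_1,a_2$; this is exactly $R$ read through $\mu_q$. Hence the $[F(S)]$-part of $P$ never carries $a_3$ or $a_4$: among the monomials of $P$, the variable $a_4$ occurs only in $[S^{(4)}]$ and $[S^{(4)}][F(S)]$, and $a_3$ only in those together with $[S^{(3)}],[S][S^{(3)}],[S^{(3)}][F(S)],[S][S^{(3)}][F(S)]$.

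\emph{The argument.} Every conjugacy class of $W(E_6)$ arises as the Frobenius class of a smooth cubic surface over $\F_q$ for infinitely many $q$, so $\mu_q(P)=0$ holds identically in $q$ for each of the $25$ classes $w$; separating the coefficients of powers of $q$ turns this into a (finite) system of linear equations on the $c_M$. Comparing the realizations over classes that agree on $(a_1,a_2)$ but differ on $(a_3,a_4)$ — for instance $2A_2$ and $A_5$, which both have $(a_1,a_2)=(0,0)$ while $a_3=6$ resp.\ $a_3=0$, together with the remaining classes — makes the $[F(S)]$-contributions cancel and forces the coefficients of all six monomials containing $[S^{(3)}]$ or $[S^{(4)}]$ to vanish. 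What is left is a degree-$\le4$ formula $P$ in $[S],[S^{(2)}],[F(S)]$ alone; for these monomials $\mu_q$ lands in $\Q(q)[u,v]$ with $u=\mu_q([S])$, $v=\mu_q([S^{(2)}])$, because $\mu_q([F(S)])=\big(v-(1+q^2)u\big)/q^2$ (again $R$), and since distinct monomials in $u,v$ are linearly independent one reads off directly that the space of relations among the surviving $18$ monomials is spanned by the images of $R,[S]R,[S]^2R,[S^{(2)}]R$. Thus $P$ lies in their $\Q[\L]$-span, as claimed.

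\emph{The main obstacle.} The whole statement is reduced to a finite question about $W(E_6)$, and the one genuinely computational point is to check that its $25$ conjugacy classes separate the relevant low-degree invariants finely enough to force the six $[S^{(3)}]$/$[S^{(4)}]$-coefficients to vanish — equivalently, that there is no accidental linear relation, with $\L$-coefficients, among the $W(E_6)$-representations $\mathsf{Sym}^{\le4}\rho$ and their twists by $\L$ and $[F(S)]$ beyond the one already encoded in $R$; the plethysm decompositions entering this are most cleanly handled with the character table of $W(E_6)$. To make the deduction airtight without invoking density of Frobenii, one may run the same comparison in the Grothendieck ring of Chow motives (or Galois modules) of the universal cubic surface, where $\L$ and the Artin motive $\mathbf1\oplus\rho\oplus\mathsf{Sym}^2_0\rho$ of the $27$ lines are independent enough for direct coefficient comparison. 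Finally, one must rule out that the $c_M$ have large $\L$-degree; this follows by comparing top degrees in $q$, since $\mu_q(M)\sim q^{2\deg M}$, which pushes the vanishing down from the leading coefficients.
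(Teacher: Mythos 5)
Your overall strategy---force a candidate formula to vanish under realizations indexed by the conjugacy classes of $W(E_6)$ and reduce the statement to finite linear algebra over those $25$ classes---is in substance the route the paper takes in Section~\ref{The S-Z(S) relation}: there the Gillet--Soul\'e realization is used, so that for a cubic surface whose Galois action on the $27$ lines is the full Weyl group the classes $[S^{(n)}]$ and $[F(S)]$ become graded virtual $W(E_6)$-representations, and the non-existence is a character-table computation; your point counts over $\F_q$ compute exactly those characters. The genuine gap is that the decisive step is never carried out, and the shortcut you offer in its place does not work. That the comparisons force the six coefficients of the $[S^{(3)}]$/$[S^{(4)}]$-monomials to vanish, and that the residual relation space is spanned by $R,[S]R,[S]^2R,[S^{(2)}]R$, \emph{is} the theorem, and you defer it as ``the one genuinely computational point''. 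Moreover the justification for the last step is invalid: after substituting $\mu_q([F(S)])=(v-(1+q^2)u)/q^2$ you argue that ``distinct monomials in $u,v$ are linearly independent'', treating $u=\mu_q([S])$ and $v=\mu_q([S^{(2)}])$ as independent indeterminates. They are not: the realizable triples $(q,u,v)$ lie on finitely many curves, one for each pair $(a_1,a_2)$ attained on the $25$ classes (there are only $19$ such pairs), so linear independence of abstract monomials does not transfer to their restrictions; one must actually compute the rank of the $18$ monomials as $\Q[\L]$-valued functions on the classes, which is precisely the character-table linear algebra the paper performs. The same caveat applies to the first half: the comparisons of classes with equal $(a_1,a_2)$ give finitely many polynomial identities in $q$ for six unknown polynomial coefficients, and that these force vanishing is again a rank assertion to be verified (your examples are consistent---e.g.\ besides $2A_2$ and $A_5$ there is an order-$9$ class with $(a_1,a_2)=(0,0)$ and $a_3=-3$, and a pair of classes of orders $4$ and $8$ with equal $(a_1,a_2,a_3)$ and $a_4=6$ resp.\ $-2$---but the check is not done).

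A second unproved input is your claim that every conjugacy class of $W(E_6)$ arises as the Frobenius class of a smooth cubic surface over $\F_q$ for infinitely many $q$. This is true but not formal: it requires either the known finite-field realization results or a surface over a number field with Galois image all of $W(E_6)$ together with spreading out and Chebotarev, and it is essential for your finite-field route. The paper avoids it by working in characteristic zero with a single surface carrying the full $W(E_6)$-action, which is all the motivic/Artin-motive argument needs; note also that the Gillet--Soul\'e realization exists only in characteristic zero, so your positive-characteristic point counts are not literally the ``same comparison in the Grothendieck ring of Chow motives'' that you invoke as a fallback. In short: the plan is viable and parallel to the paper's, but as written it is an outline---the finite verification that constitutes the theorem, and the realizability input, both remain to be supplied.
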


\subsection{Twisted cubics on cubic hypersurfaces}
Motivated by the Fano variety of lines one may consider moduli spaces of smooth rational curves of arbitrary degree $d$ on $Y$. 
For $d\geqslant 2$ these spaces are no longer compact. 
To be specific we let $M_d(Y)$ denote the compactification in the Hilbert scheme $\Hilb^{dn+1}(Y)$, where $dn+1$ is the Hilbert polynomial in variable $n$. 

The space $M_2(Y)$ does not provide anything new in comparison with $M_1(Y)=F(Y)$. 
Indeed, any rational curve $C$ of degree $2$ on $Y$ spans a two-dimensional linear subspace $\P^2\subset \P^{d+1}$ which, in turn, cuts out a plane curve of degree $3$ from $Y$.
As this curve contains $C$, it must have a line~$L$ as a residual component.
Mapping $[C]$ to $[L]$ defines a natural morphism $M_2(Y) \rightarrow M_1(Y)$.
The fiber is the space of planes in $\P^{d+1}$ which contain the line.

The geometry of $M_3(Y)$ is much more interesting.
Let $Y\subset \P^5$ be a smooth cubic hypersurface that does not contain a plane. 
In \cite{LLSvS} the authors show that the moduli space~$M_3(Y)$ of generalized twisted cubic curves on $Y$ is a smooth and irreducible projective variety of dimension~$10$.
Any twisted cubic $C$ on $Y$ spans a three-dimensional linear subspace~$\P^3$ which in turn cuts out a cubic surface $S$. 
Twisted cubics on a smooth cubic surface come in $72$ two-dimensional families.
These families become $\P^2$-families after the compactification by generalized twisted cubics.
In case of cubic surfaces with at most rational double point singularities generalized twisted cubics also form $\P^2$-families, \cite{LLSvS}.  
The intersection $S=\P^3\cap Y$ can have more complicated singularities and generalized twisted cubics on such $S$ no longer form two-dimensional families.
Nevertheless, it is possible to make a contraction of the whole moduli space $M_3(Y)$ with the fiber $\P^2$.

In \cite{LLSvS} the authors construct a two-step contraction $M_3(Y)\overset{a}{\rightarrow}Z'(Y)\overset{\sigma}{\rightarrow} Z(Y)$. 
Where~$a$ is a $\P^2$-fiber bundle and $\sigma$ is the blow-up of $Z(Y)$ along $Y$. 
When $k=\C$ the variety $Z(Y)$ is a smooth eight-dimensional holomorphically symplectic manifold. 
We will refer to it as the \emph{LLSvS variety} $Z(Y)$ and morally understand it as a ``moduli space of $\P^2$-families of generalized twisted cubics on $Y$''.
In the case of a cubic surface $S$ with at most rational double singularities we will denote by $Z(S)$ the variety of $\P^2$-families of generalized twisted cubics on $S$ and also refer to it as the \emph{LLSvS variety} $Z(S)$. 
We recall results on $Z(Y)$ and~$Z(S)$ in Section~\ref{LLSvS}.

In the Grothendieck ring of varieties $[M_3(Y)]$ can be expressed through $[Y]$, $[Z(Y)]$ and $\L$. 
$$[M_3(Y)] = [\P^2]([Z(Y)] - [Y] + \P^3[Y])$$
That is why any beautiful formula with $M_3(Y)$ implies a beautiful formula with the LLSvS variety $Z(Y)$. Further we will use the LLSvS variety $Z(Y)$ when we talk about moduli spaces of twisted cubics on $Y$. It the case of cubic surfaces $S$ with at most rational double point singularities by \cite[Theorem~2.1]{LLSvS} (see Theorem~\ref{gtc on sing surf} below) the moduli space of generalized twisted cubic curves on $S$ is~$Z(S)\times\P^2$.  

In the case of smooth cubic fourfolds, there is a similar connection between the Hilbert scheme of four points and $Z(Y)$ as between the second Hilbert scheme and $F(Y)$.
More precisely for certain smooth cubic fourfolds $Y$ without a plane the LLSvS variety $Z(Y)$ is deformation equivalent to $\Hilb^4(S)$ for some $K3$ surface.
Sergey Galkin conjectured that $\D(Z(Y))\cong\Sym^4_{GK} \CA_Y$. We discuss this in Section \ref{Speculations for cubic fourfolds}.
\begin{question}(\emph{Main question})
        Is there a \emph{beautiful formula of degree $4$ for cubic hypersurfaces $Y$ with the LLSvS variety $Z(Y)$}?
\end{question}

\subsection{Results}
In order to investigate the existence of beautiful formulae, we will use three different realizations from the Grothendieck ring of varieties $\KGro$ to other rings. 
\begin{itemize}
    
  \item The \emph{Gillet-Soul\'e motivic realization} $\muGS$ in the Grothendieck ring $\K_0(\Chow)$ of the category of Chow $k$-motives. 
  \item The \emph{Larsen-Lunts stable birational realization} $\muSB$ in the ring $\Z[SB_k]$ generated by classes of stable birational equivalence of smooth connected projective varieties.
  \item The \emph{Bondal-Larsen-Lunts categorical realization} $\muCat$ in the ring $\KCat$ generated by quasi-equivalence classes of pre-triangulated dg-categories modulo some relations coming from semi-orthogonal decompositions.
\end{itemize}
See Section \ref{KGro} for details.

In Section \ref{The S-Z(S) relation} we restrict ourselves to the case of cubic surfaces and use the Gillet-Soul\'e motivic realization.
Any twisted cubic or a generic quadruple of points on a cubic hypersurface~$Y$ spans~$\P^3\subset \P^{d+1}$ which in turn cuts out a cubic surface $S$. 
So one may hope to generalize a beautiful formula for $S$ of degree $4$ with the LLSvS variety $Z(S)$ to higher dimensions.
The case of five points on a surface is already too degenerate.    


The geometric N\'eron-Severi group $N(S):=\Pic(S\otimes \bar{k})$ of a smooth cubic surface $S$ has a big group of isometries, see Section~\ref{The 27 lines} for details. 
This group is the Weyl group $\W$ of type $E_6$ and its action on $N(S)$ can be obtained through the Galois action. 
These symmetries give a lot of restrictions on the possible beautiful formulae for $S$.  
We obtain in Section \ref{sec:smooth} the following results:
\begin{restatable*}{theorem}{mainthm}
\label{thm:main}
        The only possible form (up to multiplication) of a beautiful formula of degree~ $4$ for smooth cubic surfaces $S$ with the LLSvS variety $Z(S)$ is the following $S$-$Z(S)$ relation:
\begin{align}\label{S-Z(S) formula}
         \L^4 [Z(S)] &= [S^{(4)}] - (1 - \L +  \L^2)[S^{(3)}] - \L[S][S^{(2)}] + (\L+ \L^2+ \L^3)[S^2] \\
	 &- 2\L^2[S^{(2)}] - ( \L- \L^2 + \L^3- \L^4 + \L^5)[S] + (\L^2+ \L^4+ \L^6). \nonumber
\end{align}
More precisely it is a unique (up to multiplication) formula of degree $4$ that holds in $\K_0(\Chow)$ for smooth cubic surfaces $S$ with the LLSvS variety $Z(S)$.

In terms of Hilbert schemes this relation has the following form:
\begin{align}\label{S-Z(S) formula Hilb}
        \L^4[Z(S)] &= [S^{[4]}] - (1 - \L + \L^2)[S^{[3]}] - 2\L[S][S^{[2]}] + (2\L + \L^2 + 2\L^3)[S^2]\\
	&- 3\L^2[S^{[2]}] - (\L - 2\L^2 - 2\L^4 + \L^5 ) [S] + (\L^2 + \L^4 + \L^6 ). \nonumber
\end{align}
\end{restatable*}

\begin{restatable*}{theorem}{motivicmainthm}
        \label{thm:motivicmain}
        The $S$-$Z(S)$ relation \eqref{S-Z(S) formula Hilb} lifts to an equivalence in the category $\Chow$ of Chow $k$-motives:
\begin{align*}
  &\h(Z(S))(4)\oplus\h(S^{[3]}) \oplus\h(S^{[3]})(2) \oplus\h(S\times S^{[2]})(1)^{\oplus 2}\oplus \h(S^{[2]})(2)^{\oplus 3} \oplus
  \h(S)\oplus \h(S)(5) 
  \cong \\ &\h(S^{[4]}) \oplus \h(S^{[3]})(2)
  \oplus \h(S^2)(1)^{\oplus 2} \oplus \h(S^2)(2) \oplus \h(S^2)(3)^{\oplus 2}\oplus 
  \h(S)(2)^{\oplus 2}\oplus\h(S)(4)^{\oplus 2} 
\oplus \\& \Q(2) \oplus \Q(4) \oplus \Q(6).        
\end{align*}
\end{restatable*}

In Section~\ref{Singular cubic surfaces} we will use the same realization in the case of singular cubic surfaces with, at most, rational double point singularities.
We obtain the following analogs of the $S$-$Z(S)$ relation~(\ref{S-Z(S) formula}) in the case of singular cubic surfaces with one rational double point singularity of type~$A_1$ or~$A_2$.

\begin{restatable*}{lemma}{Aonegrolemma}\label{lem:SA1Gro}
  Let $S$ be a cubic surface over a field $k$ with one rational double point singularity of type $A_1$. Denote by $A(S)$ the zero-dimensional scheme (which may not have a $k$-point) of lines passing through the singular point, then: 
\begin{equation}\label{eq:SA1Gro}
  \KGro \ni [S] = 1 + [A(S)]\L + \L^2 + \Delta,
\end{equation}
where $\Delta = \P^1 - [C]$ and $C$ is a smooth conic (which also may not have a $k$-point).
\end{restatable*}
\begin{restatable*}{theorem}{Aonethm}\label{thm:A1} 
  Let $S$ be any singular cubic surface with one singularity of type $A_1$ such that~$\Delta = 0$, see Lemma~\ref{lem:SA1Gro}. The following relation is a beautiful formula $($i.e. it holds in~$\KGro)$ of degree $4$ for $S$ with the LLSvS variety $Z(S)$:
\begin{equation}\label{S-ZS-A1}
  \begin{split}
    \L^4 [Z(S)] &= [S^{(4)}] - (1 -  \L +  \L^2)[S^{(3)}] -  
   \L \HY [S^{(2)}] + ( \L+ \L^2+ \L^3)[S^2] - \\ &-
   \L^2[S^{(2)}] -  ( \L + 2 \L^3 + \L^5) \HY + ( \L^2 + \L^3 + \L^4 + \L^5 + \L^6).
  \end{split}
\end{equation}
\end{restatable*}
\begin{restatable*}{theorem}{Atwothm}\label{thm:A2}
  Let $S$ be any singular cubic surface with one singularity of type $A_2$. The following relation is a beautiful formula $($i.e. it holds in~$\KGro)$ of degree $4$ for $S$ with the LLSvS variety $Z(S)$ and the variety $A(S)$ of lines passing through the singular point:
  \begin{align}\label{eq:SZSA2}
        \L^4[Z(S)] &= [S^{(4)}] - (1-\L+\L^2)[S^{(3)}] - \L[S][S^{(2)}] + (\L + \L^2 + \L^3)[S^2] -\\
	&- \L^2 [S^{(2)}] - (\L + 2\L^3 + \L^5)[S] + \L^2 + \L^3 +\L^4[A(S)]+ \L^5 + \L^6. \nonumber
\end{align}
\end{restatable*}

In Section \ref{SB obstruction} we use the stable birational realization $\muSB\colon\KGro \rightarrow \Z[SB_{k}]$. 
In other words, we investigate formulae modulo $\L$.
This gives us some obstructions to possible forms of beautiful formulae.
In particular, we will show that the $S$-$Z(S)$ relation (\ref{S-Z(S) formula}) can not be true in the Grothendieck ring of varieties $\KGro$.

In Section \ref{Speculations for cubic fourfolds} we use the categorical realization $\muCat\colon \KGro \rightarrow \KCat$. 
In this realization~$\mu_{cat}(\L)=1$.
Under certain assumptions we obtain a hypothetical form of the $Y$-$Z(Y)$ for a smooth cubic fourfold $Y$:
\begin{restatable}{conjecture}{fourfoldconj}\label{conj:fourfold}
  The following relation holds in the Grothendieck ring of varieties $\KGro$:
\begin{align*}
  \YZYequation .
\end{align*}
\end{restatable}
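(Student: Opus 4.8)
\textbf{Proof proposal (for Conjecture \ref{conj:fourfold}).}
I do not expect a direct proof in $\KGro$: the $Y$-$F(Y)$ relation \eqref{The Y-F(Y) relation} came from the birational model $\phi$ of \eqref{eq:phi}, and no comparable geometric construction relating $Y^{(4)}$ with $Z(Y)$ for a cubic fourfold is known. The plan is therefore to pin down the precise shape of the relation using the realizations that can be computed, so that the statement stands as a well-supported conjecture. I would begin from the ansatz
\[
  \L^4[Z(Y)] = [Y^{(4)}] + c_1(\L)[Y^{(3)}] + c_2(\L)[Y][Y^{(2)}] + c_3(\L)[Y^2] + c_4(\L)[Y^{(2)}] + c_5(\L)[Y],
  \qquad c_i\in\Z[\L],
\]
whose shape is copied from \eqref{S-Z(S) formula}: a ``homogeneous'' degree-$4$ part in the $[Y^{(k)}]$, $[Y^2]$ and $[Y]$ plus the single non-homogeneous correction $\L^4[Z(Y)]$, with no $F(Y)$-term, just as in Theorem \ref{thm:main}.

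The first constraint is the categorical realization $\muCat$, in which $\muCat(\L)=1$. Using $[\D(Y)]=[\CA_Y]+3$, the identity $[\D(Z(Y))]=[\Sym^4_{GK}\CA_Y]$ (which would follow from Galkin's conjectured equivalence $\D(Z(Y))\cong\Sym^4_{GK}\CA_Y$, just as $[\D(F(Y))]=[\Sym^2_{GK}\CA_Y]$ follows from \eqref{The Y-F(Y) relation}), and the descriptions of $\muCat([Y^{(n)}])$ through the categorical symmetric-power calculus — which in dimension four requires the semiorthogonal structure of the relevant symmetric or Hilbert schemes, itself part of the work — the image of the ansatz under $\muCat$ becomes an identity in $\KCat$ among the classes $[\Sym^i_{GK}\CA_Y]$ for $i\le 4$ and integer multiples of the unit; matching the coefficient of each forces the values $c_i(1)$. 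A second, rigidifying, constraint then fixes the $\L$-exponents. For this I would (i) require the relation to be the $d=4$ member of a family of formulae whose $\L$-powers depend on $d=\dim Y$ and whose $d=2$ member is exactly the $S$-$Z(S)$ relation of Theorem \ref{thm:main} — natural in view of the explicit $d$-dependence already present in \eqref{The Y-F(Y) relation} — and (ii) cross-check against $\muGS$ by evaluating both sides for a Pfaffian cubic fourfold $Y$, for which $Z(Y)$ is deformation equivalent to $\Hilb^4$ of a $K3$ surface, and comparing the resulting classes in $\K_0(\Chow)$, or just the Hodge numbers, which are deformation invariants; this is a finite computation, since the Chow motives of $Y$, of its symmetric powers, of $Y^2$ and of $\Hilb^4$ of a $K3$ surface are all known. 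Reducing the relation modulo $\L$ via $\muSB$ provides a further, weaker, consistency test.

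The main obstacle is structural: $\muCat$, $\muGS$ and $\muSB$, even together, do not detect an arbitrary class of $\KGro$, so this procedure can only isolate the candidate and corroborate it, not prove it. A genuine proof would need either a uniform birational model linking $Y^{(4)}$ with $Z(Y)$, which seems out of reach, or — in the spirit of Theorem \ref{thm:motivicmain} — a decomposition of the Chow motive of $Z(Y)$, valid for every smooth cubic fourfold, into motives of symmetric powers of $Y$; producing such a decomposition is the crux. A secondary difficulty inside the derivation itself is that both the higher-dimensional formula for $\muCat([Y^{(n)}])$ and Galkin's equivalence are conjectural, so the categorical constraint of the first step is only conditional.
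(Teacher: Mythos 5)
Since the statement is a conjecture, neither you nor the paper proves it; what the paper actually supplies is a conditional derivation, and your route differs from it in a way worth spelling out. The paper does not run an ansatz-and-constraints argument through the realizations. Instead it crystallizes the categorical heuristics and the theorems of Beauville--Donagi and Addington--Lehn/Lehn into a single Assumption~\ref{asm:K3}, namely that in $\KGro$ itself one has $[Y]=\L[\Kthree]+1+\L^2+\L^4$, $[F(Y)]=[\Kthree^{[2]}]$ and $[Z(Y)]=[\Kthree^{[4]}]$ for some K3 surface $\Kthree$, and then \emph{proves} the displayed relation under that assumption by a direct computation: $[Y^{(2)}],[Y^{(3)}],[Y^{(4)}],[Y^2],[Y][Y^{(2)}]$ are expanded through symmetric powers of $[\Kthree]$ using the pre-$\lambda$-ring axiom~\eqref{eq:lambdasum} and $\Sym^n(\L^m\alpha)=\L^{nm}\Sym^n(\alpha)$, while $[Z(Y)]=[\Kthree^{[4]}]$ is expanded via G\"ottsche's formula~\eqref{Gottsche-formula}; the relation then drops out, and linear algebra shows it is the unique one up to multiplication provided the classes of $\Kthree$ and its symmetric powers satisfy no relations. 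In your plan this decisive computation appears only as a ``cross-check'' against $\muGS$ for Pfaffian cubics, while the load-bearing steps are the $\muCat$ constraint and an interpolation requirement in $d$.

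The concrete weakness of your ordering is that those primary constraints underdetermine the formula. The categorical realization only sees $c_i(1)$ (and is conditional on Galkin's conjecture, though the needed expressions for $\muCat([Y^{(n)}])$ are already available from the zeta-function identity~\eqref{Zmot=Zcat}, so that part is not extra work). The requirement of being ``the $d=4$ member of a family'' whose $d=2$ member is the $S$-$Z(S)$ relation is not a precise condition, and in fact the conjectured fourfold coefficients do not follow any naive pattern from~\eqref{S-Z(S) formula} (compare $-(1-\L+\L^2)[S^{(3)}]$ with $-(1+\L^2+\L^4)[Y^{(3)}]$), so interpolation alone would likely mislead. Matching ``just the Hodge numbers'' of a Pfaffian example is also too coarse to pin down the $c_i(\L)$ uniquely, and comparing classes in $\K_0(\Chow)$ or $\KGro$ for the Pfaffian case requires more than deformation equivalence of $Z(Y)$ with $\Hilb^4(\Kthree)$ (birational hyperk\"ahler varieties need not have equal classes in $\KGro$ without further input -- this is exactly why the paper isolates the equality as an explicit assumption). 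So to actually arrive at the stated formula you should promote your step (ii) to the main derivation, in the form the paper gives it: fix the K3 identities as an assumption in $\KGro$ (or at least in $\K_0(\Chow)$), expand everything in symmetric powers of $\Kthree$, and verify uniqueness up to scaling; your realization checks ($\muCat$, $\muSB$) then play the corroborating role you correctly assign them at the end. Your closing assessment -- that no realization argument can prove the conjecture and that a motivic decomposition of $\h(Z(Y))$ valid for all smooth cubic fourfolds would be the real theorem -- agrees with the paper's stance.
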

In Appendix we place character tables for the Weyl group $\W$ of type $E_6$ and some related groups. We also summarize there all the forms of the $S$-$Z(S)$ relation that we obtain.

\subsection*{Acknowledgements}
I am grateful to Sergey Galkin for the formulation of the problem, numerous useful discussions and permanent attention.
I would like to thank Alexander Kuznetsov, Artem Prihodko, Dmitry Kubrak, Galina Ryazanskaya for discussions, corrections, references and their interest in this work.
The author is partially supported by Laboratory of Mirror Symmetry NRU HSE, RF Government grant, ag. \textnumero\ 14.641.31.0001.


\section{Preliminaries} \label{Preliminaries}
In Section~\ref{KGro} we recall the basics on the Grothendieck ring of varieties. 
While this ring is too complicated one may consider realizations from this ring to some simpler rings. 
We list certain realizations which we will need in further sections.
In Section~\ref{sec:burnside} we introduce the Burnside ring of a field. 
This ring will help us later to obtain some relations between classes of zero dimensional schemes in the Grothendieck ring of varieties.
In Section~\ref{The 27 lines} we describe symmetries of $27$ lines on a smooth cubic surface $S$. 
These symmetries can be viewed through the rich structure on $H^2(S,\Z)$.
Finally, in Section~\ref{LLSvS} we recall some results about the LLSvS variety $Z(Y)$ of $\P^2$-families of generalized twisted cubic curves on a cubic hypersurface $Y$.

\subsection{The Grothendieck ring of varieties.}\label{KGro}
The Grothendieck ring of varieties appeared for the first time as the "K-group" in the Grothendieck's letter \cite{Gro} to Serre dated 16 August~1964.
Let $k$ be a field.
The \emph{Grothendieck ring} $\KGro$ of $k$-varieties as an abelian group is generated by the isomorphism classes of reduced, quasi-projective $k$-schemes.
The relations are
$$ [X] = [Y] + [X \setminus Y],$$
whenever $Y$ is a closed subscheme of $X$.

The multiplication in $\KGro$ is defined by $[X]\cdot [Y]$ = $[X\times_{k} Y]$. 
The neutral element is~$1 = [\Spec(k)]$. 
We denote by $\L:=[\A^1_k]$ the class of an affine line.
Detailed references on the Grothendieck ring of varieties are \cite{Loo02, Bit04}. 

Let $E$ over $B$ be a Zariski locally-trivial fibration with the fiber $F$. Then
\begin{equation}\label{eq:fibration}
  [E] = [F]\cdot[B].
\end{equation}
This is proved by induction on dimension of $B$.

Let $X$ to be a smooth variety and $W\subset X$ be a smooth closed subvariety of codimension $c$. Then
\begin{equation}\label{eq:bittner}
  [\Blow_W(X)] - [\P(N_{W/X})] = [X] - [W], 
\end{equation}
where $\Blow_W(X)$ is the blow up of $X$ in $W$ and $N_{W/X}$ is the normal bundle to $W$. 
Note that~$[\P(N_{W/X})]=[\P^{c-1}][W]$ by \eqref{eq:fibration}. 
If $k$ is of characteristic zero, then there is an alternative description of the Grothendieck ring $\KGro$ due to Bittner: the generators are classes of smooth projective connected varieties and the relations are of the form~\eqref{eq:bittner}, \cite{Bit04}.

For us a \emph{pre-$\lambda$-ring} is a commutative ring with an identity element $1$ and a set of operations $\lambda^n$ such that
\begin{enumerate}
        \item $\lambda^0(x) = 1$;
        \item $\lambda^1(x) = x$;
	\item \label{eq:lambdasum} $\lambda^n(x+y) = \sum_{i+j=n} \lambda^i(x)\lambda^j(y)$.
\end{enumerate}

Denote by $\Sym^n(X)=:X^{(n)}$ the $n$-th symmetric powers of $X$. For each $n \geqslant 0$ the operations~$X\mapsto \Sym^n(X)$ descends to $\KGro$ and define the pre-$\lambda$-ring structure on it, i.e.~$\lambda^k([X])=[X^{(n)}]$, \cite{GLM06, GLM13}. 

Consider a power series $\Lambda(x,t) = \sum_{n=0}^{\infty}\lambda^n(x)t^n$.
Then the axioms of a pre-$\lambda$-ring is equivalent to the requirement that $\Lambda(x,t) = 1 + xt + \sum_{n=2}^{\infty} \lambda^n(x)t^n$ and it is multiplicative, i.e 
$$\Lambda(x+y,t) = \Lambda(x,t)\cdot\Lambda(y,t).$$
In these terms the structure of a pre-$\lambda$-ring on $\KGro$ is defined by the multiplicative \emph{Kapranov's zeta function}:
$$Z_{Kap}(X,t) = 1 + [X]t + \sum_{n=2}^{\infty}[X^{(n)}]t^n\in\KGro[[t]].$$
We also have, \hbox{\cite[Lemma~4.4]{Go01}}:
\begin{equation}
\Sym^n(\L^m\alpha) = \L^{nm}\Sym^n(\alpha),\ \alpha \in \KGro.
\end{equation}

The Grothendieck ring of varieties is still very poorly understood. 
We will call a ring morphism $\mu$ from $\KGro$ to any ring $R$ a \emph{realization homomorphism with values in $R$}. Some authors used the term \emph{motivic measure} for a ring homomorphism $\mu:\KGro\rightarrow R$.
See \cite{GS14} for some well-known examples of realization homomorphisms and their application to the study of cubic hypersurfaces. 
Here we list the realization homomorphisms which will be used in this work.

\subsubsection{Stable birational realization}\label{sec:SBreal}
Two smooth projective varieties $X$ and $Y$ are called stably birationally equivalent if for some $n,m \geqslant 1$ the variety $X\times \P^m$ is birationally equivalent to~$Y\times \P^n$.

Let $\Z[SB_k]$ be a free abelian group generated by the stable birational equivalence classes of smooth, projective, irreducible $k$-varieties. One can equip it with multiplication that given by the product of varieties, \cite{LL03}.
In characteristic zero, the quotient of $\KGro$ by the principal ideal $(\L)$ is naturally isomorphic to the ring $\Z[SB_k]$.
In other words there exists a realization homomorphism
$$ \mu_{sb}\colon \KGro \rightarrow \Z[SB_k],$$
which sends the class $[X_k]$ of a smooth projective variety to its stable birational equivalence class.

We will need the following result:
\begin{theorem}\label{mod L theorem}\cite{LL03}
        Let $k$ be a field of characteristic zero. If $X$ and $Y_1, \dots, Y_m$ are smooth projective connected varieties and 
      $$\KGro/\L\ni [X] = \sum_{j=1}^{m}n_j[Y_j] $$
        for some $n_j \in \Z$, then $X$ is stably birationally equivalent to one of the $Y_j$.
\end{theorem}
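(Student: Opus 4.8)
The plan is to reduce the statement to two facts recalled above: the existence of the stable birational realization $\muSB\colon\KGro\to\Z[SB_k]$, and the fact that, by definition, $\Z[SB_k]$ is a \emph{free} abelian group on the set of stable birational equivalence classes. I would first recall how $\muSB$ is built, since this is the only place where $\ch k = 0$ enters. By Bittner's presentation, in characteristic zero $\KGro$ is generated as an abelian group by the classes $[X]$ of smooth projective connected varieties, subject to the blow-up relations \eqref{eq:bittner}. Sending $[X]$ to its stable birational class $\langle X\rangle\in\Z[SB_k]$ respects these relations: $\Blow_W(X)$ is birational to $X$, so $\langle\Blow_W(X)\rangle=\langle X\rangle$, and $\P(N_{W/X})\to W$ is a Zariski-locally-trivial $\P^{c-1}$-bundle, hence $\P(N_{W/X})$ is birational to $W\times\P^{c-1}$ and $\langle\P(N_{W/X})\rangle=\langle W\rangle$; thus both sides of \eqref{eq:bittner} are sent to $\langle X\rangle-\langle W\rangle$. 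This gives a group homomorphism $\KGro\to\Z[SB_k]$, and since $[\P^1]=\L+1$ while $\P^1$ is stably birational to a point (so $\langle\P^1\rangle=\langle\Spec k\rangle=1$), it kills $\L$ and descends to $\muSB\colon\KGro/(\L)\to\Z[SB_k]$ with $\muSB([X])=\langle X\rangle$.

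Granting this, the theorem is immediate. Applying the group homomorphism $\muSB$ to the hypothesis $[X]=\sum_{j=1}^m n_j[Y_j]$ in $\KGro/(\L)$ yields $\langle X\rangle=\sum_{j=1}^m n_j\langle Y_j\rangle$ in $\Z[SB_k]$. Grouping the right-hand side by stable birational class, we may rewrite it as $\sum_{[c]}m_{[c]}\,[c]$, where $[c]$ ranges over the distinct stable birational classes occurring among $Y_1,\dots,Y_m$ and $m_{[c]}=\sum_{j\,:\,\langle Y_j\rangle=[c]}n_j$. Since $\Z[SB_k]$ is free abelian on the set of \emph{all} stable birational classes, I would compare the coefficients of the basis element $\langle X\rangle$: on the left it is $1$, so $\langle X\rangle$ must occur on the right, i.e.\ $\langle X\rangle=\langle Y_j\rangle$ for some $j$. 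This says precisely that $X$ is stably birationally equivalent to $Y_j$, which is the claim. Note that only the existence of $\muSB$ as a well-defined group homomorphism with $\muSB([X])=\langle X\rangle$ is used; its injectivity and multiplicativity (i.e.\ that $\muSB$ is a ring isomorphism) are not needed.

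The only genuinely nontrivial input — and hence the main obstacle — is Bittner's presentation of $\KGro$ in characteristic zero, which is what allows $[X]\mapsto\langle X\rangle$ to be extended off the smooth projective generators; its proof relies on Hironaka's resolution of singularities. Everything else (compatibility with the blow-up relations, the vanishing on $\L$, the final coefficient comparison in a free abelian group) is formal. If in addition one wanted the stronger assertion that $\muSB$ is an \emph{isomorphism} $\KGro/(\L)\xrightarrow{\ \sim\ }\Z[SB_k]$, one would further invoke the weak factorization theorem to show that $[X]\bmod\L$ depends only on the stable birational class of $X$; but this is not required for Theorem~\ref{mod L theorem}.
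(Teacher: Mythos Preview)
Your argument is correct. Note, however, that the paper does not supply its own proof of this statement: it is quoted as a result of Larsen--Lunts \cite{LL03}, so there is nothing in the paper to compare against beyond the surrounding discussion in Section~\ref{sec:SBreal}, which already records exactly the two ingredients you use (the existence of $\muSB$ with kernel~$(\L)$, and the freeness of $\Z[SB_k]$ as an abelian group on stable birational classes).

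One minor historical/expository point: you derive the existence of $\muSB$ from Bittner's presentation \cite{Bit04}, whereas the original argument in \cite{LL03} predates Bittner and constructs the map directly via the weak factorization theorem (showing that any two smooth projective models of the same birational class have the same image, and then extending off smooth projective classes by resolution). Your route through Bittner is cleaner and is the way the map is usually explained nowadays; the substance --- resolution of singularities plus weak factorization --- is the same either way, since Bittner's presentation itself rests on weak factorization. Your closing remark that only the \emph{existence} of $\muSB$ (not its injectivity) is needed for the coefficient comparison is also correct and worth keeping.
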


\begin{remark}\label{rmk:L-sing}
Note that Theorem \ref{mod L theorem} also can be applied to singular $X$ and $Y_i$ if their singularities are \emph{$\L$-singularities}.
We say that $X$ has $\L$-singularities if $[\tilde{X}]\equiv [X] ~(mod~\L)$, where $\tilde{X}$ is a resolution of singularities. 
This definition does not depend on the choice of $\tilde{X}$ since the kernel of the realization $\mu_{sb}$ is the ideal generated by $\L$. 
	
\end{remark}

\subsubsection{Categorical realization}\label{sec:catreal}
The ring $\KCat$ as an abelian group is generated by quasi-equivalence classes of pre-triangulated dg-categories. The relations are
$$[\CC] = [\CA] + [\CB]$$
whenever there is a semi-orthogonal decomposition $\CC = \langle \CA, \CB \rangle$. See Section 4 in \cite{BLL04} for details and the definition of the product on this ring. 
There exists a realization homomorphism
$$\muCat\colon \KGro \rightarrow \KCat$$
which sends the class $[X_k]$ of a smooth projective variety to the class $[\D(X)]$ of its bounded derived category.
Note that $\muCat(\P^d) = d+1$, \cite[Example~7.4]{BLL04}, and so $\muCat(\L)=1$. 

Symmetric power operations $\Sym_{GK}^n$ in sense of Ganter and Kapranov, \cite{GK14}, give rise to $\lambda$-operations on $\KCat$, \cite{GS15}.
This allows to define the \emph{Galkin-Shinder's categorical zeta-function}:
$$ Z_{GS}(\CC, t) = \sum_{n\geqslant 0}^{\infty}[\Sym_{GK}^n\CC]t^n.$$
Let $X$ be a smooth projective variety. The expressions for $\muCat([X^{(n)}])$ follow from the relation~\cite{GS15, BGLL17} between $Z_{Kap}(X, t)$ and $Z_{GS}(X,t)$:
\begin{equation}\label{Zmot=Zcat}
        Z_{GS}(\muCat([X]), t) = \prod_{m\geqslant 1}^{\infty}\muCat(Z_{Kap}(X,t^m)).
\end{equation}
\begin{example}\label{SymCat}
        Comparing coefficients of $t^2$ in \eqref{Zmot=Zcat} we obtain:
        \begin{equation}\label{CatSym}
	  \muCat([X^{(2)}]) = \Sym^2_{GK}(\muCat([X])) - \muCat([X]).
        \end{equation}
        Let $X$ be $m$ isolated points, i.e. $[X] = m$ in $\KGro$. From \eqref{CatSym} it follows that
        \begin{equation}\label{Sym0GK(m)}
	  \Sym^2_{GK}(m) = \binom{m}{2} + 2m.
	\end{equation}
\end{example}

\subsubsection{Gillet--Soul\'e motivic realization}\label{sec:GSreal}
Let $k$ be a field of characteristic zero.
Denote by $\Chow$ the Grothendieck's category of Chow $k$-motives, \cite{Sch94}.
There exists \cite{GS96} a realization homomorphism
$$\muGS\colon \KGro \rightarrow \K_0(\Chow)$$
such that in the case $X$ is smooth and projective, $\muGS([X])=[\h(X)]$ the class of the motive.
This is a homomorphism of pre-$\lambda$-rings since $\muGS([\Sym^n X])=[\Sym^n\h(X)]$, \cite{RA98}, see also~\cite[Theorem~2.2]{Go01} for detailed discussion.

Let $\ChowArt$ be the full subcategory of $\Chow$ generated by the motives of smooth zero-dimensional projective varieties and their summands. 
We will call these motives \emph{Artin motives}.
The motive $\h(X)$ of a smooth zero-dimensional projective variety is completely described by the action of the Galois group $\Gal_k$ on its geometric points.
There exists an equivalence of $\ChowArt$ with the category of Galois representations which sends $\h(X)$ to the \emph{associated Galois module}~$\Q^{X(\bar{k})}$,~\cite{SP11}.
By \emph{a Galois representation} here we understand a rational finite-dimensional representation of the Galois group $\Gal_k$ such that the action of $\Gal_k$ factors through a finite group. 

There exists a motive $\L\in \Chow$, called the \emph{Lefschetz motive}, such that the motive of the projective line decomposes as $\h(\P^1)=\h(\Spec(k))\oplus\L$.
Note that by construction~\cite{Sch94} the motive $\L$ is an invertible motive, i.e. there exists $\L^{-1}$ such that $\L\otimes\L^{-1} \cong \Q:= \h(pt)$.
We will call a motive in $\Chow$ \emph{zero-dimensional} if it is a finite direct sum of $M\L^n:=M\otimes \L^n$ where $M$ is an Artin motive.
Denote by $\ChowZero\subset \Chow$ the full subcategory generated by zero-dimensional motives.
This category is equivalent to the category of graded Galois representations.
 
\subsubsection{G\"ottsche formula}
The following relation \cite{Go01, LL18} in $\KGro$ connects symmetric powers $X^{(n)}$ and Hilbert schemes of points $X^{[n]}$ in the case of a smooth surface $S$. 
\begin{equation}\label{Gottsche-formula}
  \sum_{n=0}^{\infty}[S^{[n]}]t^n = \prod_{i=1}^{\infty}Z_{Kap}(S, \L^{i-1}t^i).
\end{equation}
\begin{example}\label{Hilb-Sym234}
        The coefficients of $t^2, t^3$ and $t^4$ give the following relations
        \begin{align*}
          [S^{[2]}]   &= [S^{(2)}] + \L [S], \\
                [S^{[3]}]   &= [S^{(3)}] + \L [S^2] + \L^2 [S], \\
          [S^{[4]}]   &= [S^{(4)}] + \L [S] [S^{(2)}] + \L^2 [S^{(2)}] + \L^2 [S^2] + \L^3 [S].
\end{align*}

\end{example}

\subsection{Burnside ring}\label{sec:burnside}
Let $G$ be a discrete group. Denote by $\Burn_+(G)$ the set of isomorphism classes of finite $G$-sets.
It is a commutative semi-ring with addition and multiplication given by disjoint union and Cartesian product. 
As an additive monoid it is a free commutative monoid generated by isomorphism classes of finite irreducible $G$-orbits, i.e. conjugacy classes of subgroups of $G$ of finite index.
The Burnside ring $\Burn(G)$ is defined as the associated Grothendieck ring.
It is a combinatorial avatar of the representation ring of $G$.
The pre-$\lambda$-ring structure on $\Burn(G)$ is defined by the operations $\lambda^n(S)=[S^n/\S_n]$, where $S$ is a $G$-set,~\cite{Rok11}.
Denote by $\Burn(k)$ the Burnside ring $\Burn(\Gal(k))$ of the Galois group $\Gal_k$ considered as a discrete group.
See \cite{Rok11} for details about Burnside ring of a field and its connections with the Grothendieck ring of varieties $\KGro$.

Denote by $\Burn_+(k)$ the set of isomorphism classes of smooth zero dimensional schemes of finite length over $k$. 
It is a commutative semi-ring and generated as an additive monoid by isomorphism classes of finite field extensions $K/k$.
The Burnside ring $\Burn(k)$ is naturally identified with the Grothendieck ring of the semi-ring $\Burn_+(k)$.

From the last description of $\Burn(k)$ we see a natural map $\mathrm{Art}_k\colon \Burn(k)\rightarrow \KGro$.
This is a homomorphism of pre-$\lambda$-rings, \cite{Rok11}.
\begin{remark}
  The map $\mathrm{Art}_k\colon\Burn(k)\rightarrow \KGro$ is an embedding for algebraically closed fields, finite fields and fields of characteristic zero \cite{LS10}. 
\end{remark}
\subsection{The 27 lines.}\label{The 27 lines}
Let $S$ be a smooth cubic surface over a perfect field $k$. 
It is a classical result~\cite{Ca49},~\cite{Sal49} that over algebraically closed field $\bar{k}$ there are $27$ lines on $S$. There is a natural action of a certain group on the set of these lines. 
Consider the geometric N\'eron-Severi group $N(S):=\Pic(S\otimes \bar{k})$.
Denote by $K_S\in N(S)$ the canonical class of $S$.
According to~\cite{Ma74} the group $N(S)$ has the following structure:
\begin{enumerate}
        \item The group $N(S)=\Z^7=\overset{6}{\underset{i=0}{\bigoplus}}\Z E_i$.
        \item The class $K_S = -3E_0 + E_1 + \dots + E_6$.
        \item The intersection form $N(S)\times N(S)\rightarrow \Z$ is given by:
	  $$ (E_0,E_0)=1, (E_i,E_j) = -\delta_{ij} \text{ for } j\neq 0.$$
        \item The set $R = \{r\in N(S) \mid (r, K_S)=0, (r,r)=-2\}$ is the root system of the Weyl group~$\W$ of the Lie group $E_6$.
        \item The set $I = \{l\in N(S) \mid (l, K_S)=-1, (l,l)=-1\}$ consists of classes of lines on $S$.
\end{enumerate}

The group which stabilizes $K_S$ and preserves the intersection form is $\W$. This group acts on the set of lines $I$ and on the set of roots $R$.

\begin{remark}
        Geometrically we can see this structure in the following way.
        Over $\bar{k}$ surface~$S$ is a blow up of $\P^2$ in six points. 
        Then $E_1,\dots, E_6$ are the exceptional lines of this blow up and~$E_0$ is the image of the class of a line in $\P^2$.        
\end{remark}

The 27 classes of lines, i.e. vectors from $I$ have the following description in terms of $E_i$.
\begin{itemize}
        \item Six vectors $E_i$, $i\neq 0$.
        \item Fifteen vectors $E_0 - E_i - E_j$, $i\neq j \neq 0$.
	\item Six vectors $2E_0 + E_i - \sum_{j=1}^{6} E_j$, $i\neq 0$.
\end{itemize}

The 72 vectors of the root system $R$ have the following description in terms of $E_i$.
\begin{itemize}
        \item One vector $2E_0 - \sum E_i$.
        \item Twenty vectors $E_0 - E_i - E_j - E_k$, $i\neq j\neq k\neq 0$.
        \item Thirty vectors $E_i - E_j$, $i\neq j$.
        \item Twenty vectors $-E_0 + E_i + E_j + E_k$, $i\neq j\neq k\neq 0$.
	\item One vector $-2E_0 + \sum_{i=1}^{6} E_i$.
\end{itemize}

\subsection{Twisted cubics}\label{LLSvS}
Let $Y$ be a smooth cubic hypersurface.
Let $M_3(Y)$ denote the compactification of the subscheme of twisted cubic curves in the Hilbert scheme $\Hilb^{3n+1}(Y)$, where~$3n+1$ is the Hilbert polynomial in variable $n$.
The scheme $M_3(Y) =: \Hilb^{gtc}(Y)$ is a moduli space of generalized twisted cubic curves on $Y$.
Geometry of $M_3(Y)$ is quite interesting.
\begin{theorem}\label{LLSvS theorem}\cite{LLSvS}
        Let $Y\subset \P^5$ be a smooth cubic hypersurface that does not contain a plane. 
        Then the moduli space $M_3(Y)$ of generalised twisted cubic curves on $Y$ is a smooth and irreducible projective variety of dimension $10$.
	Further, there is a smooth eight-dimensional holomorphically symplectic manifold $Z(Y)$ and morphisms $u\colon M_3(Y) \rightarrow Z(Y)$ and $j\colon~Y\rightarrow~Z(Y)$ with the following properties:
        \begin{enumerate}
                \item The morphism $j$ is a closed embedding of $Y$ as a Lagrangian submanifold in $Z(Y)$.
                \item The morphism $u$ factors as follows:
                        \begin{center}\label{Z contraction} \begin{tikzpicture}
\matrix (m) [matrix of math nodes, row sep=2em,
column sep=2em]
{  M_3(Y) && Z(Y) \\
                & Z' & \\ };
        \path[->] (m-1-1) edge node[auto,swap]{$a$}        (m-2-2);
        \path[->] (m-2-2) edge node[auto,swap]{$\sigma$}   (m-1-3);
        \path[->] (m-1-1) edge node[auto]{$u$}        (m-1-3);
\end{tikzpicture}
\end{center}
where $a\colon M_3(Y) \rightarrow Z'$ is a $\P^2$-fiber bundle and $\sigma\colon Z'\rightarrow Z(Y)$ is the blow-up of $Z(Y)$ along $Y$. 
        \end{enumerate}
\end{theorem}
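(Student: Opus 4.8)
The plan is to carry out the construction of Lehn--Lehn--Sorger--van Straten \cite{LLSvS}; I describe its main steps and where the real work lies.

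\textbf{Reduction to a fibrewise problem.} First I would observe that a non-degenerate generalized twisted cubic $C\subset\P^5$ spans a unique linear $\P^3$, and check --- using that $Y$ contains no plane --- that every curve parametrized by $M_3(Y)$ is non-degenerate: a planar degeneration would be a plane section of $Y$ carrying a spatial embedded point, and such a configuration still spans a $\P^3$. This produces a morphism
$$ p\colon M_3(Y)\longrightarrow\G,\qquad [C]\longmapsto[\langle C\rangle], $$
to the Grassmannian $\G$ of $3$-planes in $\P^5$ (which has dimension $8$), whose fibre over $[\P^3]$ is the moduli space of generalized twisted cubics on the cubic surface $S=\P^3\cap Y$. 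A Bertini-type argument shows that the locus in $\G$ over which $S$ fails to be a normal cubic surface with at worst rational double points has codimension $\geqslant 2$, which reduces the problem to a relative version of the study of generalized twisted cubics on cubic surfaces with ADE singularities.

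\textbf{Local model and the $\P^2$-bundle $a$.} The key local input is the Piene--Schlessinger description of the component $H_0\subset\Hilb^{3n+1}(\P^3)$ containing twisted cubics: it is smooth of dimension $12$, its general point is an honest (arithmetically Cohen--Macaulay) twisted cubic, and its boundary is a divisor of planar cubics carrying a spatial embedded point. On a smooth cubic surface $S$ the generalized twisted cubics are exactly the members of the $72$ linear systems $|D|$ attached to the classes $D\in N(S)$ with $D^2=1$ and $D\cdot K_S=-3$; Riemann--Roch gives $h^0(\CO_S(D))=3$, so each of these is a net $\cong\P^2$, and the $72$ nets form one orbit of the Weyl group $\W$ of type $E_6$. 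Extending this picture over the ADE-singular surfaces by a deformation-theoretic analysis and patching over $\G$, one obtains a smooth projective variety $Z'(Y)$ of dimension $8$ and a morphism $a\colon M_3(Y)\to Z'(Y)$ which is a Zariski-locally trivial $\P^2$-bundle (locally the projectivization of $H^0(\CO_S(D))$). Granting this, $M_3(Y)$ is smooth and projective of dimension $\dim\G+2=10$; it is irreducible because it is by definition the closure of the space of actual twisted cubics, which is irreducible thanks to the classical $\W(E_6)$-monodromy of the $72$ sheets over $\G$; and $Z'(Y)\to\G$ is generically $72\!:\!1$.

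\textbf{The contraction $\sigma\colon Z'(Y)\to Z(Y)$.} This is the delicate part. I would analyse the divisor $D_{\mathrm{na}}\subset M_3(Y)$ of non-aCM curves: such a curve is a plane section $\Pi\cap Y$ together with a spatial embedded point supported at a point $y\in Y$, and recording $y$ gives a morphism $D_{\mathrm{na}}\to Y$ which, after descent along $a$, realizes the image $E'\subset Z'(Y)$ of $D_{\mathrm{na}}$ as a projective bundle over $Y$ with fibres $\cong\P^3$. Checking that the conormal bundle of $E'$ in the smooth variety $Z'(Y)$ restricts to $\CO(1)$ on each $\P^3$-fibre, one is in a position to apply Nakano's contraction criterion: $E'$ blows down to a smooth projective variety $Z(Y)$ with a closed embedding $j\colon Y\hookrightarrow Z(Y)$ for which $\sigma$ is the blow-up of $Z(Y)$ along $j(Y)$, with exceptional divisor $E'=\P(N_{Y/Z(Y)})$. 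In particular $Z(Y)$ is smooth of dimension $8$ and $j$ is a closed immersion.

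\textbf{The symplectic structure and the Lagrangian property, over $\C$.} This last step is the one I expect to be the real obstacle, since it needs input beyond the Hilbert-scheme bookkeeping above. The cleanest route is via deformations: the relative version of the construction gives a smooth projective family $\mathcal Z\to\mathcal M$ over the (connected, $20$-dimensional) moduli of smooth cubic fourfolds without a plane, with $\mathcal Z_Y=Z(Y)$ and all fibres smooth; for a Pfaffian cubic fourfold $Y_0$ (these exist, and a general one contains no plane) one identifies $Z(Y_0)$ with a smooth projective hyperk\"ahler manifold of $K3^{[4]}$-type --- e.g.\ with a suitable moduli space of sheaves on the associated $K3$ surface --- so by deformation-invariance \emph{every} $Z(Y)$ is an irreducible holomorphic symplectic $8$-fold, carrying a symplectic form $\omega$ unique up to scale. (Equivalently one can argue directly: tracking canonical classes through $a$ and $\sigma$ gives $K_{Z(Y)}=0$, and then $h^{1,0}(Z(Y))=0$, $h^{2,0}(Z(Y))=1$ and $\pi_1(Z(Y))=1$ together with Beauville--Bogomolov force the same conclusion.) Finally $j\colon Y\hookrightarrow Z(Y)$ is Lagrangian: $j^{*}\omega\in H^{0}(Y,\Omega^{2}_{Y})=H^{2,0}(Y)=0$ because $Y$ is a cubic fourfold, so $j(Y)$ is isotropic and of the expected dimension $4=\tfrac12\dim Z(Y)$. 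Throughout, the genuine technical difficulty is the deformation theory of generalized twisted cubics on singular cubic surfaces, which underlies both the smoothness of $M_3(Y)$ and the good behaviour of the two contractions.
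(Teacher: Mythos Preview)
The paper does not give a proof of this theorem at all: it is stated with the citation \cite{LLSvS} and used as a black box. So there is no ``paper's own proof'' to compare against; your proposal is an outline of the argument from the cited reference, not something the present paper undertakes.

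That said, as a sketch of the Lehn--Lehn--Sorger--van Straten construction your outline is essentially faithful. One small inaccuracy worth flagging: the $72$ classes on a smooth cubic surface $S$ parametrising the nets of twisted cubics are the roots $R\subset K_S^\perp$, i.e.\ the classes with $D^2=-2$ and $D\cdot K_S=0$ (the linear systems are $|-K_S+\alpha|$ for $\alpha\in R$), not classes with $D^2=1$ and $D\cdot K_S=-3$; this is how the $E_6$ root system and Theorem~\ref{gtc on sing surf} enter. Also, in \cite{LLSvS} the holomorphic symplectic form on $Z(Y)$ is produced directly (by descending a closed $2$-form built from the universal family on $M_3(Y)$ and checking non-degeneracy), rather than by deforming to a Pfaffian cubic; the deformation argument you sketch is closer to \cite{AL14,Le15}, which establish the $K3^{[4]}$ deformation type. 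Either route is fine for the Lagrangian claim, since $H^{2,0}(Y)=0$ forces $j^*\omega=0$ regardless of how $\omega$ is constructed.
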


In the case of cubic surfaces $S$ with at most rational double point singularities generalized twisted cubic curves come in $\P^2$-families. 
Let $\tilde{S}\rightarrow S$ be a minimal resolution of such a singular cubic surface $S$.
The smooth surface $\tilde{S}$ is a weak Del Pezzo surface.
The orthogonal complement $\Lambda := K_{\tilde{S}}^{\perp} \subset H^2(\tilde{S}, \Z)$ of the canonical divisor is a negative definite root lattice of type $E_6$, \cite{LLSvS}. 
The components of the exceptional divisor of this minimal resolution $\tilde{S}$ are $(-2)$-curves whose classes form a subset in the root system $R\subset \Lambda$ that is a root basis for a subsystem $R_0\subset R$. 
The connected components of the Dynkin diagram of $R_0$ are in bijection with the singularities of $S$.
This limits the possible combinations of singularity types of $S$ to the following list:
$A_1 , 2A_1 , A_2 , 3A_1 , A_1 + A_2 , A_3 , 4A_1 , 2A_1 + A_2 , A_1 + A_3 , 2A_2 , A_4 , D_4 , 2A_1 + A_3 ,
A_1 + 2A_2 , A_5 , D_5 , A_1 + A_5 , 3A_2 , E_6$, see \cite{Dol12} for details.

\begin{theorem}\cite{LLSvS}\label{gtc on sing surf}
    Let $S$ be a cubic surface with at most rational double point singularities.
    Then 
    $$ Hilb^{gtc}(S)_{red} \cong (R/\W(R_0))\times \P^2.$$
\end{theorem}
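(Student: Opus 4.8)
The plan is to transfer the question to the minimal resolution $\pi\colon\tilde S\to S$, which is a weak del Pezzo surface of degree $3$, and to realise the $\P^2$-families of generalized twisted cubics on $S$ as complete linear systems on $\tilde S$. I would work over $\bar k$ throughout and recover the statement over $k$ at the end by Galois descent: the root system $R\subset\Lambda=K_{\tilde S}^{\perp}$, the subsystem $R_0$ spanned by the classes of the exceptional $(-2)$-curves, the Weyl group $\W(R_0)$, and hence the finite set $R/\W(R_0)$ are all $\Gal_k$-stable, and each net below carries enough rational points to be a genuine $\P^2$ over its field of definition. For the numerical classification: a generalized twisted cubic $C\subset S$ spans the ambient $\P^3$ and has degree $3$ and arithmetic genus $0$ with respect to $-K_S=\mathcal O_S(1)$, so I would attach to it the class $h:=[\tilde C]+\sum a_i[\varepsilon_i]\in H^2(\tilde S,\Z)$, where $\tilde C$ is the strict transform, $\varepsilon_i$ the exceptional curves, and the $a_i\geqslant 0$ chosen to make $h$ nef. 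One checks $h^2=1$, $h\cdot K_{\tilde S}=-3$, and that $h\mapsto h+K_{\tilde S}$ is an isometry from $\{h:h^2=1,\ h\cdot K_{\tilde S}=-3\}$ onto $R$, so there are exactly $72$ such classes. Riemann--Roch and Kawamata--Viehweg vanishing (for $h$ nef the class $h-K_{\tilde S}$ is nef and big) give $h^0(\tilde S,\mathcal O(h))=3$ and $h^1=h^2=0$, so each nef class defines a net $|h|\cong\P^2$ whose generic member is a smooth twisted cubic disjoint from the exceptional locus; the whole net pushes down under $\pi$ to a $\P^2$-family of generalized twisted cubics on $S$.

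The group $\W(R_0)$ is generated by the reflections in the classes of the exceptional $(-2)$-curves (a system of simple roots of $R_0$); it fixes $K_{\tilde S}$ and preserves the intersection form, so it permutes the $72$ classes above, equivalently the $72$ roots. A nef twisted-cubic class is non-negative on every effective $(-2)$-curve, hence lies in the closed dominant chamber of $\W(R_0)$, and each $\W(R_0)$-orbit on $R$ meets that chamber in a unique element $r^{+}$. Conversely $h^{+}:=r^{+}-K_{\tilde S}$ is automatically nef: it is non-negative on every effective $(-2)$-curve by $R_0$-dominance, and for a $(-1)$-curve $\ell$ one has $h^{+}\cdot\ell=r^{+}\cdot\ell+1$ with $r^{+}\cdot\ell$ an integer of absolute value at most $\sqrt{2\cdot\tfrac43}<2$ by the Cauchy--Schwarz inequality in the negative-definite space $\Lambda_\Q$ (here $2=-(r^{+})^{2}$ and $\tfrac43=-(\ell+\tfrac13K_{\tilde S})^{2}$), so $r^{+}\cdot\ell\geqslant-1$ and $h^{+}\cdot\ell\geqslant 0$. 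Therefore the nef twisted-cubic classes on $\tilde S$ — and, via $\pi$, the $\P^2$-families of generalized twisted cubics on $S$ — are in bijection with $R/\W(R_0)$.

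It remains to check that the resulting map $\coprod_{[h]\in R/\W(R_0)}|h|\to\Hilb^{gtc}(S)_{\mathrm{red}}$ is an isomorphism of schemes. Surjectivity is the statement that every generalized twisted cubic on $S$ lifts to a member of the net attached to its class as above; injectivity and disjointness follow because the nef class $h$ is recovered from the curve through $\pi$ and the absorption of the exceptional components, and because $|h|\hookrightarrow\Hilb(\tilde S)$ is a closed immersion of a $\P^2$. I expect the real work — and the main obstacle — to be exactly here: one has to handle the degenerate members of the twisted-cubic component (coplanar triangles, a line with an embedded point, double-line configurations, and the special degenerations supported near the singular points of $S$), show that each is still picked up exactly once, and, more delicately, pin down the scheme structure of $\Hilb^{gtc}(S)$ along these loci so that the identification is with $(R/\W(R_0))\times\P^2$ on the nose rather than merely on points. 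This local deformation-theoretic analysis along the exceptional $(-2)$-curves is where the assumption that $S$ has only rational double points — and the resulting finite list of admissible singularity combinations recalled just before the theorem — is really used; for worse singularities the generalized twisted cubics no longer form $\P^2$-families.
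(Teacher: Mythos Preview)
The paper does not prove this theorem: it is quoted verbatim from \cite{LLSvS} (their Theorem~2.1) and no argument is supplied in the present text, so there is no ``paper's own proof'' to compare against. What you have written is, in outline, precisely the strategy of \cite{LLSvS} for cubic surfaces: pass to the minimal resolution $\tilde S$, identify twisted-cubic classes $h$ with $h^{2}=1$, $h\cdot K_{\tilde S}=-3$, translate by $K_{\tilde S}$ to land in the root system $R$, use Riemann--Roch and vanishing to see $|h|\cong\P^{2}$, and pick out the nef representatives as the $\W(R_{0})$-dominant roots. Your Cauchy--Schwarz check that the dominant class is automatically non-negative on $(-1)$-curves is clean and correct, and on a weak del~Pezzo the Mori cone is generated by the $(-1)$- and $(-2)$-curves, so nefness follows.

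You have also correctly located where the substance of \cite{LLSvS} lies and where your sketch stops being a proof: the passage from ``$\P^{2}$-worth of curves in each nef class on $\tilde S$'' to the scheme-theoretic identification of $\Hilb^{gtc}(S)_{\mathrm{red}}$. In \cite{LLSvS} this is handled by a careful case analysis separating the arithmetically Cohen--Macaulay generalized twisted cubics from the non-CM ones (the plane cubic with an embedded point), together with a local study of the Hilbert scheme near curves meeting or supported on the exceptional locus; your phrase ``absorption of the exceptional components'' hides real work, since for non-CM curves the correspondence between subschemes of $S$ and divisors on $\tilde S$ is not simply strict transform plus multiples of the $\varepsilon_{i}$. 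So: your proposal is the right skeleton and agrees with the source, but the acknowledged gap at the end is exactly the content of the cited theorem, not a routine verification.
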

That is why the set geometric points of $Z(S)$ is $R/\W(R_0)$.

\addtocontents{toc}{\protect\setcounter{tocdepth}{2}}

\section{When $\L = 0$. Obstructions from stable birational geometry}
\label{SB obstruction}
In this section we are going to use the \emph{stable birational realization}, Section~\ref{sec:SBreal}:
$$\muSB\colon\KGro \rightarrow \Z[SB_k].$$
The kernel of this map is the ideal generated by $\L$, \cite{LL03}. 
In other words we will investigate beautiful formulae modulo $\L$.
This realization allows us to distinguish elements in $\KGro$ using stable birational invariants.
The main tool to pass some information in the other direction is Theorem~\ref{mod L theorem} by Larsen and Lunts.

The existence of a $k$-point is a stable birational invariant. 
In \cite{Nis55} it is proved that if a regular $k$-scheme has a $k$-point then any proper $k$-scheme birational to it also has a $k$-point.
Around 1992 Endre Szab\'o found a new short argument.
The proof is reproduced in~\cite[Proposition~A.6]{KS00} and in \cite[page~183]{KSC04}.

Let $k$ be any field and $Y$ be any cubic hypersurface over $k$.
There exists always a $k$-point on the third symmetric power $Y^{(3)}$.
Indeed, pass any $k$-line. 
If the intersection with $Y$ is a line than $Y$ has a $k$-point.  
Otherwise, the intersection is a zero dimensional scheme.
If this scheme has no $k$-points then $Y$ has a $K$-point for some field extension of degree~$3$. 
This $K$-point gives a $k$-point on $Y^{(3)}$.
In the other case $Y$ already has a $k$-point and so $Y^{(3)}$, too.

\begin{lemma}\label{SBnonequiv}
Let $n,m$ be natural numbers and assume that $n$ is prime to $3$.
Then there exists a smooth cubic surface $S$ such that $S^{(n)}$ is not stably birationally equivalent to $S^{(3m)}$.
Moreover~$S^{(n_1)}\times\dots\times S^{(n_k)}$ is not stably birationally equivalent to $S^{(3m)}$ if some of $n_i$ is prime~to~$3$.
\end{lemma}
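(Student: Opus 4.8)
The plan is to produce a single smooth cubic surface $S$ over a suitable field $k$ which has no $0$-cycle of degree prime to $3$, so that in particular $S$ has no point in any field extension of degree prime to $3$, but does have a point over some cubic extension. The key obstruction is the index $\mathrm{ind}(S)$, the gcd of degrees of closed points of $S$; for a cubic surface over a field with a $\mathbf{Z}/3$-cover this index can be exactly $3$ (this is classical, going back to Swinnerton-Dyer and exploited by Coray). Concretely I would take $k = \mathbf{Q}$, or a suitable $p$-adic or function field, and let $S$ be a smooth cubic surface of index $3$; such surfaces exist (they are minimal over $k$, with Galois acting transitively with $3\mid$ all orbit sizes on the $27$ lines is not quite what is needed — what is needed is that every closed point has degree divisible by $3$, which follows from the index being $3$ together with the $3$-divisibility statement of Coray, \cite{Cor76a, Cor76b}). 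Since $S$ has a closed point of degree $3$ (obtained as in the discussion preceding the lemma: take a $k$-line, intersect with $S$, getting a degree-$3$ zero-cycle), $S^{(3)}$ has a $k$-point and hence so does $S^{(3m)}$ for every $m\geqslant 1$.

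First I would establish the positive direction: $S^{(3m)}(k)\neq\varnothing$ for all $m$, using the argument already given in the text (a generic $k$-line meets $S$ in a degree-$3$ closed subscheme), and noting $S^{(3m)} \supset \mathrm{Sym}^{m}$ of such a point in an evident way, so the conclusion on $S^{(3m)}$ follows since $S^{(a)}(k)\times S^{(b)}(k)$ maps to $S^{(a+b)}(k)$. Next I would prove the negative direction: if $S$ has index $3$ and $n$ is prime to $3$, then $S^{(n)}(k) = \varnothing$, because a $k$-point of $\mathrm{Sym}^n S$ is exactly an effective $0$-cycle of degree $n$ on $S$, whose degree would then be a multiple of $\mathrm{ind}(S)=3$, contradicting $\gcd(n,3)=1$. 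The same computation handles the product: a $k$-point of $S^{(n_1)}\times\cdots\times S^{(n_k)}$ gives effective $0$-cycles of degrees $n_1,\dots,n_k$, hence an effective $0$-cycle of degree $\sum n_i$; since some $n_i$ is prime to $3$, we cannot conclude divisibility directly from the sum — so here I would instead use that the individual $0$-cycle of degree $n_i$ prime to $3$ already contradicts index $3$, exactly as in the single-factor case. Finally, since $S$ is smooth and projective and connected, stable birational equivalence between $S^{(n)}$ (or the product) and $S^{(3m)}$ would force a bijection on $k$-points up to the relevant geometric operations; more simply, stable birational invariance of the property ``$X(k)\neq\varnothing$'' for smooth projective $X$ (via \cite{Nis55}, as recalled above) shows that $S^{(n)}(k) = \varnothing$ while $S^{(3m)}(k)\neq\varnothing$ are genuinely non-stably-birational.

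There is a subtlety: $S^{(n)}$ is in general singular, so I must be slightly careful applying the stable-birational dictionary. The clean way is to work with the Hilbert scheme $S^{[n]}$, which is smooth (as $S$ is a smooth surface), observe $S^{[n]}$ is birational to $S^{(n)}$, hence $S^{[n]}(k)\neq\varnothing \iff S^{(n)}(k)\neq\varnothing$ by \cite{Nis55}, and then the property ``has a $k$-point'' is a genuine stable birational invariant of the smooth projective variety $S^{[n]}$. So the final statement is: $S^{[n]}$ is not stably birational to $S^{[3m]}$ (equivalently with Hilbert schemes of the products), because one has a $k$-point and the other does not.

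The main obstacle is the existence and basic properties of a smooth cubic surface of index exactly $3$: one needs a field $k$ and a smooth cubic $S/k$ such that $S$ has a closed point of degree $3$ but no $0$-cycle of degree $1$ or $2$. I would cite Coray's results \cite{Cor76a, Cor76b} (or Swinnerton-Dyer) for the existence of such surfaces — for instance a suitably chosen cubic surface over $\mathbf{Q}$ with no rational point and no conjugate pair of points, but containing a Galois orbit of three conjugate points (which can be arranged by choosing the surface so that the Galois action on the $27$ lines has no fixed line and no orbit of size $2$, while the line-intersection construction still yields a degree-$3$ point). Verifying that such an $S$ is smooth and that the index is not $1$ (equivalently, $S$ has no $k$-point, which is the content of failure of the Hasse principle type examples, or simply a local obstruction at one place) is the only place where one genuinely invokes arithmetic input rather than formal manipulation.
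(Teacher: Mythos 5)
Your proposal is correct and follows essentially the same route as the paper: exhibit a smooth cubic surface with no closed points of degree prime to $3$ (the paper takes the explicit surface $x_0^3+px_1^3+p^2x_2^3-ax_3^3$ over $\Q_p$ from \cite{CM04} and invokes Coray's theorem that the Cassels--Swinnerton-Dyer conjecture holds over local fields, which is exactly the arithmetic input you left slightly vague), then note that $S^{(n)}$ has no $k$-point for $n$ prime to $3$ while $S^{(3m)}$ always has one, and conclude by stable birational invariance of having a $k$-point. Your extra care with the singularities of $S^{(n)}$ via the smooth birational model $S^{[n]}$ is a harmless refinement of the paper's terser appeal to that invariance.
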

\begin{proof}
There exist smooth cubic surfaces which have no points over any field extension $K/k$ of degree prime to $3$. 
For example, Lemma 3.4. in \cite{CM04}, consider a cubic surface $S$ over $k=\Q_p$ defined by an equation $x_0^3 + px_1^3 + p^2x_2^3 - ax_3^3$, where $a$ is not a cube modulo $p$.
Comparing valuations, it is easy to see that $S$ has no $\Q_p$-points. 
Over local fields Cassels and Swinnerton-Dyer Conjecture~\ref{CSDconj} is a theorem.
That is why this cubic surface $S$ has no points over field extensions $K/k$ of degree prime to $3$.
It follows that symmetric powers $S^{(n)}$ can not have a~$k$-point when $n$ prime to $3$.
Other symmetric powers $S^{(3n)}$ always have a $k$-point.

Since the existence of a $k$-point is a stable birational invariant, as discussed above,
$S^{(n)}$ is not stably birationally equivalent to $S^{(3m)}$. The scheme $S^{(n_1)}\times\dots\times S^{(n_k)}$ also does not have a $k$-point if some $n_i$ is prime to $3$. 
\end{proof}
\begin{theorem}\label{SBobsth}
Suppose a formula in $\KGro$ for a smooth cubic surface $S$ with some varieties modulo $\L$ has the following form
\begin{equation}\label{eq:false}
  [S^{(3m)}] \equiv \sum_i [S^{(n_1^i)}\times\dots\times S^{(n_{k_i}^i)}]~(mod~\L),
\end{equation}
where for each $i$ some of $n^i_j$ is prime to $3$.
Then this formula is not beautiful, i.e. it can not be true in $\KGro$ for any smooth cubic surface over any field $k$ (may be except some fields of small characteristic).
\end{theorem}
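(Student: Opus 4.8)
The plan is to argue by contradiction using the stable birational realization $\muSB$ together with Theorem~\ref{mod L theorem} (Larsen--Lunts) and Lemma~\ref{SBnonequiv}. Suppose the formula \eqref{eq:false} were beautiful, i.e. held in $\KGro$ for all smooth cubic surfaces $S$ over all fields $k$ (outside possibly some small characteristics). First I would fix the field $k$ and the smooth cubic surface $S$ produced by Lemma~\ref{SBnonequiv}: a cubic surface (e.g. the one over $\Q_p$ with equation $x_0^3 + p x_1^3 + p^2 x_2^3 - a x_3^3$) having no point over any extension $K/k$ of degree prime to $3$, so that $S^{(3m)}$ is not stably birationally equivalent to any product $S^{(n_1)}\times\cdots\times S^{(n_k)}$ in which some $n_j$ is prime to $3$.

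Next I would apply $\muSB$ to the putative identity \eqref{eq:false}, which is already written modulo $\L$, hence descends to an honest equation in $\Z[SB_k]$. The left-hand side is the class $\muSB([S^{(3m)}])$, and the right-hand side is an integer combination of the classes $\muSB([S^{(n_1^i)}\times\cdots\times S^{(n_{k_i}^i)}])$. To invoke Theorem~\ref{mod L theorem} I need the varieties involved to be smooth and projective (symmetric powers of a smooth projective surface are projective but generally singular; however $S^{(n)}$ has only quotient — hence $\L$-type — singularities in the sense of Remark~\ref{rmk:L-sing}, since a resolution such as $S^{[n]}$ satisfies $[S^{[n]}]\equiv[S^{(n)}]\pmod{\L}$ by the G\"ottsche formula \eqref{Gottsche-formula}, cf. Example~\ref{Hilb-Sym234}). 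So Theorem~\ref{mod L theorem} applies after replacing each symmetric power by a smooth projective birational model, and it forces $S^{(3m)}$ to be stably birationally equivalent to one of the products on the right-hand side.

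But every product appearing on the right-hand side has, by hypothesis, at least one factor $S^{(n_j^i)}$ with $n_j^i$ prime to $3$, and Lemma~\ref{SBnonequiv} tells us exactly that such a product is not stably birationally equivalent to $S^{(3m)}$ — concretely, it has no $k$-point while $S^{(3m)}$ does, and the existence of a $k$-point is a stable birational invariant (by Nishimura's lemma / Szab\'o's argument, \cite{Nis55,KS00,KSC04}). This contradiction shows that \eqref{eq:false} cannot hold in $\KGro$, completing the proof.

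The main obstacle is the bookkeeping around singularities of symmetric powers: I must make sure Theorem~\ref{mod L theorem} is legitimately applicable despite $S^{(n)}$ not being smooth, which is handled by the $\L$-singularity observation of Remark~\ref{rmk:L-sing} and the fact that $[S^{[n]}]\equiv[S^{(n)}]\pmod{\L}$; and I must be careful that the coefficients $n_j\in\Z$ allowed in Theorem~\ref{mod L theorem} cause no trouble (they do not, since the conclusion is merely that $X$ is stably birational to *some* $Y_j$ appearing with nonzero coefficient). Everything else is a direct chain of implications.
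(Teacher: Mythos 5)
Your proposal is correct and follows essentially the same route as the paper: apply Theorem~\ref{mod L theorem} to the symmetric powers (legitimate because the G\"ottsche formula shows their singularities are $\L$-singularities in the sense of Remark~\ref{rmk:L-sing}), and derive a contradiction for the specific surface of Lemma~\ref{SBnonequiv}, whose key feature is that $S^{(3m)}$ has a $k$-point while the products with a factor $S^{(n)}$, $n$ prime to $3$, do not. Your write-up is just a more detailed unpacking of the paper's terse argument, including the correct handling of the integer coefficients in the Larsen--Lunts theorem.
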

\begin{proof}
The Hilbert schemes of points on a smooth surface $S$ are smooth. The G\"ottsche formula~\eqref{Gottsche-formula} shows that singularities of symmetric powers $S^{(n)}$ are $\L$-singularities and by Remark~\ref{rmk:L-sing} we can apply Theorem \ref{mod L theorem} to them. 
Consider the cubic surface $S$ from Lemma~\ref{SBnonequiv} then the equation \eqref{eq:false} contradicts to Theorem~\ref{mod L theorem}.
\end{proof}
\begin{corollary}
  The $S$-$Z(S)$ relation \eqref{S-Z(S) formula}, which we will obtain in Section~\ref{sec:smooth} below, is not a beautiful formula in $\KGro$ for smooth cubics $S$ with the LLSvS variety $Z(S)$.
\begin{proof}
 The $S$-$Z(S)$ relation (\ref{S-Z(S) formula}) implies $[S^{(4)}] \equiv [S^{(3)}]~(mod~\L)$.
\end{proof}

\end{corollary}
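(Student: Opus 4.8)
The plan is to reduce the $S$-$Z(S)$ relation~\eqref{S-Z(S) formula} modulo the ideal $(\L)$ and then quote Theorem~\ref{SBobsth}. First I would apply the stable birational realization $\muSB\colon\KGro\to\Z[SB_k]$, whose kernel is $(\L)$. This sends $\L\mapsto 0$, so the left-hand side $\L^4[Z(S)]$ becomes $0$, and on the right-hand side every monomial carrying a positive power of $\L$ vanishes. Scanning~\eqref{S-Z(S) formula}, the only surviving terms are $[S^{(4)}]$ and the constant term $-[S^{(3)}]$ coming from the coefficient $-(1-\L+\L^2)$ of $[S^{(3)}]$; all of $\L[S][S^{(2)}]$, $(\L+\L^2+\L^3)[S^2]$, $2\L^2[S^{(2)}]$, $(\L-\L^2+\L^3-\L^4+\L^5)[S]$ and the constant $\L^2+\L^4+\L^6$ lie in $(\L)$. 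Hence~\eqref{S-Z(S) formula}, were it true in $\KGro$, would force the congruence
$$[S^{(4)}]\equiv [S^{(3)}] \pmod{\L}.$$

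Second, I would observe that this congruence is precisely an instance of the forbidden shape~\eqref{eq:false} in Theorem~\ref{SBobsth}: put $3m=3$ (i.e.\ $m=1$) on the left, and take the right-hand sum to be the single term $[S^{(4)}]$, whose exponent $4$ is prime to $3$. Theorem~\ref{SBobsth} then applies verbatim and yields the claim. For completeness I would recall why the conclusion holds in this case: on the explicit cubic surface $S$ over $\Q_p$ from Lemma~\ref{SBnonequiv} — which has no point over any field extension of degree prime to $3$ — the symmetric power $S^{(4)}$ has no $k$-point while $S^{(3)}$ always does, so they are not stably birationally equivalent; since the singularities of symmetric powers of a smooth surface are $\L$-singularities by the G\"ottsche formula~\eqref{Gottsche-formula} and Remark~\ref{rmk:L-sing}, the displayed congruence contradicts Theorem~\ref{mod L theorem}.

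There is essentially no obstacle here: once the $S$-$Z(S)$ relation is available, the only work is the mechanical check that its reduction modulo $\L$ equals $[S^{(4)}]-[S^{(3)}]$, after which Theorem~\ref{SBobsth} closes the argument. The conceptual input — that $S^{(4)}$ and $S^{(3)}$ can fail to be stably birationally equivalent because of a $4$-versus-$3$ obstruction detected by $\Q_p$-points — has already been isolated in Lemma~\ref{SBnonequiv} and Theorem~\ref{SBobsth}, so the corollary is an immediate consequence.
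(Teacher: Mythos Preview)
Your proposal is correct and follows exactly the paper's approach: reduce \eqref{S-Z(S) formula} modulo $\L$ to obtain $[S^{(4)}]\equiv[S^{(3)}]\pmod{\L}$, then invoke Theorem~\ref{SBobsth} (with $3m=3$ and the single term $[S^{(4)}]$ on the other side). The paper's proof is the one-line version of what you wrote.
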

\begin{example}\label{nottrue}
  The relation (\ref{Homrel5}) from Section~\ref{sec:smooth} would imply
        $$[S^{(5)}] + [S\times S^{(3)}] \equiv [S\times S^{(4)}] + [S^{(3)}]~(mod~\L).$$
By Theorem~\ref{SBobsth} this relation is not a homogeneous beautiful formula in $\KGro$ for smooth cubics $S$.
\end{example}

In the case of singular cubic surfaces the situation with the existence of $k$-points is a bit different. For example in the case of singularity of type $A_1$ on $S$ always exists a $k$-point. 
\begin{proposition}
  Let $S$ be a singular cubic surface with one rational double point singularity of type $A_1$ such that~$\Delta = 0$, see Lemma~\ref{lem:SA1Gro} over a field of characteristic zero. Then~$S$,~$S^{(2)}$,~$S^{(3)}$ and $S^{(4)}$ are stably birationally equivalent.
\end{proposition}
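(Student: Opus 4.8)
The plan is to show that each of $S$, $S^{(2)}$, $S^{(3)}$ and $S^{(4)}$ has $\L$-singularities and becomes equal to $[\Spec k]$ in $\KGro/\L$; then Theorem~\ref{mod L theorem} (applied through Remark~\ref{rmk:L-sing}) shows that each of them is stably birationally equivalent to $\Spec k$, and transitivity gives the claim. The only real input is the class formula of Lemma~\ref{lem:SA1Gro}, which under the hypothesis $\Delta = 0$ reads $[S] = 1 + [A(S)]\L + \L^2$ in $\KGro$, where $A(S)$ is a zero-dimensional scheme.

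First I would compute the classes $[S^{(n)}]$ modulo $\L$ using the pre-$\lambda$-ring structure on $\KGro$. By multiplicativity of the Kapranov zeta function, $Z_{Kap}(S,t) = Z_{Kap}(1,t)\cdot Z_{Kap}([A(S)]\L,t)\cdot Z_{Kap}(\L^2,t)$. Here $Z_{Kap}(1,t) = \sum_{n\ge 0}t^n$ because $\Sym^n(\Spec k) = \Spec k$; moreover $\Sym^n(\L^2) = \L^{2n}$ and $\Sym^n([A(S)]\L) = \L^n\Sym^n([A(S)])$, so every coefficient of $Z_{Kap}(\L^2,t)$ and of $Z_{Kap}([A(S)]\L,t)$ in positive degree is divisible by $\L$. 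Hence $Z_{Kap}(S,t)\equiv \sum_{n\ge 0}t^n \pmod{\L}$, that is, $[S^{(n)}]\equiv 1 \pmod{\L}$ for every $n$, in particular for $n = 1,2,3,4$.

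Next I would check that $S$ and the $S^{(n)}$ have $\L$-singularities. For $S$ itself, the minimal resolution $\tilde S\to S$ replaces the $A_1$ point by the exceptional conic, which is the smooth conic $C$ of Lemma~\ref{lem:SA1Gro}; since $\Delta = 0$ gives $[C] = \P^1$, we get $[\tilde S] = [S] - 1 + \P^1 = [S] + \L$, so $[\tilde S]\equiv [S] \pmod{\L}$. For the symmetric powers, multiplicativity of $Z_{Kap}$ together with $[\tilde S] = [S] + \L$ yields $Z_{Kap}(\tilde S,t)\equiv Z_{Kap}(S,t) \pmod{\L}$, hence $[\tilde S^{(n)}]\equiv [S^{(n)}] \pmod{\L}$; since $\tilde S$ is a smooth surface, the G\"ottsche formula~\eqref{Gottsche-formula} shows that $\tilde S^{(n)}$ has $\L$-singularities (exactly as in the proof of Theorem~\ref{SBobsth}); and since $\tilde S^{(n)}\to S^{(n)}$ is proper birational, a smooth projective resolution of $\tilde S^{(n)}$ is also one of $S^{(n)}$. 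Chasing these congruences gives $[\widetilde{S^{(n)}}]\equiv [S^{(n)}] \pmod{\L}$, i.e. $S^{(n)}$ has $\L$-singularities.

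Finally I would invoke Theorem~\ref{mod L theorem} with $X = S^{(n)}$ (for $n = 1,2,3,4$), $m = 1$, $Y_1 = \Spec k$ and $n_1 = 1$: from $[S^{(n)}] = [\Spec k]$ in $\KGro/\L$ it follows that $S^{(n)}$ is stably birationally equivalent to $\Spec k$, and therefore $S$, $S^{(2)}$, $S^{(3)}$, $S^{(4)}$ are pairwise stably birationally equivalent. The step I expect to require the most care is the $\L$-singularity bookkeeping of the third paragraph: $S^{(n)}$ is singular both along the image of the node of $S$ and along the diagonals, so one must see that both contributions are controlled simultaneously, which is precisely what passing to $\tilde S^{(n)}$ — the symmetric power of the minimal resolution — accomplishes. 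Once this is in place, nothing beyond the Larsen--Lunts theorem and the elementary computation of the second paragraph is needed.
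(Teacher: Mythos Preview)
Your proof is correct and in fact more direct than the paper's. Both arguments handle the $\L$-singularity bookkeeping the same way (pass to $\tilde S^{(n)}$ and use G\"ottsche as in the proof of Theorem~\ref{SBobsth}), but they differ in how the congruences modulo $\L$ are obtained. The paper chains together three separate relations: the $Y$--$F(Y)$ relation~\eqref{The Y-F(Y) relation Sym} for $[S]\equiv[S^{(2)}]$, relation~\eqref{eq:SZS3A1} for $[S^{(2)}]\equiv[S^{(3)}]$, and relation~\eqref{eq:SS4A1} for $[S^{(3)}]\equiv[S^{(4)}]$, each taken modulo $\L$. You instead read off $[S^{(n)}]\equiv 1\pmod{\L}$ for all $n$ directly from $[S]=1+[A(S)]\L+\L^2$ via multiplicativity of $Z_{Kap}$. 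Your route avoids any appeal to the Section~\ref{A1} identities and yields the slightly stronger statement that each $S^{(n)}$ is stably birational to a point (i.e.\ stably rational), not merely mutually equivalent; the paper's route, on the other hand, exhibits the proposition as a consequence of the degree-$3$ and degree-$4$ formulae that are the main content of that section.
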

\begin{proof}  
  The relation \eqref{eq:SZS3A1} from Section~\ref{Singular cubic surfaces} in $\KGro$ implies:
\begin{equation}\label{eq:S3-S2-A1}
        [S^{(3)}]\cong [S^{(2)}]~(mod~\L).
\end{equation}
The relation (\ref{eq:SS4A1}) in $\KGro$ implies:
\begin{equation}\label{eq:S4-S3-A1}
        [S^{(4)}]\cong [S^{(3)}]~(mod~\L).
\end{equation}

Consider a resolution of singularities $\tilde{S}$ of $S$.
In $\KGro$ we have $[\tilde{S}]=[S] + \L$.
It follows that~$[S^{(n)}] \equiv [\tilde{S}^{(n)}]~(mod~\L)$. 
This allows to apply Theorem~\ref{mod L theorem} to \eqref{eq:S3-S2-A1}, \eqref{eq:S4-S3-A1}.
Using also \hbox{the~$Y$-$F(Y)$} relation (\ref{The Y-F(Y) relation Sym}) we see that
$S$, $S^{(2)}$, $S^{(3)}$ and $S^{(4)}$ are stably birationally equivalent.
\end{proof}

\section{When $\L = 1$. Categorical heuristics} \label{Speculations for cubic fourfolds}
In this section we consider the \emph{categorical realization homomorphism}, Section~\ref{sec:catreal}:
$$\muCat\colon \KGro \rightarrow \KCat.$$
It sends the class $[X_k]$ of a smooth projective variety to the class $[\D(X)]$ of the bounded derived category of coherent sheaves on $X$.
Note that $\muCat(\L)=1$.
A semi-orthogonal decomposition of any category $\CC$ gives a relation in $\KCat$. 
A general heuristic is that such a relation can be lifted to a relation in the Grothendieck ring of varieties $\KGro$.
Using this for cubic fourfolds we will obtain some hypothetical form of the $Y$-$Z(Y)$ relation.

Let $Y\subset \P^5$ be a smooth cubic fourfold. 
There are deep connections between cubic fourfolds and K3 surfaces.
First of all, on the level of Hodge diamonds we have the following decomposition: 
\renewcommand{\arraystretch}{0.5}
\newcolumntype{x}[1]{>{\centering}p{#1}}
\newcommand{\ms}{10pt}
\begin{equation*}
        \begin{array}{x{\ms}@{}x{\ms}@{}x{\ms}@{}x{\ms}@{}x{\ms}@{}x{\ms}@{}x{\ms}@{}x{\ms}@{}x{\ms}@{}p{0pt}}
                     &&&& 1 &&&&        &\\
                    &&& 0 && 0 &&&      &\\
                  && 0 && 1 && 0 &&     &\\
                 & 0 && 0 && 0 && 0 &   &\\
                0 && 1 && 21 && 1 && 0  &\\
                 & 0 && 0 && 0 && 0 &   &\\
                  && 0 && 1 && 0 &&     &\\
                    &&& 0 && 0 &&&      &\\
                     &&&& 1 &&&&        &
        \end{array} 
        = 
        \begin{array}{x{\ms}@{}x{\ms}@{}x{\ms}@{}x{\ms}@{}x{\ms}@{}p{0pt}}
             && 1 &&       &\\
            & 0 && 0 &     &\\
           1 && 20 && 1    &\\
            & 0 && 0 &     &\\
             && 1 &&       & 
        \end{array}
        +
        \begin{array}{x{\ms}@{}x{\ms}@{}x{\ms}@{}x{\ms}@{}x{\ms}@{}x{\ms}@{}x{\ms}@{}x{\ms}@{}x{\ms}@{}p{0pt}}
                     &&&& 1 &&&&        &\\
                    &&& 0 && 0 &&&      &\\
                  && 0 && 0 && 0 &&     &\\
                 & 0 && 0 && 0 && 0 &   &\\
	        0 && 0 && 1  && 0 && 0~,&\\
                 & 0 && 0 && 0 && 0 &   &\\
                  && 0 && 0 && 0 &&     &\\
                    &&& 0 && 0 &&&      &\\
                     &&&& 1 &&&&        &
        \end{array}
\end{equation*}
where the small Hodge diamond is the Hodge diamond of a K3 surface. 

Moreover, smooth cubic fourfolds are linked to K3 surfaces via their Hodge structures. 
The Hodge structure $H^2(\Kthree,\C)^{prim}(-1)$ is isomorphic to $K_d^{\perp}\subset H^4(Y, \C)$ for some special labelled cubic fourfolds $(Y,K_d)$ and some polarized K3 surfaces $\Kthree$, see \cite{Ha00}.   
In this case, it is said that the K3 surface is \emph{associated} to the special cubic fourfold.

There exist also some connections between K3 surfaces and moduli spaces of rational curves on cubic fourfolds:
\begin{theorem}\label{Hilb2K3} \cite{BD85, Ha00}
        Let $Y$ be a smooth cubic hypersurface in $\P^5_{\C}$. The Fano variety~$F(Y)$ of lines on $Y$ is a smooth four-dimensional holomorphically symplectic variety which is deformation equivalent to the second Hilbert scheme of a K3 surface.
        If $Y$ is a Pfaffian cubic fourfold not containing a plane with associated K3 surface $\Kthree$ not containing a line then~$F(Y)\cong \Hilb^2(\Kthree)$.
\end{theorem}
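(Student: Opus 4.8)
The plan, following Beauville--Donagi~\cite{BD85} (with Hassett's refinement~\cite{Ha00} for the statement involving the associated K3 surface), is to realise $F(Y)$ as a zero locus inside a Grassmannian, to equip it with a holomorphic symplectic form coming from the Hodge structure on $H^4(Y)$, and to pin down its deformation type via the explicit Pfaffian family. \emph{Step 1 (smoothness).} First I would describe $F(Y)$ as the zero locus on $G(2,6)$ (lines in $\P^5$) of the section of $\Sym^3\mathcal U^\vee$ cut out by the defining cubic form, where $\mathcal U$ is the tautological rank-$2$ subbundle. For every line $\ell\subset Y$ the normal bundle $N_{\ell/Y}$ is isomorphic to $\CO_\ell(1)\oplus\CO_\ell\oplus\CO_\ell$ or to $\CO_\ell(1)\oplus\CO_\ell(1)\oplus\CO_\ell(-1)$, and in both cases $H^1(\ell,N_{\ell/Y})=0$ with $h^0(\ell,N_{\ell/Y})=4$; hence $F(Y)$ is everywhere smooth of dimension $4$, and Bott vanishing on $G(2,6)$ applied to the Koszul complex of the section gives $H^i(F(Y),\CO)=0$ for $i=1,3$, so $h^{1,0}(F(Y))=0$.

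\emph{Step 2 (the symplectic form).} Let $P\subset F(Y)\times Y$ be the universal line, with projections $p\colon P\to F(Y)$ and $q\colon P\to Y$, and put $\alpha:=p_*q^*\colon H^4(Y,\Z)\to H^2(F(Y),\Z)$. Since $H^4(Y,\C)$ has $h^{3,1}=h^{1,3}=1$, the Tate twist $H^4_{\mathrm{prim}}(Y)(1)$ is a Hodge structure of K3 type, and I would show that $\alpha$ is a Tate-twisted morphism of Hodge structures carrying $H^4_{\mathrm{prim}}(Y,\Z)$ isomorphically onto $H^2_{\mathrm{prim}}(F(Y),\Z)$, the part primitive with respect to the Plücker class. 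Granting this, $h^{2,0}(F(Y))=h^{3,1}(Y)=1$, so $F(Y)$ carries a nonzero holomorphic $2$-form $\sigma$; under $T_{[\ell]}F(Y)=H^0(\ell,N_{\ell/Y})$ this $\sigma$ is given fibrewise by a natural pairing built from the wedge product $N_{\ell/Y}\otimes N_{\ell/Y}\to\det N_{\ell/Y}=\CO_\ell(1)$ and Serre duality on $\ell\cong\P^1$, and I would prove it is non-degenerate. Together with Step 1 this makes $F(Y)$ holomorphically symplectic of dimension $4$, with $b_2(F(Y))=1+22=23$.

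\emph{Step 3 (deformation type and the Pfaffian model).} The coarse moduli space of smooth cubic fourfolds is irreducible and contains the Pfaffian locus, and the universal Fano variety is a smooth proper family over it; hence $F(Y)$ is deformation equivalent to $F(Y_W)$ for a Pfaffian cubic fourfold $Y_W$, and anything deformation equivalent to an irreducible holomorphic symplectic manifold is itself one. Writing $Y_W=\P(W)\cap\{\mathrm{Pf}=0\}$ with $W\subset\bigwedge^2V^\vee$ generic of dimension $6$ and $\dim V=6$, the associated K3 surface is $\Kthree:=G(2,V)\cap\P(W^\perp)$, a general linear section of the Plücker embedding of $G(2,6)$, of degree $14$. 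Fano's classical construction yields an explicit birational map $\Kthree^{[2]}\dashrightarrow F(Y_W)$, built from the linear-algebra incidences between $W$ and $W^\perp$ (equivalently, from the $2$-dimensional kernels of the pencils of rank-$4$ skew forms parametrised by a line on $Y_W$), and one checks it is a bijective morphism of smooth fourfolds, hence an isomorphism; this establishes the deformation-equivalence statement with a K3 of $K3^{[2]}$-type. When in addition $\Kthree$ contains no line, the spanning construction entering the correspondence acquires no exceptional locus, so one gets the sharp conclusion $F(Y)\cong\Hilb^2(\Kthree)$.

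\emph{Main obstacle.} The hardest part will be the two claims in Step 2 — that $\alpha$ is an isomorphism of Tate-twisted Hodge structures and that $\sigma$ is non-degenerate. My approach would be to verify both on the Pfaffian family of Step 3, where $\alpha$ becomes the evident inclusion $H^2(\Kthree)_{\mathrm{prim}}\hookrightarrow H^2(\Kthree^{[2]})$ and $\sigma$ is visibly the symplectic form of $\Kthree^{[2]}$, and then to propagate them to every smooth cubic fourfold by the irreducibility of the parameter space together with a monodromy/local-constancy argument for the relative cohomology of the family $\{F(Y_t)\}$. A second delicate point is checking that the Fano correspondence in Step 3 is genuinely an isomorphism and not merely a birational map, which is exactly where the line-free hypothesis on $\Kthree$ enters.
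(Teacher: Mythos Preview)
The paper does not prove this theorem; it is quoted from the literature with the citations \cite{BD85, Ha00} and used only as motivation for Assumption~\ref{asm:K3} and Conjecture~\ref{conj:GS}. So there is no ``paper's own proof'' to compare against.

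Your sketch is a faithful outline of the Beauville--Donagi argument in \cite{BD85}: realise $F(Y)$ as the zero locus of a section of $\Sym^3\mathcal U^\vee$ on $G(2,6)$ to get smoothness and dimension~$4$; use the incidence correspondence to transport the $(3,1)$-class on $Y$ to a holomorphic $2$-form on $F(Y)$; and identify the deformation type by exhibiting the Pfaffian isomorphism $F(Y_W)\cong\Kthree^{[2]}$ and invoking irreducibility of the moduli of cubic fourfolds. Your diagnosis of the main difficulty (injectivity and Hodge compatibility of the Abel--Jacobi map, and nondegeneracy of $\sigma$) and your proposed cure (check on the Pfaffian locus, propagate by local constancy) is exactly what \cite{BD85} does.

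One small slip: the pairing you describe in Step~2, ``$N_{\ell/Y}\otimes N_{\ell/Y}\to\det N_{\ell/Y}$'', is not a map of bundles when $\operatorname{rk}N_{\ell/Y}=3$; the wedge product lands in $\bigwedge^2 N_{\ell/Y}\cong N_{\ell/Y}^\vee\otimes\det N_{\ell/Y}$, not in $\det N_{\ell/Y}$. In practice Beauville--Donagi do not argue fibrewise here: they deduce nondegeneracy of $\sigma$ from the Pfaffian model (where $\sigma$ is manifestly the symplectic form on $\Kthree^{[2]}$) and then from the fact that nondegeneracy is an open condition in families. If you want a direct fibrewise argument you will need a different construction of the pairing on $H^0(\ell,N_{\ell/Y})$; as written that step does not go through.
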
 
\begin{theorem}\label{Hilb4K3} \cite{AL14},\cite{Le15}
        Let $Y\subset\P^5_{\C}$ be a smooth cubic hypersurface that does not contain a plane.
	The LLSvS variety $Z(Y)$, Theorem~\ref{LLSvS theorem}, is a smooth eight-dimensional holomorphically symplectic variety which is deformation equivalent to the fourth Hilbert scheme of a K3 surface. In the case of a Pfaffian cubic fourfold not containing a plane with the associated K3 surface~$\Kthree$ not containing a line $Z(Y)$ is birational to $\Hilb^4(\Kthree)$.
\end{theorem}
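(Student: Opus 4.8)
Both the source and the target are already known to be irreducible holomorphic symplectic (IHS) manifolds of dimension $8$: for $Z(Y)$ this is part of Theorem~\ref{LLSvS theorem}, and $\Hilb^4(\Kthree)$ is the classical Beauville example. So the content is (i) the identification of the deformation type of $Z(Y)$, and (ii) the existence of a birational map between $Z(Y)$ and $\Hilb^4(\Kthree)$ for Pfaffian $Y$. My plan is to prove (ii) first and deduce (i) from it. The parameter space $U$ of smooth cubic fourfolds in $\P^5_{\C}$ not containing a plane is a Zariski-open subset of $\P(\Sym^3(\C^6)^{\vee})$, hence connected, and the LLSvS construction of Theorem~\ref{LLSvS theorem} globalises to a smooth proper morphism $\mathcal{Z}\to U$ with fibres the $Z(Y)$. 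By Ehresmann's theorem all fibres are deformation equivalent, so the deformation type of $Z(Y)$ is determined by that of a single Pfaffian $Z(Y_0)$; since birational IHS manifolds are deformation equivalent (Huybrechts), (ii) forces this common deformation type to be that of $\Hilb^4(\Kthree)$, which is (i).

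For (ii), I would fix a $6$-dimensional vector space $V$ and a generic $6$-dimensional subspace $W\subset \Lambda^2 V^{\vee}$, so that $Y=\{[\phi]\in\P(W):\operatorname{rk}\phi\le 4\}$ is a smooth Pfaffian cubic fourfold without a plane and $\Kthree=\Gr(2,V)\cap\P(W^{\perp})$ is the associated K3 surface of degree $14$, which we assume contains no line. By Kuznetsov's theorem there is a Fourier--Mukai equivalence $\CA_Y\simeq\D(\Kthree)$ between the K3-type (Kuznetsov) component $\CA_Y\subset\D(Y)$ and the derived category of $\Kthree$. The plan is to realise $Z(Y)$ as a moduli space of objects of $\CA_Y$: to a generalized twisted cubic $C$ lying on a cubic surface of the form $\Pi\cap Y$ with $\Pi\cong\P^3$, I would attach a two-term complex $F_C$ on $Y$ built from the sheafified ACM/linear resolution of $C$ inside $\Pi$, and then take its projection $\pi(F_C)\in\CA_Y$. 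One then checks that $\pi(F_C)$ depends only on the corresponding point of $Z(Y)$: the whole $\P^2$ of twisted cubics in a fixed linear system on $\Pi\cap Y$ gives the same object, and the twisted cubics of non-CM type account exactly for the locus contracted onto $Y\subset Z(Y)$ in the two-step contraction of Theorem~\ref{LLSvS theorem}. Hence the assignment descends to a morphism $Z(Y)\to\mathcal{M}$ into the relevant moduli stack of objects of $\CA_Y$.

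Transporting through $\CA_Y\simeq\D(\Kthree)$, and after a stability check (Gieseker-stability for a suitable polarisation, or Bridgeland-stability for a stability condition in the appropriate chamber), the objects $\pi(F_C)$ become sheaves on $\Kthree$ with a fixed Mukai vector $v$, and a Chern-character computation shows $v$ is primitive with $v^2=6$. For a $v$-generic polarisation the moduli space $M_{\Kthree}(v)$ of such sheaves is then a smooth projective IHS manifold of dimension $v^2+2=8$, deformation equivalent to $\Hilb^4(\Kthree)$ (Mukai, O'Grady, Yoshioka). The induced morphism $Z(Y)\to M_{\Kthree}(v)$ is a morphism of IHS eightfolds; checking that it is generically injective (generic twisted cubics yield non-isomorphic stable sheaves) shows it is birational, hence an isomorphism, since a birational morphism between IHS manifolds is an isomorphism. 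Finally, because $\Kthree$ contains no line, a Fourier--Mukai autoequivalence of $\D(\Kthree)$ carries $v$ to the Mukai vector $(1,0,-3)$ of ideal sheaves of length-$4$ subschemes (the relevant wall in the space of stability conditions is not crossed), so $M_{\Kthree}(v)$ is birational to $\Hilb^4(\Kthree)$, whence so is $Z(Y)$.

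The main obstacle is the middle step of (ii): attaching to a generalized twisted cubic the correct object $F_C$ so that its projection to $\CA_Y$ has a Mukai vector which is constant in families, and matching the fibres of $Z(Y)\to\mathcal{M}$ with the LLSvS contraction. This requires a delicate local study of the resolutions of possibly degenerate, non-CM generalized twisted cubics, together with a stability argument guaranteeing that the image lands in an honest moduli space of sheaves rather than merely in a moduli stack of complexes. By comparison, the reduction of (i) to (ii) and the appeal to the standard structure theory of moduli spaces of sheaves on K3 surfaces are essentially formal.
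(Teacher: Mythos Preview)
The paper does not prove this theorem: it is quoted from \cite{AL14,Le15} as background input, so there is no ``paper's own proof'' to compare against. Your sketch is therefore being measured against the cited references, and here the attribution is off. The argument you outline is essentially the derived-categorical one of Lahoz--Lehn--Macr\`i--Stellari (realising $Z(Y)$ as a moduli space of stable objects in $\CA_Y$ and then transporting via $\CA_Y\simeq\D(\Kthree)$), not the argument of \cite{AL14}. Addington and Lehn construct the birational map \emph{geometrically}: a point of $\Kthree=\Gr(2,V)\cap\P(W^{\perp})$ is a $2$-plane $P\subset V$; to four general such planes $P_1,\dots,P_4$ one associates the $4$-dimensional subspace $U\subset V^{\vee}$ of linear forms vanishing on all of them, and the quartic surface $\Kthree\cap\P(U^{\perp})$ breaks as a twisted cubic plus a line, landing in $Z(Y)$. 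This gives a rational map $\Hilb^4(\Kthree)\dashrightarrow Z(Y)$ which they show is birational by a direct dimension and degree count. Your reduction of (i) to (ii) via connectedness of the parameter space, Ehresmann, and Huybrechts' theorem is correct and is indeed how deformation equivalence is deduced.

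On the internal logic of your sketch: the step ``the induced morphism $Z(Y)\to M_{\Kthree}(v)$ is a morphism of IHS eightfolds; \dots\ birational, hence an isomorphism'' is slippery. You first have to show that $C\mapsto \pi(F_C)$ globalises to a \emph{regular} morphism on all of $Z(Y)$, including over the non-CM locus and over the embedded copy of $Y$; only then can you invoke that a birational morphism of IHS manifolds is an isomorphism. In the literature this is exactly the hard part and requires a careful analysis of the families of resolutions, together with a genuine stability argument to guarantee the target is a fine moduli space. Also, the final step (``a Fourier--Mukai autoequivalence carries $v$ to $(1,0,-3)$ because $\Kthree$ contains no line'') is not automatic: one needs $\Kthree$ to have Picard rank one (or at least a suitable numerical condition) to identify $M_{\Kthree}(v)$ birationally with $\Hilb^4(\Kthree)$; the hypothesis ``contains no line'' alone does not obviously suffice. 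So the sketch is a reasonable roadmap for a proof, but it is not the proof in \cite{AL14,Le15}, and the two places you flag as ``formal'' or dispatch in a line are in fact where the work lies.
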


There exists a semiorthogonal decomposition, \cite{Kuz08}:
$$ \mathcal{D}(Y) = \langle \mathcal{A}_Y, \CO_Y, \CO_Y(1), \CO_Y(2) \rangle.$$
By many features the category $\CA_Y$ looks like the derived category of a K3 surface and for some cubic fourfolds $\CA_Y$ is equivalent to the derived category of a K3 surface, \cite{Kuz08}.

In Example~\ref{SymCat} we recalled how to express $\muCat([X^{(n)}])$ through the classes of symmetric powers of $\muCat([X])\cong [\D(X)]$.
Let us evaluate the $Y$-$F(Y)$ relation (\ref{The Y-F(Y) relation Sym}) in $\KCat$:
$$ \muCat([Y^{(2)}]) = \Sym^2_{GK}([\D(Y)])-[\D(Y)] = 2 [\D(Y)] + [\D(F(Y))],$$
$$ [\D(Y)] = [\CA_Y] + 3,$$
$$ \Sym^2_{GK}([\CA_Y] + 3) - ([\CA_Y] + 3) = 2([\CA_Y] + 3) + [\D(F(Y))].$$
We can expand the symmetric square using Axiom~\eqref{eq:lambdasum} in the definition of a pre-$\lambda$-structure. It follows that
$$ [\Sym^2_{GK}\CA_Y] = [\D(F(Y))].$$ 

\begin{conjecture}\emph{[Sergey Galkin]}\label{conj:GS}
        Let $Y$ be a smooth cubic fourfold. 
        \begin{enumerate}
                \item The derived category $\D(F(Y))$ is equivalent to $\Sym^2_{GK}\CA_Y$.
                \item The derived category $\D(Z(Y))$ is equivalent to $\Sym^4_{GK}\CA_Y$. 
        \end{enumerate}
\end{conjecture}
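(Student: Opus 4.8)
The excerpt already establishes, in $\KCat$, the $K$-theoretic shadow of part~(1): evaluating the $Y$-$F(Y)$ relation under $\muCat$ gives $[\D(F(Y))]=[\Sym^2_{GK}\CA_Y]$, and, granting the conjectural $Y$-$Z(Y)$ relation of Conjecture~\ref{conj:fourfold}, one likewise gets $[\D(Z(Y))]=[\Sym^4_{GK}\CA_Y]$ in $\KCat$. So the content of Conjecture~\ref{conj:GS} is a \emph{categorification}: lifting an equality of classes of dg-categories to an honest quasi-equivalence. The plan is two-step: first I would prove the equivalence for the Pfaffian family of cubic fourfolds, where $\CA_Y$ is genuinely $\D(\Kthree)$ for an honest K3 surface and $F(Y)$, $Z(Y)$ are (birational to) Hilbert schemes of points of $\Kthree$; then I would propagate the equivalence over the irreducible moduli space of smooth cubic fourfolds by a relative Fourier--Mukai argument.

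For the base case: if $Y$ is Pfaffian, not containing a plane, with associated K3 surface $\Kthree$ not containing a line, then $\CA_Y\cong\D(\Kthree)$ by Kuznetsov \cite{Kuz08}, while $F(Y)\cong\Hilb^2(\Kthree)$ by Theorem~\ref{Hilb2K3} and $Z(Y)$ is birational to $\Hilb^4(\Kthree)$ by Theorem~\ref{Hilb4K3}. Using the Bridgeland--King--Reid--Haiman equivalence $\D(\Hilb^n(\Kthree))\cong\D_{\S_n}((\Kthree)^n)$ with the $\S_n$-equivariant derived category, the base case reduces to an identification, for any smooth projective $X$, of the Ganter--Kapranov power $\Sym^n_{GK}\D(X)$ \cite{GK14} with $\D_{\S_n}(X^n)$: for $n=2$ this is close to the definition, and for $n=4$ it is a compatibility statement between the two constructions that I would check directly from their combinatorics. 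Granting it, part~(1) for Pfaffian $Y$ is the chain $\D(F(Y))\cong\D(\Hilb^2(\Kthree))\cong\D_{\S_2}((\Kthree)^2)\cong\Sym^2_{GK}\D(\Kthree)\cong\Sym^2_{GK}\CA_Y$, and part~(2) is the same chain with $n=4$, provided one also knows that $Z(Y)$ is derived-equivalent to $\Hilb^4(\Kthree)$ --- Theorem~\ref{Hilb4K3} only supplies a birational map of hyperk\"ahler eightfolds, so here one additionally invokes the (known in many cases) principle that birational hyperk\"ahler manifolds are derived-equivalent.

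For the propagation step, the difficulty is that derived equivalences are not deformation-invariant on the nose, so one cannot simply perturb a single $Y$. Instead I would work relatively over the moduli space $\mathcal{M}$ of smooth cubic fourfolds: the categories $\CA_Y$ assemble into a smooth proper family of \emph{noncommutative K3 surfaces} over $\mathcal{M}$, and there are smooth families $\mathcal{F}\to\mathcal{M}$, $\mathcal{Z}\to\mathcal{M}$ with fibres $F(Y)$, $Z(Y)$. One then (a) makes sense of the relative Ganter--Kapranov power $\Sym^n_{GK}$ of the family $\{\CA_Y\}$ as a family of dg-categories over $\mathcal{M}$, (b) produces a relative Fourier--Mukai kernel inducing a functor from it to $\D(\mathcal{F}/\mathcal{M})$, resp.\ $\D(\mathcal{Z}/\mathcal{M})$ --- the universal incidence of lines, resp.\ of twisted cubics, on $Y$ being the natural source of such a kernel --- and (c) observes that the locus of $\mathcal{M}$ over which this kernel restricts fibrewise to a quasi-equivalence is open and closed; since it contains the Pfaffian locus by the base case and $\mathcal{M}$ is irreducible, it is all of $\mathcal{M}$.

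I expect steps (a)--(b) of the propagation to be the main obstacle: defining $\Sym^n_{GK}$ of a family of noncommutative varieties, and especially writing down an explicit relative kernel that realises it geometrically. For part~(1) there is concrete hope, since the Fano correspondence between $Y$ and $F(Y)$ already underlies Kuznetsov's semiorthogonal decomposition of $\D(F(Y))$ in terms of $\CA_Y$, so the kernel should be assembled from that data. For part~(2) the obstacles compound: one must control the contraction $u\colon M_3(Y)\to Z(Y)$ of Theorem~\ref{LLSvS theorem} (a $\P^2$-bundle only after a blow-up), and the comparison with $\Hilb^4(\Kthree)$ is merely birational, so that even the Pfaffian base case rests on the birational-hyperk\"ahler derived-equivalence conjecture. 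These are precisely the reasons the statement is still open; the most plausible first progress is part~(1) in the Pfaffian case, followed by the relative argument above.
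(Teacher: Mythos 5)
This statement is a \emph{conjecture} in the paper: the author gives no proof, only evidence. The evidence recorded there is exactly your base case for part~(1): for a Pfaffian cubic fourfold (no plane, associated K3 without a line) one has $\CA_Y\cong\D(\Kthree)$ by Kuznetsov, $F(Y)\cong\Hilb^2(\Kthree)$, the derived McKay correspondence $\D(\Hilb^n(\Kthree))\cong\D_{\S_n}(\Kthree^n)$, and the Ganter--Kapranov identification of the latter with $\Sym^n_{GK}\D(\Kthree)$; part~(2) is motivated only by the deformation/birational statement of Theorem~\ref{Hilb4K3}. So up to that point you are reproducing the paper's known special case, not proving the conjecture.

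Beyond that, your proposal has genuine gaps and you concede it does not close the statement. First, in the Pfaffian base case for part~(2), Theorem~\ref{Hilb4K3} gives only a birational map $Z(Y)\dashrightarrow\Hilb^4(\Kthree)$, and the ``principle'' that birational hyperk\"ahler manifolds are derived equivalent is itself an open conjecture (the D-equivalence conjecture); it is known only in special geometric situations such as Mukai flops, and you give no argument that this particular birational map is of such a type, so even the base case of (2) is conditional. Second, the propagation step does not work as stated: the locus in the moduli space where a fixed relative Fourier--Mukai kernel restricts to a fibrewise equivalence is open, but there is no reason it is closed, so irreducibility of $\mathcal{M}$ buys nothing; moreover you have not defined the relative $\Sym^n_{GK}$ of the family of noncommutative K3 categories, nor exhibited the kernel, which you yourself identify as the main obstacles. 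In short: your text is a plausible strategy sketch whose solid portion coincides with the paper's evidence for part~(1) in the Pfaffian case, but it is not a proof, and none should be expected --- the statement is open and is presented as such in the paper.
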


The first conjecture for certain special cubics (Pfaffian cubics) follows from Theorem~\ref{Hilb2K3}.
In~\cite{Kuz06} it was shown that $\CA_Y \cong \D(\Kthree)$ in the case of a Pfaffian cubic.
The derived category $\D(\Hilb^n(\Kthree))$ is equivalent to the derived category $\D_{\S_n}(\Kthree^n)$ of $\S_n$-equivariant coherent sheaves on $\Kthree^n$ by the derived McKay correspondence \cite{Hai99}, \cite{BKR01}, \cite{Kru17}.
And the later category is equivalent to $\Sym^n_{GK}(\D(\Kthree))$, \cite{GK14}.
The second part of the conjecture is motivated by Theorem~\ref{Hilb4K3}. 

\begin{remark}
        In the talk \cite{Gal17} Sergey Galkin explained the strategy to attack this conjecture through gauge linear sigma models.
\end{remark}

Theorems~\ref{Hilb2K3},~\ref{Hilb4K3} and Conjecture~\ref{conj:GS} motivate the following assumption.
\begin{assumption}\label{asm:K3}
  Let $Y$ be a smooth cubic fourfold. Assume that in $\KGro$ we have 
  \begin{align*}
    [Y]    &= \L[\Kthree] + 1 + \L^2 + \L^4, \\
    [F(Y)] &= [\Kthree^{[2]}], \\
    [Z(Y)] &= [\Kthree^{[4]}], 
  \end{align*}
  where $\Kthree$ is some $K3$-surface.
\end{assumption}

Under this assumption we are going to investigate possible forms of a beautiful formula of degree $4$ for smooth cubic fourfolds $Y$ with the LLSvS variety $Z(Y)$.
We will see that \hbox{the $Y$-$F(Y)$} relation (\ref{The Y-F(Y) relation Sym}) also follows from this assumption. 

In order to obtain some relations we are going to express all the related classes through the classes of symmetric powers of $\Kthree$.
We can express the classes $[F(Y)]$ and $[Z(Y)]$ through $[\Kthree]$ and its symmetric powers using G\"ottsche formula (\ref{Gottsche-formula}) for surfaces in the cases of $n=2,4$: 
\begin{align}
  [F(Y)] = [\Kthree^{[2]}]   &= [\Kthree^{(2)}] + \L [\Kthree], \\
  [Z(Y)] = [\Kthree^{[4]}]   &= [\Kthree^{(4)}] + \L [\Kthree] [\Kthree^{(2)}] + \L^2 [\Kthree^{(2)}] + \L^2 [\Kthree^2] + \L^3 [\Kthree]. \label{eq:ZYK3}
\end{align}

The symmetric square of $[Y]$ has the following expression through $[\Kthree]$ and its symmetric powers. We use Axiom~\eqref{eq:lambdasum} from the definition of a pre-$\lambda$-structure. 
\begin{equation}\label{eq:Y2K3}
  [Y^{(2)}]   = \L^2[\Kthree^{(2)}] + (\L^5 + \L^3 + \L)[\Kthree] + \L^8 + \L^6 + 2\L^4 + \L^2 + 1 
\end{equation}
From these expressions we see that 
$$ [Y^{(2)}] = (1 + \L^4)[Y] + \L^2[F(Y)]. $$
This equation coincides with the $Y$-$F(Y)$ relation \eqref{The Y-F(Y) relation Sym} for fourfolds.

\begin{proposition}[Hypothetical $Y$-$Z(Y)$ relation]
	Assume Assumption~\ref{asm:K3} holds. Then in the Grothendieck ring of varieties $\KGro$ we have
\begin{align}\label{eq:YZY}
  \YZYequation.
\end{align}
\end{proposition}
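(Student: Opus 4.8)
The plan is to reduce everything to an identity among classes of symmetric powers of the $K3$ surface $\Kthree$, and then verify this identity by a finite computation with pre-$\lambda$-ring operations. Under Assumption~\ref{asm:K3} we already know
\[
[Y] = \L[\Kthree] + 1 + \L^2 + \L^4,\qquad [F(Y)] = [\Kthree^{[2]}],\qquad [Z(Y)] = [\Kthree^{[4]}],
\]
and these are the only inputs we are allowed to use. So the strategy is: compute the right-hand side of \eqref{eq:YZY}, namely the combination
$[Y^{(4)}] - (1 + \L^2 + \L^4)[Y^{(3)}] + \L^2[Y][Y^{(2)}] - (\L^2 + \L^6)[Y^2] + \L^4[Y^{(2)}] - (\L^{2} + \L^6 + \L^{10})[Y]$,
entirely in terms of $[\Kthree]$ and its symmetric powers, and compare with $\L^4[Z(Y)] = \L^4[\Kthree^{[4]}]$ expanded via \eqref{eq:ZYK3}.

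First I would compute $[Y^{(n)}]$ for $n = 2,3,4$ using the fact that $\KGro$ is a pre-$\lambda$-ring, so $[Y^{(n)}] = \lambda^n([Y])$, together with the additivity axiom \eqref{eq:lambdasum} and the scaling identity $\Sym^n(\L^m\alpha) = \L^{nm}\Sym^n(\alpha)$. Writing $[Y] = \L[\Kthree] + q$ with $q := 1 + \L^2 + \L^4$, one expands $\Lambda([Y],t) = \Lambda(\L[\Kthree],t)\cdot\Lambda(q,t)$; since $q$ is a $\Z[\L]$-combination of the unit, $\lambda^n(q) = \binom{q+n-1}{n}$-type binomial expressions that are straightforward to write down, and $\lambda^n(\L[\Kthree]) = \L^n[\Kthree^{(n)}]$. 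The case $n=2$ was already done in \eqref{eq:Y2K3} and gives the $Y$-$F(Y)$ relation, which is the sanity check that the method is calibrated correctly. For $n = 3,4$ the computation is longer but mechanical: each $[Y^{(n)}]$ becomes a $\Z[\L]$-linear combination of $[\Kthree^{(4)}], [\Kthree][\Kthree^{(2)}], [\Kthree^{(2)}], [\Kthree^2], [\Kthree]$ and powers of $\L$. I would also need $[Y][Y^{(2)}]$ and $[Y^2] = [Y]^2$, which are just products of the linear expression for $[Y]$ with the quadratic one already known.

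Then I would substitute all of these into the right-hand side of \eqref{eq:YZY}, collect terms, and check that every occurrence of a "genuinely two-dimensional" symmetric-power monomial — i.e. the coefficients of $[\Kthree^{(4)}]$, $[\Kthree][\Kthree^{(2)}]$, $[\Kthree^{(2)}]$, $[\Kthree^2]$, and $[\Kthree]$ — matches exactly $\L^4$ times the corresponding coefficient in the G\"ottsche expansion
$\L^4[\Kthree^{[4]}] = \L^4[\Kthree^{(4)}] + \L^5[\Kthree][\Kthree^{(2)}] + \L^6[\Kthree^{(2)}] + \L^6[\Kthree^2] + \L^7[\Kthree]$,
and that the remaining purely-$\L$ terms cancel. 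The coefficient of $[\Kthree^{(4)}]$ comes only from $[Y^{(4)}]$ (it is $\L^4$), which fixes the overall normalization; the other four coefficients and the scalar part then have to work out, and this is the content of the proposition. The main obstacle is purely bookkeeping: the $\lambda^4$ expansion of $[Y] = \L[\Kthree] + 1 + \L^2 + \L^4$ produces many terms (the binomial combinatorics of a sum of four "line-bundle-like" pieces), and one must track the $\L$-powers carefully through $\Sym^n(\L^m\alpha) = \L^{nm}\Sym^n(\alpha)$; a single slip in an exponent or a binomial coefficient would break the match. There is no conceptual difficulty beyond \eqref{eq:lambdasum}, the scaling identity, and the G\"ottsche formula \eqref{Gottsche-formula}, all of which are available; so after the dust settles the equality \eqref{eq:YZY} should drop out, and this equality, holding in $\KGro$ under Assumption~\ref{asm:K3}, is exactly the claimed hypothetical $Y$-$Z(Y)$ relation.
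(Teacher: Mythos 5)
Your proposal is correct and is essentially the paper's own argument: the paper likewise expands $[Y^{(2)}]$, $[Y^{(3)}]$, $[Y^{(4)}]$, $[Y^2]$ and $[Y][Y^{(2)}]$ in terms of $[\Kthree]$ and its symmetric powers via the pre-$\lambda$-ring axiom \eqref{eq:lambdasum} together with $\Sym^n(\L^m\alpha)=\L^{nm}\Sym^n(\alpha)$, and then matches the result against $\L^4[Z(Y)]$ expanded through the G\"ottsche formula \eqref{eq:ZYK3}. Your organization of the bookkeeping via the factorization $\Lambda([Y],t)=\Lambda(\L[\Kthree],t)\cdot\Lambda(1+\L^2+\L^4,t)$ is only a cosmetic repackaging of the same computation.
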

\begin{proof}
Using Axiom~\eqref{eq:lambdasum} from the definition of a pre-$\lambda$-structure we compute: 
\begin{align*}
  [Y^2]       &= \L^2[\Kthree^2] + (2\L^5 + 2\L^3 + 2\L)[\Kthree] + (\L^8 + 2\L^6 + 3\L^4 + 2\L^2 + 1), \\
                [Y^{(3)}]   &= \L^3[\Kthree^{(3)}] + (\L^6 + \L^4 + \L^2)[\Kthree^{(2)}] + (\L^9 + \L^7 + 2\L^5 + \L^3 + \L)[\Kthree] + \\ & 
		+ (\L^{12} + \L^{10} + 2\L^8 + 2\L^6 + 2\L^4 + \L^2 + 1), \\
                [Y][Y^{(2)}]&=  \L^3[\Kthree][\Kthree^{(2)}]  +  (\L^6 + \L^4 + \L^2)[\Kthree^{(2)}] +  (\L^6 + \L^4 + \L^2)[\Kthree^2]+ \\ & 
		+ (2\L^9 + 3\L^7 + 5\L^5 + 3\L^3 + 2\L)[\Kthree] + (\L^{12} + 2\L^{10} + 4\L^8 + 4\L^6 + 4\L^4 + 2\L^2 + 1), \\
                [Y^{(4)}]   &= \L^4[\Kthree^{(4)}] + (\L^7 + \L^5 + \L^3)[\Kthree^{(3)}] + (\L^{10} + \L^8 + 2\L^6 + \L^4 + \L^2)[\Kthree^{(2)}] + \\ & 
		+ (\L^{13} + \L^{11} + 2\L^9 + 2\L^7 + 2\L^5 + \L^3 + \L)[\Kthree] + \\ & 
			    + (\L^{16} + \L^{14} + 2\L^{12} + 2\L^{10} + 3\L^8 + 2\L^6 + 2\L^4 + \L^2 + 1).  
\end{align*}
These equations together with \eqref{eq:Y2K3} and \eqref{eq:ZYK3} imply the proposition.
Note also that easy linear algebra calculations can show that \eqref{eq:YZY} is a unique (up to multiplication) relation between these classes under an assumption that there are no relations between the class of K3 surface and the classes of its symmetric powers in $\KGro$.
\end{proof}

\section{When $\L$ is invertible. Conditions from representation theory} \label{The S-Z(S) relation}
In this section we are going to use the \emph{Gillet-Soul\'e motivic realization homomorphism} (Section~\ref{sec:GSreal})
$$\muGS\colon \KGro \rightarrow \K_0(\Chow)$$
to obtain possible forms of the $S$-$Z(S)$ relation --- a beautiful (Definition~\ref{def:beautiful}) formula of degree~$4$ for cubic surfaces $S$ with the LLSvS variety $Z(S)$ (Section~\ref{LLSvS}). 
In Section~\ref{sec:smooth} we obtain a relation for smooth cubic surfaces in $\K_0(\Chow)$, Theorem~\ref{thm:main}.
In Section~\ref{Singular cubic surfaces} we obtain analogs of the $S$-$Z(S)$ relation for some singular cubic surfaces with one singular point and prove them in $\KGro$.

\subsection{Smooth cubic surfaces}\label{sec:smooth}
Over an algebraically closed field any smooth cubic surface $S$ can be obtained from $\P^2$ by blowing up six points.
So $[S] = 1 + 7\L + \L^2$ in $\KGro$.
Associated moduli spaces $S^{(n)}$, $F(Y)$, $Z(Y)$ can be expressed as polynomials in $\L$.
There are many relations between them.
Fortunately, there is ``more structure'' on the class $[S]$ over other fields.

There exists a rich structure on the geometric N\'eron-Severi group $N(S):= \Pic(S\otimes\bar{k})$ with a big group $\W$ of symmetries, Section~\ref{The 27 lines}.
The group $\W$ is the Weyl group of type $E_6$.
The Galois group $\Gal_k$ also acts on $N(S)$, preserves $K_S$ and the intersection form and so maps to~$\W$. 
Denote by $\W_0\subset\W$ the image of this map.
Denote by $V$ the Artin motive with the associated Galois $\Q$-module $N(S)\otimes \Q$. 
It is known, \cite{Ser91}, \cite[Proposition~14.2.1]{KMP07} that the motive of a smooth cubic surface $S$ is \emph{zero-dimensional}, i.e. it is a sum of the motives of the form~$M\L^n:=M\otimes \L$, where $M$ is an Artin motive and moreover
\begin{equation}\label{eq:Smotive}
\h(S) \cong \Q \oplus V\L \oplus \L^2.
\end{equation}

\begin{lemma}\label{lem:subring}
Let $G$ be a discrete group. Denote by $\Rep(G,\Q)$ the ring of graded rational finite-dimensional representations of $G$ such that the action of $G$ factors through a finite group.
  \begin{enumerate}
    \item The subring in $\K_0(\Chow)$ of zero-dimensional motives is isomorphic to $\Rep(\Gal_k,\Q)$.
    \item Let $S$ be a smooth cubic surface over a field of characteristic $0$. Let $\W_0$ be the image of the $\Gal_k$ in $\W$ described above. Then the motives of $S^{(n)}$, $F(S)$, $Z(S)$ lie in the subring~$\Rep(\W_0,\Q)\subset \Rep(\Gal_k,\Q)\subset \K_0(\Chow)$. The inclusion of~$\Rep(\W_0,\Q)$ in~$\Rep(\Gal_k,\Q)$ is induced by the surjection $\Gal_k\twoheadrightarrow\W_0$. 
  \end{enumerate}
\end{lemma}
\begin{proof}
  (1) The subcategory $\ChowArt$ of Artin motives is equivalent to the category of rational Galois representations, \cite{SP11}, see also Section~\ref{sec:GSreal}. 
  The subcategory $\ChowZero$ of zero-dimensional motives is equivalent to the category of graded rational Galois representations and it is semi-simple.
  The inclusion~$\K_0(\ChowZero)\subset\K_0(\Mot_{num})$ factors through $\K_0(\ChowZero)\rightarrow \K_0(\Chow)$, \cite{MNP13}. Note that the category~$\Mot_{num}$ of motives defined with numerical equivalence is semi-simple.

  (2) The motives of the Fano variety $F(S)$ and the LLSvS variety $Z(S)$ are Artin motives since these varieties are zero-dimensional. 
The set of geometric points of $F(S)$ are in natural bijection with the set of classes of lines $I\subset N(S)$, Section~\ref{The 27 lines}.
The set of geometric points of~$Z(S)$ are in natural bijection with the set of roots $R\subset N(S)$, Section~\ref{LLSvS}.
The motives of the symmetric powers $S^{(n)}$ are also zero-dimensional since $\muGS([\Sym^n X]) = \Sym^n\h(X)$ in~$\Chow$,~\cite{RA98}. 
The action of the Galois group~$\Gal_k$ on the associated Galois modules factors through $\W_0$. 
\end{proof}

\begin{remark}\label{rk:full}
  In \cite[Section~6]{EJ09} the authors provide a concrete example of the smooth cubic surface over $\Q$ defined by the equation
  \[x^3 + 2xy^2 + 11y^3 + 3xz^2 + 5y^2w + 7zw^2 = 0,\]
  such that the image of the Galois group in $\W$ is equal to the whole group $\W$.
\end{remark}

\begin{remark}\label{rk:QeqC}
Representation theory for $\W$ over $\Q$ is the same as representation theory over~$\C$.
Table~\ref{E6} shows that the characters of irreducible representations are rational. 
This implies~\cite{CR62} that for any irreducible $\C$-representation $W$ the representation $W^{\oplus m(W)}$ is defined over $\Q$ where $m(W)$ is the Schur index of $W$.
In \cite{Ben69} it is shown that in the case of the Weyl group $\W$ of type $E_6$ these indices are equal to $1$. 
\end{remark}

\begin{lemma}\label{lem:evaluation}
  Any formula $P$ for smooth cubic surfaces $S$ with the Fano variety $F(S)$ and the LLSvS variety $Z(S)$ can be considered, using Gillet-Soul\'e realization $\mu_{GS}$, as a formula $\tilde{P}$ in the ring $\Rep(\W,\C)$ of graded complex representations of the Weyl group $\W$ of type $E_6$. 
  Moreover, the formula $\tilde{P}$ holds in $\Rep(\W,\C)$ if the formula $P$ is beautiful.
\end{lemma}
\begin{proof}
  Let $S_k$ be a smooth cubic surface from Remark~\ref{rk:full}. 
  Using $S$ consider $P$ as a formula in $\KGro$. 
  By Lemma~\ref{lem:subring} Gillet-Soul\'e realization $\mu_{GS}$ of this formula $P$ lies in the subring~$\Rep(\W,\Q)$. 
  By Remark~\ref{rk:QeqC} representation theory of $\W$ over $\Q$ is equivalent to representation theory over $\C$. 
  So the ring $\Rep(\W,\Q)$ is naturally isomorphic to the ring~$\Rep(\W,\C)$.
  We obtain a formula $\tilde{P}$ in $\Rep(\W,\C)$.

  Recall that if $P$ is a beautiful formula, then it is a formula that holds in any realization for any smooth cubic surface $S$ over any field $k$ (maybe except some fields of small characteristic). 
  That is why the formula $\tilde{P}$ holds in $\Rep(\W,\C)$.
\end{proof}

Now we are going to evaluate classes of the varieties $S^{(n)}, S^{[n]}, F(S), Z(S)$ in $\Rep(\W,\C)$ more explicitly.
By abuse of notation we will denote the class of a variety $X$ in $\Rep(\W,\C)$ as $[X]$. 
An element of the ring $\Rep(\W,\C)$ is completely described by its decomposition in irreducible representations. 
Denote by $\chi_n $ irreducible representations of $\W$ as in the character table (Table~\ref{E6}) in Appendix. 

\begin{lemma}\label{lem:Virr}
  The representation $[V]\in\Rep(\W,\C)\cong \Rep(\W,\Q)$ corresponding to the Artin motive $V$ from \eqref{eq:Smotive} has the following decomposition:
  \[ [V] = 1 + \X_3. \]
\end{lemma}
\begin{proof} 
Recall that $[V]=N(S)\otimes \C$.
In notations of Section~\ref{The 27 lines} consider the action of the group~$\S_6$ on $N(S)$ by permuting $E_i$, $i\neq 0$.
The symmetries of the lattice $K_S^{\perp} \subset N(S)$ is~$\W$.
The canonical class $K_S = -3E_0 + E_1 + \dots + E_6$ is stable under the action of $\S_6$.
So the group~$\S_6$ is a subgroup of $\W$.
To identify the representation $[V]$ it is enough to calculate traces of elements from $\S_6$.

More precisely, $[V]=W+1$ since the canonical class is fixed by the action of $\W$. 
Note that~$W=1+M$, where $M$ is the tautological irreducible representation of $\S_6$. That is why~$W$ is not a sum of one-dimensional representations as $\S_6$-representation.  
The only irreducible representations of $\W$ with dimension $\leqslant 6$ are~$\chi_1,\dots,\chi_4$, Table~\ref{E6}. The representations $\chi_1$ and~$\chi_2$ are one-dimensional and the representations $\chi_3$ and $\chi_4$ are six-dimensional.
This implies that~$W$ is~$\chi_3$ or $\chi_4$.
The representations $W$ is permutational in the basis $\{E_1,\dots,E_6\}$. 
The trace of an element $g\in \S_6$ is equal to the number of fixed vectors in this basis.  
The trace of the action of a simple transposition on $W$ is $4$.
A simple transposition lies in some conjugacy class of $\W$ and only $\chi_3$ has trace equal to $4$ for some conjugacy class.
\end{proof}
\begin{lemma}\label{lem:deg2irr}
  The classes $[S], [S^2], [S^{(2)}], [S^{[2]}]$ and $[F(S)]$ have the following irreducible decompositions in $\Rep(\W,\C)$:
\begin{align}
  [S] &= 1 + ( 1+\chi_3 )\L + \L^2,  \label{eq:Sirr}\\
  [S^2] &= 1 + (2 + 2\X_3 )\L^1 + (4 + 2\X_3 + \X_9 + \X_10 )\L^2 + (2 + 2\X_3 )\L^3 +\L^4, \label{eq:S2irr} \\
  [S^{(2)}] &= \One + (\One + \X_3 )\L + (3 + \X_3 + \X_10 )\L^2 + (\One + \X_3 )\L^3 + \L^4, \label{eq:S2Symirr} \\ 
  [S^{[2]}] &= 1 + (2 + \chi_3 ) \L^1 + (4 + 2\chi_3 + \chi_{10} ) \L^2 + (2 + \X_3 ) \L^3 + \L^4,  \label{eq:SHilb2irr} \\
  [F(S)] &= 1 + \chi_3 + \chi_{10}. \label{eq:Firr} 
\end{align}
\end{lemma}
\begin{proof}
  Equation \eqref{eq:Sirr} follows from Lemma~\ref{lem:Virr} and \eqref{eq:Smotive}.
  Equation~\eqref{eq:S2irr} is the direct multiplication of characters.

  The information in Table~\ref{E6} is enough to calculate symmetric powers of characters up to degree~$5$. 
  Note that the lines $3$--$5$ shows the squares, cubes and fifth powers of the conjugacy classes of~$\W$. 
  Equation \eqref{eq:SHilb2irr} is an application of the G\"ottsche formula~\eqref{Gottsche-formula} to \eqref{eq:S2Symirr}.

  Equation \eqref{eq:Firr} can be obtained in many ways. 
  One can use the same strategy as in the proof of Lemma~\ref{lem:Virr}.
  Let us use the $Y$-$F(Y)$ relation \eqref{The Y-F(Y) relation Sym} instead.
  It is a beautiful formula so by Lemma~\ref{lem:evaluation} we obtain a relation in $\Rep(\W,\C)$.
  Using decompositions \eqref{eq:S2Symirr}, \eqref{eq:Sirr} and considering the coefficients of $\L^2$ we obtain:
  \[ 3 + \chi_3 + \X_10 = 2 + [F(S)]. \]
\end{proof}

\begin{remark}\label{rmk:chi10}
  There exists a map from the representation $[F(S)]$ to the representation $[V]$.
  It sends the class of a line to the corresponding vector in $V\cong N(S)\otimes \C$.
  Since the classes of lines span the whole space $V$ this map is surjective.
  Lemma~\ref{lem:deg2irr} shows that the kernel is $\chi_{10}$.
\end{remark}

\begin{corollary} See Section~\ref{sec:beau} for definitions of homogeneous and beautiful formulae. 
  \begin{enumerate}
    \item There is no homogeneous beautiful formula of degree $2$ for smooth cubic surfaces $S$.
    \item The $Y$-$F(Y)$ relation \eqref{The Y-F(Y) relation} is the unique (up to multiplication) beautiful formula of degree $2$ for smooth cubic surfaces $S$ with the Fano variety $F(S)$.
  \end{enumerate}
\end{corollary}
\begin{proof}
  (1) The classes $[S]$, $[S^2]$ and $[S^{[2]}]$ are the only classes of degree $\leqslant2$ (we use the notation of degree from the definition of beautiful formulae, Section~\ref{sec:beau}). 
  Any homogeneous beautiful formula of degree $2$ for smooth cubic surfaces $S$ would imply a relation (with coefficients in~$\C[\L]$) between these classes in $\Rep(\W,\C)$. 
  The decomposition \eqref{eq:Sirr} of the class~$[S]\in\Rep(\W,\C)$ does not have irreducible summands $\chi_{9}$ and $\chi_{10}$. 
  The decomposition \eqref{eq:SHilb2irr} has $\chi_{10}$. 
  The decomposition \eqref{eq:S2irr} has $\chi_{9}$ and $\chi_{10}$.
  So there are no relations (with coefficients in $\C[\L]$) between them in~$\Rep(\W, \C)$.

  (2) 
  Let us call a relation \emph{minimal} if it is not obtained as a multiplication of another relation by some non-invertible polynomial in $\L$. 
  The part (1) implies that there is only one (if exists) minimal relation (with coefficients in $\C[\L]$) between $[F(S)]$ and other classes. 
  The coefficient between~$[F(S)]$ is divided by $\L^2$ in any relation with $[F(S)]$, since $\chi_{10}$ appears only before $\L^2$ in the irreducible decompositions of $[S^2]$ and $[S^{(2)}]$.
    That is why the $Y$-$F(Y)$ relation is the minimal relation.
\end{proof}

We are going to apply the same strategy for searching beautiful formulae of degree $3$ and $4$ for smooth cubic surfaces $S$ with the LLSvS variety $Z(S)$.
The representation $[Z(S)]$ is obtained from the action of $\W$ on the $72$ $\P^2$-families of generalized twisted cubics on $S$. These families are in bijection with roots of the root system of type $E_6$, Theorem~\ref{gtc on sing surf}.
\begin{lemma}\label{lem:Zirr}
  The 72-dimensional representation $[Z(S)]\in\Rep(\W,\C)$ has the following decomposition in irreducible representations:
  \begin{equation}\label{eq:Zirr}
    [Z(S)]= 1 + \X_3 + \X_8 + \X_10 + \X_16.
  \end{equation}
\end{lemma}
\begin{proof}
  We obtained this using explicit description of roots, Section~\ref{The 27 lines}. 
  Using \texttt{SageMath} we obtained explicit representatives for each conjugacy class of $\W$ in terms of simple reflections and calculated traces explicitly.
\end{proof}
\begin{remark}

  This decomposition can be seen in the following way.
  For each root $\alpha \in R$, the root $-\alpha\in R$. 
  This gives a decomposition $[Z(S)]=M_+ \oplus M_-$, where $M_+$ is generated by sums of opposite roots and $M_-$ by differences of opposite roots.
  
  There is a map from $[Z(S)]$ to $V$ which sends a root to the corresponding vector in $V$. 
  The image is a six-dimensional subspace.
  The subspace $M_+$ lies in a kernel. 
  That is why we can consider this map as a map from $M_-$.
  Denote by $V_{30}$ its $30$-dimensional kernel. 

  According to \cite{Dol12} the set of roots are in natural ($\W$-equivariant) bijection with the sixes of orthogonal classes of lines. 
  This defines a map from $M_+$ to $[F(S)]$.
  It sends a root to the sum of six corresponding classes of lines.
  The image is $\chi_{10}$ defined as a subspace in $[F(S)]$ in Remark~\ref{rmk:chi10}.
  The kernel is a sum of a trivial representation and some $15$-dimensional representation $V_{15}$.

  Using \eqref{eq:Zirr} we obtain that $V_{15}$ is isomorphic to $\chi_8$ and $V_{30}$ is isomorphic to $\chi_{16}$.
\end{remark}

\begin{lemma}\label{lem:deg3irr}
  The classes of degree $3$ (we use the notion of degree from the definition of beautiful formulae, Section~\ref{sec:beau}) have the following decompositions in irreducible representations:
\begin{align*} 	 
     [S^{(3)}] &= 1 + (1 + \X_3 )\L + (3 + \X_3 + \X_10 )\L^2 + (3 + 3\X_3 + 2\X_10 + \X_16 )\L^3 +\\
     &+ (3 + \X_3 + \X_10 )\L^4 + (1 + \X_3 )\L^5 + \L^6, \\ 
[S\times S^{(2)}] &= 1 + (2 + 2\X_3 )\L^1 + (6 + 3\X_3 + \X_9 + 2\X_10 )\L^2 +\\
               &+ (6 + 7\X_3 + \X_9 + 3\X_10 + \X_16 + \X_20 )\L^3 + \\
	       &+ (6 + 3\X_3 + \X_9 + 2\X_10 )\L^4 + (2 + 2\X_3 )\L^5 + \L^6, \\
         [S^3] &= 1 + (3 + 3\X_3 )\L + (9 + 6\X_3 + 3\X_9 + 3\X_10 )\L^2 +\\
               &+(10 + 12\X_3 + 3\X_9 + 4\X_10 + \X_12 + \X_16 + 2\X_20 )\L^3 + \\
	       &+ (9 + 6\X_3 + 3\X_9 + 3\X_10 )\L^4 + (1 + \X_3 )\L^5 + \L^6.  
\end{align*}
\end{lemma}
\begin{proof}
  The calculations are analogous to calculations in Lemma~\ref{lem:deg2irr}.
\end{proof}

\begin{theorem}\label{No formulae of degree 3}
        There are no beautiful formulae of degree $3$ for smooth cubic surfaces $S$ with the Fano variety $F(S)$ and the LLSvS variety $Z(S)$.
\end{theorem}
\begin{proof}
  A beautiful formula of degree $3$ implies a formula of degree $3$ in $\Rep(\W, \C)$, Lemma~\ref{lem:subring}.
  Any formula of degree $3$ uses some classes from Lemma~\ref{lem:deg3irr}. 
  These classes have new irreducible summands in comparing with the classes of degree $2$ and $1$ and the class $[F(S)]$.
  The class~$[S^{(3)}]$ has $\chi_{16}$. 
  The class $[S\times S^{(2)}]$ has $\chi_{20}$.
  The class~$[S^3]$ has $\chi_{12}$.
  That is why any such formula uses the class~$[Z(S)]$. 
  This class has the new irreducible summand $\chi_8$ in comparing with the other classes, Lemma~\ref{lem:Zirr}.
\end{proof}


\begin{lemma}\label{lem:deg4irr}
  The classes of degree $4$ (we use the notion of degree from the definition of beautiful formulae, Section~\ref{sec:beau}) have the following decompositions in irreducible representations:
\begin{align*}
           [S^{(4)}] &= 1 + (1 + \X_3  )\L + (3 + \X_3 + \X_10 )\L^2 + (3 + 3\X_3 + 2\X_10 + \X_16 )\L^3 +\\&+  (6 + 4\X_3 + \X_8 + 5\X_10 + \X_16 + \X_20 )\L^4 +
	   (3 + 3\X_3 + 2\X_10 + \X_16 )\L^5 +\nonumber\\&+ (3 + \X_3 + \X_10 )\L^6 + (1 + \X_3 )\L^7 +\L^8, \nonumber\\ 
	   [S\times S^{(3)}] &=  1 + (2 + 2\X_3 )\L + (6 + 3\X_3 + \X_9 + 2\X_10 )\L^2 +\\&+ (8 + 9\X_3 + \X_9 + 5\X_10 + 2\X_16 + \X_20 )\L^3 +\nonumber\\&+ (12 + 10\X_3 + \X_8 + 3\X_9 + 10\X_10 + 3\X_16 + 3\X_20 + \X_23 )\L^4 +\nonumber\\&+ (8 + 9\X_3 + \X_9 + 5\X_10 + 2\X_16 + \X_20 )\L^5 +\nonumber\\&+ (6 + 3\X_3 + \X_9 + 2\X_10 )\L^6 + (2 + 2\X_3 )\L^7 + \L^8,\nonumber\\
          [S^4] &= 1 + (4\X_3 + 4 )\L + (12\X_3 + 6\X_9 + 6\X_10 + 16 )\L^2 +\\
                &+ (36\X_3 + 12\X_9 + 16\X_10 + 4\X_12 + 4\X_16 + 8\X_20 + 28 )\L^3 +\nonumber\\
                &+ (41\X_3 + \X_7 + \X_8 + 24\X_9 + 29\X_10 + 4\X_12 + 2\X_13 + 7\X_16 + 2\X_17 + 12\X_20 +\nonumber\\&+ 3\X_23 + 3\X_25 + 40 )\L^4 +\nonumber\\
                &+ (36\X_3 + 12\X_9 + 16\X_10 + 4\X_12 + 4\X_16 + 8\X_20 + 28 )\L^5 +\nonumber\\ 
		&+ (12\X_3 + 6\X_9 + 6\X_10 + 16 )\L^6 + (4\X_3 + 4 )\L^7 + \L^8,\nonumber\\
           [S^{(2 )}\times S^{(2 )}] &= 1 + (2\X_3 + 2 )\L + (4\X_3 + \X_9 + 3\X_10 + 8 )\L^2 +\\
                                     &+ (12\X_3 + 2\X_9 + 6\X_10 + 2\X_16 + 2\X_20 + 10 )\L^3 +\nonumber\\
                                     &+ (13\X_3 + \X_8 + 4\X_9 + 13\X_10 + \X_13 + 3\X_16 + \X_17 + 4\X_20 + \X_23 + 17 )\L^4 +\nonumber\\
                                     &+ (12\X_3 + 2\X_9 + 6\X_10 + 2\X_16 + 2\X_20 + 10 )\L^5 +\nonumber\\
				     &+ (4\X_3 + \X_9 + 3\X_10 + 8 )\L^6 + (2\X_3 + 2 )\L^7 + \L^8,\nonumber\\
				     [S^2\times S^{(2 )}] &= 1 + (3\X_3 + 3 )\L + (7\X_3 + 3\X_9 + 4\X_10 + 11 )\L^2 +\\&+ (21\X_3 + 5\X_9 + 10\X_10 + \X_12 + 3\X_16 + 4\X_20 + 17 )\L^3 +\nonumber\\&+ (23\X_3 + \X_8 + 11\X_9 + 19\X_10 + \X_12 + \X_13 + 5\X_16 + \X_17 + 7\X_20 +\nonumber\\&+ 2\X_23 + \X_25 + 25 )\L^4 +\nonumber\\&+ (21\X_3 + 5\X_9 + 10\X_10 + \X_12 + 3\X_16 + 4\X_20 + 17 )\L^5 +\nonumber\\&+ (7\X_3 + 3\X_9 + 4\X_10 + 11 )\L^6 + (3\X_3 + 3 )\L^7 + \L^8. \nonumber
\end{align*}
\end{lemma}
\begin{proof}
  The calculations are analogous to calculations in Lemma~\ref{lem:deg2irr}.
\end{proof}

\begin{proposition}\label{prop:nodeg4hom}
        There are no homogeneous beautiful formulae of degree $4$ for smooth cubic surfaces $S$.
\end{proposition}
\begin{proof}
  Any homogeneous beautiful formula of degree $4$ for smooth cubic surfaces $S$ implies a formula of degree $4$ that holds in the ring $\Rep(\W, \C)$.
  We obtain irreducible decompositions for all the classes that can be involved in such a formula in Lemma~\ref{lem:deg2irr}, Lemma~\ref{lem:deg3irr}, Lemma~\ref{lem:deg4irr}.
  Simple linear algebra calculations shows that there are no relations (with coefficients in $\C[\L]$) between these classes.
  Note that it is enough to show that there are no relations after evaluation~$\L\mapsto 1$.
\end{proof}
\begin{remark}\label{Formulae of degree 5}
        The same kind of calculations can show that there is a unique (up to multiplication) homogeneous formulae of degree $5$ for smooth cubic surfaces $S$ in the ring $\Rep(\W)$:
        \begin{align}\label{Homrel5}
[S^{(5)}] &+ [S\times S^{(3)}] = [S\times S^{(4)}] + [S^{(3)}] 
   -\nonumber\\&- \L[S^2\times S^{(2)}] -\L^2[S\times S^{(3)}] + (2\L + \L^2 + 2\L^3)[S\times S^{(2)}] + (\L + \L^2 + \L^3)[S^3] 
   -\nonumber\\&- (2\L + 3\L^2 + 5\L^3 + 3\L^4 + 2\L^5)[S^{(2)}] + (\L^2 + \L^4)[S^{(3)}] 
   -\\&- (\L+\L^2+\L^3 +\L^4+\L^5)[S^{(2)}] + (\L^7 + 3\L^6 + 4\L^5+ 5\L^4 + 4\L^3 + 3\L^2 + \L)[S] 
   -\nonumber\\&- (\L^8 + \L^7 + 2\L^6 + \L^5 + 2\L^4 + \L^3 + \L^2).\nonumber
\end{align}
\end{remark}
\mainthm
\begin{proof}
  A beautiful formula of degree $4$ for smooth cubic surfaces $S$ with the LLSvS variety~$Z(S)$ by Lemma~\ref{lem:evaluation} gives a formula in $\Rep(\W, \C)$.
  The relation \eqref{S-Z(S) formula} holds in $\Rep(\W, \C)$. 
  This can be checked using irreducible decompositions we provided in Lemma~\ref{lem:deg2irr}, Lemma~\ref{lem:Zirr}, Lemma~\ref{lem:deg3irr} and Lemma~\ref{lem:deg4irr}.
  By Proposition~\ref{prop:nodeg4hom} there are no relations (with coefficients in $\C[\L]$) between the classes with symmetric powers of $S$ of degree up to $4$ (we use the notation of degree from the definition of beautiful formulae, Section~\ref{sec:beau}).
  It follows that there exists a unique minimal relation with the class $[Z(S)]$, i.e. all other relations are multiplication of this one by some non-invertible polynomial in $\L$.
  Note that there is no irreducible summand $\chi_8$ in the irreducible decompositions of the classes with symmetric powers of $S$ before $\L^0$, $\L$, $\L^2$ and $\L^3$. 
  That means that the coefficient between $[Z(S)]$ in any formula with these classes is divided by $\L^4$. 
  So the relation \eqref{S-Z(S) formula} is minimal.

  By Lemma~\ref{lem:subring}, there is an embedding of the rings $\Rep(\W,\C)\cong \Rep(\W,\Q)\subset \K_0(\Chow)$. That is why the relation \eqref{S-Z(S) formula} is also a unique (up to multiplication) formula in $\K_0(\Chow)$.
  
The G\"ottsche formula \eqref{Gottsche-formula} allows to rewrite the $S$-$Z(S)$ relation in $\Hilb$-form.
\begin{align*}
  [S^{[2]}] &= 1 + (2 + \X_3 ) \L^1 + (4 + 2\X_3 + \X_10 ) \L^2 + (2 + \X_3 ) \L^3 + \L^4, \\
  [S^{[3]}] &= 1 + (2 + \X_3 ) \L^1 + (5 + 3\X_3 + \X_10 ) \L^2 + (7 + 5\X_3 + \X_9 + 3\X_10 + \X_16 ) \L^3 + \\ &+ (5 + 3\X_3 + \X_10 ) \L^4 + (2 + \X_3 ) \L^5 + \L^6, \\
  [S^{[4]}] &= 1 + (2 + \X_3 ) \L^1 + (6 + 3\X_3 + \X_10 ) \L^2 + (10 + 7\X_3 + \X_9 + 4\X_10 + \X_16 ) \L^3 + \\ &+ (15 + 12\X_3 + \X_8 + \X_9 + 9\X_10 + 2\X_16 + 2\X_20 ) \L^4 + (10 + 7\X_3 + \X_9 + 4\X_10 + \\ &+ \X_16 ) \L^5 + (6 + 3\X_3 + \X_10 ) \L^6 + (2 + \X_3 ) \L^7 + \L^8, \\
  [S\times S^{[2]}] &= 1 + (3 + 2\X_3 ) \L^1 + (8 + 5\X_3 + \X_9 + 2\X_10 ) \L^2 + \\ &+ (10 + 9\X_3 + 2\X_9 + 4\X_10 + \X_16 + \X_20 ) \L^3 + \\ &+ (8 + 5\X_3 + \X_9 + 2\X_10 ) \L^4 + (3 + 2\X_3 ) \L^5 + \L^6.
\end{align*}
\end{proof}
\motivicmainthm
\begin{proof}
  The category of rational graded representations of $\W$ is semi-simple. That is why all the decompositions in irreducible representations above are equivalences in this category. Since it is a subcategory in $\Chow$ the relation in the theorem is a corollary of these decompositions. 
\end{proof}

\subsection{Singular cubic surfaces} \label{Singular cubic surfaces}
Let $Y$ be a cubic hypersurface of dimension $d$ over a field $k$. Note that the $Y$-$F(Y)$ relation (\ref{The Y-F(Y) relation}) in the $\Sym$-form in the case of singular cubic hypersurfaces has the following form:
$$[Y^{(2)}] = (1 + \L^d)[Y] + \L^2[F(Y)] - \L^d[Sing(Y)]$$
where $Sing(Y)$ is the singular locus of $Y$, \cite[Theorem~5.1]{GS14}. 
The $S$-$Z(S)$ relation also may have similar forms in the case of singular surfaces.
In this section we will investigate the case of singular cubic surfaces with at most rational double point singularities. We obtain some analogs of the $S$-$Z(S)$ relation in $\KGro$ in the cases of singularities of type $A_1$ and $A_2$.

Let $S$ be a singular cubic surface over a field $k$ with one rational double point singularity and let $\tilde{S}\rightarrow S$ be its minimal resolution.
The smooth surface $\tilde{S}$ is a weak Del Pezzo surface.
The structure on $H^2(\tilde{S}, \Z)$ is the same as in the smooth case, Section \ref{The 27 lines}.
The orthogonal complement $K_{\tilde{S}}^{\perp}\subset H^2(\tilde{S},\Z)$ is the negative definite root lattice of type $E_6$.
Curves in the exceptional divisor generate a root sublattice $R_0\subset R$.
See~\cite[Section~2.1]{LLSvS} for detail account about possible singularities of cubic surfaces and corresponding structures on cohomology and the book \cite[Section~9.2]{Dol12} for detailed descriptions of singular cubic surfaces.



\subsubsection{Type $A_1$} \label{A1}
Let $S$ be a cubic surface over a field $k$ with one rational double point singularity of type $A_1$. 
The root system $R_0$ consists of two roots $\alpha, -\alpha$.
Fix the notation from Section~\ref{The 27 lines}.
Without loss of generality we may assume that $\alpha = 2E_0 - \sum E_i$, $i\in{1,\dots,6}$.
The orthogonal complement to $R_0$ is formed by the roots $E_i-E_j$, $i\neq j$. 
So it is a root system of type $A_5$. 
The subgroup $G\subset \W$ that permutes $E_i$ is $\S_6$.

According to \cite{Dol12} the surface $S$ can be obtained in the following way.
We can find a smooth conic $C\subset \P^2$ and a smooth subscheme $A(S)$ of length $6$ on $C$, such that $\tilde{S}$ is the blow up of $A(S)$ and $S$ is the blow down of $C$ considered as curve on $\tilde{S}$.
The scheme $A(S)$ is the scheme of lines passing through the singular point.
Sometimes we will drop the dependence on~$S$ in the notation $A(S)$ for more compact equations. 

\Aonegrolemma
\begin{proof}
  Using the construction with the blow up we see that $[S] = \P^2 + [A(S)]\L - [C]+1$. 
\end{proof}

\begin{remark}
        According to Section 9.2 \cite{Dol12} a singular cubic surface $S$ with one rational double point singularity of type $A_1$ has the following explicit description.
        Let $p = [1, 0, 0 ,0]$ be the singular point of $S = V(f_3)$.
        Then $f_3$ can be written as:
        $$f_3 = t_0 g_2(t_1,t_2,t_3) + g_3(t_1,t_2,t_3),$$
        where $g_2$, $g_3$ are homogeneous polynomials of degree 2 and 3.
        And $V(g_2)\subset \P^2$ is a nonsingular conic which intersects $V(g_3)$ transversally. 
        The variety $A(S)$ is this intersection. 
\end{remark}

In this section we will concentrate on the case when $\Delta = 0$. It is equivalent to the condition that $C$ has a $k$-point. 
The scheme $A$ is completely described by the action of the absolute Galois group $\Gal_k$ on its geometric points.
This action factors through the action of $G$.
This gives a map $\Burn(G)\rightarrow \Burn(\Gal_k)$, see Section~\ref{sec:burnside} for details on Burnside rings.
Denote by~$[A]$ the class of the $G$-set of geometric points of $A$ in $\Burn(G)$.

According to Theorem 2.1 \cite{LLSvS} the moduli space of generalized twisted cubics on~$S$ has~$R/\W(R_0)$ copies of $\P^2$ and the $\Gal_k$-set of geometric points of $Z(S)$ is isomorphic to~$R/\W(R_0)$, Section~\ref{LLSvS}.
Note that $\W(R_0)$ is isomorphic to $\Z_2$.
The action of $\Gal_k$ on $R/\W(R_0)$ factors through the action of $G$.
Denote by $[Z(S)]$ the corresponding $G$-set of geometric points of $Z(S)$.

\begin{lemma}\label{lem:ZA1}
  The following relation between $A$ and $Z(S)$ holds in $\Burn(G)$ and in $\KGro$:
\begin{equation} \label{Z-A1}
  [Z(S)] = \One + [A^{(3)}] - [A].
\end{equation}
\end{lemma}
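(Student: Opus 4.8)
The plan is to describe explicitly the $G$-set of geometric points of $Z(S)$, which by Theorem~\ref{gtc on sing surf} is $R/\W(R_0)$, and then to match it against the $G$-set $\One + [A^{(3)}] - [A]$ in $\Burn(G)$; since $\mathrm{Art}_k$ is a pre-$\lambda$-ring homomorphism, the identity in $\Burn(G)$ will push forward to the claimed identity in $\KGro$. Here $G = \S_6$ acts by permuting $E_1,\dots,E_6$, and $A$ is the $G$-set of six geometric points on the conic, i.e. the standard permutation $G$-set $\{1,\dots,6\}$. So the right-hand side is $\One + [\{1,\dots,6\}^{(3)}] - [\{1,\dots,6\}]$, where the symmetric cube is the set of size-$3$ multisets from a $6$-element set, with its natural $\S_6$-action. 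Concretely $[A^{(3)}]$ decomposes into the $G$-orbits of multisets of type $\{a,b,c\}$ (all distinct, $\binom{6}{3}=20$ elements), type $\{a,a,b\}$ ($30$ elements), and type $\{a,a,a\}$ ($6$ elements).

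First I would enumerate the roots of $R$ in the $E_i$-coordinates, as already listed in Section~\ref{The 27 lines}: the $72$ roots split as $\pm(2E_0 - \sum E_i)$ (two of them), $\pm(E_0 - E_i - E_j - E_k)$ ($20+20$), and $\pm(E_i - E_j)$ ($30$). Since $R_0 = \{\pm\alpha\}$ with $\alpha = 2E_0 - \sum E_i$, the group $\W(R_0) \cong \Z/2$ acts on $R$ by the reflection $s_\alpha$. I would compute $s_\alpha$ on each root: it fixes the roots $E_i - E_j$ orthogonal to $\alpha$, swaps $\pm\alpha$, and pairs up each root $E_0 - E_i - E_j - E_k$ with $-(E_0 - E_{i'} - E_{j'} - E_{k'})$ where $\{i',j',k'\}$ is the complement of $\{i,j,k\}$ in $\{1,\dots,6\}$ (this is the content of the identity $s_\alpha(E_0 - E_i - E_j - E_k) = -E_0 + E_{i'} + E_{j'} + E_{k'}$, which one checks from $s_\alpha(v) = v + (v,\alpha)\alpha$). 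Hence $R/\W(R_0)$ consists of: one class from $\{\pm\alpha\}$; thirty classes from the fixed roots $E_i - E_j$; and twenty classes from the $\pm(E_0 - E_i - E_j - E_k)$ roots, each class being naturally labelled by the unordered partition $\{i,j,k\}\,\sqcup\,\{i',j',k'\}$ of $\{1,\dots,6\}$ into two triples — equivalently by a $3$-element subset up to complementation, but more usefully: the ten such partitions, each counted... wait, $20/2 = 10$ pairs $\{i,j,k\}, \{i',j',k'\}$? No: there are $20$ roots $E_0 - E_i - E_j - E_k$, and $s_\alpha$ pairs each with a root of the form $-E_0 + E_{i'} + E_{j'} + E_{k'}$, so the $20 + 20 = 40$ roots form $20$ orbits under $\W(R_0)$, and each orbit is indexed by an unordered pair of complementary triples, of which there are $\binom{6}{3}/2 = 10$ — so each pair of complementary triples indexes two orbits. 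I would sort this bookkeeping carefully.

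The key step is then the $G$-equivariant bijection. As $\S_6$-sets, $R/\W(R_0)$ decomposes as: the trivial one-point set (the class of $\alpha$); the set of ordered pairs $(i,j)$, $i \ne j$ (the $30$ roots $E_i - E_j$), which is $[A]\cdot[A] - [A] = [A^{(2)}] + [A]^{\wedge 2}$-type — more simply it is the $\S_6$-set of $2$-element ordered tuples; and the $20$-element set which is $\S_6$-isomorphic to the set of $3$-element subsets of $\{1,\dots,6\}$ with a sign (a double cover of the $10$-element set of unordered triple-partitions, on which $\S_6$ acts the way it acts on $3$-subsets). I would then verify that $\One + [A^{(3)}] - [A]$, with $[A^{(3)}]$ written as $[\{abc\}] + [\{aab\}] + [\{aaa\}]$ (the three orbit types, of sizes $20, 30, 6$), matches term by term: the orbit $[\{abc\}]$ of unordered distinct triples ($20$ elements, $\S_6$ acting on $3$-subsets, which is the same action as on the $20$ sign-labelled... no — $3$-subsets of a $6$-set number $20$ and are permuted by $\S_6$, and this is exactly the $20$-element orbit above); $[\{aab\}]$ ($30$ elements $= $ ordered pairs, matching the $30$ roots $E_i-E_j$); $[\{aaa\}]$ ($6$ elements $= [A]$), which cancels the $-[A]$; and $\One$ is the class of $\alpha$. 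The main obstacle is getting the $\S_6$-module structure on the $20$-element piece exactly right — i.e. checking that the $\W(R_0)$-orbits of the roots $E_0 - E_i - E_j - E_k$ really do form the permutation module on $3$-subsets and not, say, two copies of something smaller or the module on triple-partitions with multiplicity. This requires carefully tracking how $s_\alpha$ identifies a root with its "complement" and confirming the orbit count is $20$ with the stabilizer of each being $\S_3 \times \S_3$, which gives index $20$ in $\S_6$, matching $[A^{(3)}]$'s distinct-triple orbit. Once the dictionary of orbits is pinned down, the identity is a finite check, and the $\KGro$ statement follows by applying $\mathrm{Art}_k$, using that $\mathrm{Art}_k$ commutes with $\lambda$-operations so that $\mathrm{Art}_k([A^{(3)}]) = [A(S)^{(3)}]$.
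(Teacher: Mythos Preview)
Your proposal is correct and follows essentially the same approach as the paper: enumerate $R/\W(R_0)$ by analyzing how the reflection $s_\alpha$ acts on the five groups of roots from Section~\ref{The 27 lines}, match the resulting $G$-sets against $\One + [A^{(3)}] - [A]$ in $\Burn(G)$, and push forward to $\KGro$. Your worry about the $20$-element piece resolves cleanly once you observe that every $\sigma \in G = \S_6$ fixes $E_0$ and hence cannot swap $E_0 - E_i - E_j - E_k$ with $-E_0 + E_{i'} + E_{j'} + E_{k'}$, so the stabilizer of each $\W(R_0)$-orbit is indeed $\S_3 \times \S_3$ and the $G$-set is the set of $3$-subsets rather than a multiple of the $10$-element set of triple-partitions; the paper records the same decomposition, phrasing the $30$-element piece as $[A^2]-[A]$ and the $20$-element piece as $[A^{(3)}]-[A^2]$, which is just a different bookkeeping of your orbit-type splitting of $[A^{(3)}]$.
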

\begin{proof}
  The relation in $\KGro$ will follow from the relation in $\Burn(G)$ due to the sequence of maps of pre-$\lambda$-rings, Section~\ref{sec:burnside}:
  \[ \Burn(G) \rightarrow \Burn(\Gal_k) \rightarrow \KGro. \]

To prove the relation in $\Burn(G)$ we need to analyze the $G$-set $R/\W(R_0)$.
Divide 72 roots of~$R$ in five groups as in Section \ref{The 27 lines}:
\begin{enumerate}
        \item One vector $2E_0 - \sum E_i$.
        \item Twenty vectors $E_0 - E_i - E_j - E_k$, $i\neq j\neq k\neq 0$.
        \item Thirty vectors $E_i - E_j$, $i\neq j$.
        \item Twenty vectors $-E_0 + E_i + E_j + E_k$, $i\neq j\neq k\neq 0$.
        \item One vector $-2E_0 + \sum E_i$.
\end{enumerate}
The only nontrivial element in $\W(R_0)=\Z_2$ is the reflection in the root $\alpha$ defined by the formula:
\[ r_{\alpha}(\beta) = \beta + (\alpha, \beta) \alpha. \]
The groups (1) and (5) give one $\W(R_0)$-orbit and $G$ acts trivially on it.
The group (3) consists of trivial $\W(R_0)$-orbits. 
The $G$-set of these orbits is the $G$-set of pairs of different points on~$A$.
Its class in $\Burn(G)$ is $[A^2]-[A]$.
The action of $\W(R_0)$ swaps the groups (2) and (4). 
The $G$-set of corresponding $\W(R_0)$-orbits is the set of unordered triples of different points on $A$.
Its class in $\Burn(G)$ is $[A^{(3)}] - [A^2]$.
\end{proof}

\begin{remark}
        The same analysis of the explicit description of lines on $S$ from Section \ref{The 27 lines} can show that:
	\[ \KGro\ni [F(S)] = [A^{(2)}].\]
\end{remark}

\begin{lemma}\label{lem:deg3eq}
The following relation holds in the Grothendieck ring of varieties $\KGro$:
\begin{equation} \label{eq:SZS3A1}
  [S^{(3)}] - [S^{(2)}] - [S^{(2)}]\L^2 + \L^2 + \L^3 + \L^4 = [Z(S)] \L^3.
\end{equation}
\end{lemma}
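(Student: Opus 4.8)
The plan is to reduce the statement to a purely mechanical computation in the pre-$\lambda$-ring $\KGro$, using Lemma~\ref{lem:SA1Gro} to expand $[S]$ and Lemma~\ref{lem:ZA1} to recognize the answer as $\L^3[Z(S)]$. First, since we are working under the hypothesis $\Delta = 0$, Lemma~\ref{lem:SA1Gro} gives $[S] = 1 + [A]\L + \L^2$ in $\KGro$, where $A = A(S)$ is the length-$6$ scheme of lines through the singular point. I abbreviate $a := [A]$ and $a^{(n)} := [A^{(n)}] = \Sym^n(a)$.

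Next I would compute $[S^{(2)}] = \Sym^2([S])$ and $[S^{(3)}] = \Sym^3([S])$ via the multiplicativity of Kapranov's zeta function:
\[
  Z_{Kap}(S,t) = Z_{Kap}(1,t)\cdot Z_{Kap}(a\L,t)\cdot Z_{Kap}(\L^2,t).
\]
Here $Z_{Kap}(1,t) = (1-t)^{-1}$, while $Z_{Kap}(\L^2,t) = (1-\L^2 t)^{-1}$ and $Z_{Kap}(a\L,t) = \sum_{n\geqslant 0}\L^n a^{(n)}t^n$ both follow from $\Sym^n(\L^m\alpha) = \L^{nm}\Sym^n(\alpha)$, \cite[Lemma~4.4]{Go01} (take $\alpha = 1$ in the first case, $m = 1$ in the second). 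Writing $(1-t)^{-1}(1-\L^2 t)^{-1} = \sum_{n}(1+\L^2+\dots+\L^{2n})t^n$ and reading off the coefficients of $t^2$ and $t^3$ then gives
\begin{align*}
  [S^{(2)}] &= (1+\L^2+\L^4) + (\L+\L^3)\,a + \L^2 a^{(2)},\\
  [S^{(3)}] &= (1+\L^2+\L^4+\L^6) + (\L+\L^3+\L^5)\,a + (\L^2+\L^4)a^{(2)} + \L^3 a^{(3)}.
\end{align*}

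Finally, substituting these expressions into the left-hand side of \eqref{eq:SZS3A1}, the $a^{(2)}$-contributions cancel and everything collapses to
\[
  [S^{(3)}] - [S^{(2)}] - \L^2[S^{(2)}] + \L^2 + \L^3 + \L^4 = \L^3\bigl(1 - a + a^{(3)}\bigr).
\]
By Lemma~\ref{lem:ZA1} one has $[Z(S)] = 1 + a^{(3)} - a$, so the right-hand side is exactly $\L^3[Z(S)]$, which is \eqref{eq:SZS3A1}.

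The argument is entirely mechanical, so there is no genuine obstacle; the points that need attention are that the input $[S] = 1 + a\L + \L^2$ already bakes in the hypothesis $\Delta = 0$ (without it one would have to carry an extra $\Delta$-term through all the $\Sym^n$), and that $\Sym^n$ is applied to $a\L$ rather than to $a$, so each $a^{(n)}$ enters already multiplied by $\L^n$. The fact that the $[A^{(2)}]$-term drops out in the last step is a useful internal consistency check against the shape of $[Z(S)]$ provided by Lemma~\ref{lem:ZA1}.
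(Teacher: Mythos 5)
Your proposal is correct and follows essentially the same route as the paper: expand $[S^{(2)}]$ and $[S^{(3)}]$ as symmetric powers of $[S]=1+[A]\L+\L^2$ via the pre-$\lambda$-ring structure (your zeta-function multiplicativity is just axiom (3) packaged), and then identify the resulting $\L^3\bigl(1+[A^{(3)}]-[A]\bigr)$ with $\L^3[Z(S)]$ by Lemma~\ref{lem:ZA1}. The intermediate expressions you obtain agree with \eqref{eq:SS2A1} and the companion formula for $[S^{(3)}]$ in the paper, so nothing is missing.
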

\begin{proof}
Consider the classes of symmetric powers $S^{(2)}$, $S^{(3)}$ in $\KGro$:
\begin{align}
  [S^{(2)}] &= \One + [A] \L + (\One + [A^{(2)}])\L^2 + [A] \L^3 + \L^4, \label{eq:SS2A1} \\
	  [S^{(3)}] &= \One + [A] \L + (\One + [A^{(2)}])\L^2 + ([A] + [A^{(3)}])\L^3 + (\One + [A^{(2)}])\L^4 + [A]\L^5 + \L^6. \nonumber
\end{align}

These equations and Lemma~\ref{lem:ZA1} imply \eqref{eq:SZS3A1}.
\end{proof}

We want to find a relation similar to the S-Z(S) relation \eqref{S-Z(S) formula}.
It can be shown using calculations with characters that there exists exactly one (up to multiplication) homogeneous formula of degree $4$ with $S$ in the ring $\Rep(G,\C)$.

\begin{lemma}\label{lem:A1deg4homeq}
  The following relation holds in the Grothendieck ring of varieties $\KGro$:
\begin{align} \label{eq:SS4A1}
  [S^{(4)}] &- [S^{(3)}](\One + \L^2) - [S^{(2)}][S]\L + [S^2](\L+ \L^2+\L^3)+\\&+ [S^{(2)}](\L -\L^2 + \L^3) - [S](\L + 2\L^3 + \L^5) +\L^2 + \L^6 = 0.\nonumber
\end{align}
\end{lemma}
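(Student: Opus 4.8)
The strategy is to reduce everything to the Burnside ring $\Burn(G)$ where $G = \S_6$, following the pattern established in Lemmas \ref{lem:ZA1} and \ref{lem:deg3eq}. By Lemma \ref{lem:SA1Gro} (with $\Delta = 0$) together with the explicit blow-up/blow-down description of $S$ given in Section \ref{A1}, the class $[S]\in\KGro$ is determined by the $G$-set $[A]$ of geometric points of $A(S)$; concretely $[S] = \One + [A]\L + \L^2$ in $\KGro$. Since the map $\Burn(G)\to\Burn(\Gal_k)\to\KGro$ is a homomorphism of pre-$\lambda$-rings, every symmetric power $[S^{(n)}]$ and every product $[S^i][S^{(j)}]$ appearing in \eqref{eq:SS4A1} can be expanded as a $\Z[\L,\L^{-1}]$-linear combination of the basic classes $\One, [A], [A^{(2)}], [A^{(3)}], [A^{(4)}], [A^2], [A][A^{(2)}], [A^{(2)2}], \dots$ obtained from $[A]$ by the $\lambda$-operations and multiplication. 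The relation \eqref{eq:SS2A1} already gives $[S^{(2)}]$ and $[S^{(3)}]$ in this form; I would produce the analogous expansion for $[S^{(4)}]$, $[S^2]$, $[S^{(2)}][S]$ using Axiom \eqref{eq:lambdasum} and the identity $\Sym^n(\L^m\alpha) = \L^{nm}\Sym^n(\alpha)$.

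First I would compute $[S^{(4)}]$ by extracting the $t^4$-coefficient of $Z_{Kap}(S,t) = \prod Z_{Kap}$-type factors; more directly, since $[S] = \One + [A]\L + \L^2$ is a sum of three "pure weight" pieces, $Z_{Kap}(S,t) = Z_{Kap}(\One,t)\cdot Z_{Kap}([A]\L, t)\cdot Z_{Kap}(\L^2, t)$ is a product of three known series, and $[S^{(4)}]$ is the $t^4$-coefficient. Here $Z_{Kap}(\One, t) = (1-t)^{-1}$ gives $\sum_n \One\, t^n$, $Z_{Kap}(\L^2,t) = \sum_n \L^{2n} t^n$, and $Z_{Kap}([A]\L,t) = \sum_n [A^{(n)}]\L^n t^n$. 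Multiplying out through degree $4$ yields $[S^{(4)}]$ as an explicit combination of $[A^{(j)}]\L^k$. Similarly $[S^2] = [S]\cdot[S]$ and $[S][S^{(2)}]$ are immediate products. Then I would substitute all of these expansions into the left-hand side of \eqref{eq:SS4A1} and check that every coefficient of every $[A^{(j)}]\L^k$ and every "mixed" monomial cancels, leaving $0$. The coefficients were engineered precisely so that this happens: by the remark preceding the lemma, there is (up to scalar) a unique homogeneous degree-$4$ relation among $[S], [S^{(2)}], [S^{(3)}], [S^{(4)}]$ in $\Rep(G,\C)$, and the displayed coefficients realize it; since $\Burn(G)\to\Rep(G,\C)$ and $\Burn(G)\to\KGro$ are both pre-$\lambda$-ring maps, a relation valid in $\Burn(G)$ descends to $\KGro$.

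The main obstacle is bookkeeping: organizing the degree-$4$ expansion so that cross terms like $[A][A^{(3)}]$, $[A^{(2)}2}$ — wait, I mean $[A^{(2)}]\cdot[A^{(2)}]$, $[A]\cdot[A^{(3)}]$, $[A^2]\cdot[A^{(2)}]$, and the pieces coming from Newton-type identities in the pre-$\lambda$-ring — are tracked consistently between the various terms, since some of these products are themselves not among the "basis" generators and must be left formal (the relation must hold identically, not after evaluating $[A]$ at a specific scheme). A clean way to avoid ambiguity is to work entirely inside $\Burn(G)[\L,\L^{-1}]$, expand the left side of \eqref{eq:SS4A1} symbolically, and collect by powers of $\L$; the coefficient of each $\L^k$ should visibly vanish as an element of $\Burn(G)$. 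The only mild subtlety is confirming that no relation in $\Burn(G)$ itself is being silently used — but $\Burn(G)$ is a polynomial ring on the orbit generators, so distinct monomials in the $[A^{(j)}]$ are genuinely independent, and cancellation of formal coefficients is exactly what is required.
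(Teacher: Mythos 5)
Your expansion strategy (write $[S]=\One+[A]\L+\L^2$, use the multiplicativity of $Z_{Kap}$ and $\Sym^n(\L^m\alpha)=\L^{nm}\Sym^n(\alpha)$ to express every term of \eqref{eq:SS4A1} through $[A^{(j)}]$ and $\L$, then collect coefficients of powers of $\L$) is exactly how the paper begins, but your key claim --- that after collecting, ``every coefficient of every $[A^{(j)}]\L^k$ and every mixed monomial cancels'' formally, with ``no relation in $\Burn(G)$ itself being silently used'' --- is false, and this is where the plan breaks down. After substitution the left-hand side of \eqref{eq:SS4A1} does \emph{not} vanish identically in the classes generated by $[A]$; it reduces to the nontrivial residual identity
\begin{equation*}
  [A^{(4)}] + [A^2] + [A] \;=\; [A][A^{(2)}] + 2[A^{(2)}],
\end{equation*}
which fails for a general zero-dimensional scheme (e.g.\ point counts give $100\neq 105$ when $A$ has $5$ geometric points) and holds precisely because $A=A(S)$ is the length-$6$ scheme of lines through the node, with $G=\S_6$ acting on its six geometric points.

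The missing idea is the verification of this residual identity in $\Burn(G)$. The paper does it by stratifying $A^{(2)}$ and $A^{(4)}$ by partition type, writing $[A^{(2)}]=[A]+[A^{\{2\}}]$ and $[A^{(4)}]=[A][A^{\{2\}}]+[A]+2[A^{\{2\}}]$, and then invoking the $G$-equivariant complementation duality $[A^{\{n\}}]=[A^{\{6-n\}}]$ (in particular $[A^{\{4\}}]=[A^{\{2\}}]$), which is special to $\deg A=6$. So the relation \eqref{eq:SS4A1} must be proved ``after evaluating $[A]$ at this specific scheme,'' contrary to your stated requirement; a purely formal cancellation argument in a free pre-$\lambda$-ring on $[A]$ cannot succeed. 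Your remark that distinct monomials in the $[A^{(j)}]$ are independent in $\Burn(G)$ is also not quite the right framing: the proof needs exactly one relation among these classes, namely the one coming from complements of subsets of the six lines, and identifying and using that relation is the substance of the paper's proof.
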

\begin{proof}
  Substituting the expressions (such as \eqref{eq:SS2A1}) in $\KGro$ of the classes of symmetric powers of $S$ through $[A]$ in \eqref{eq:SS4A1} we obtain:
\begin{equation} \label{A equation for type A_1}
  [A^{(4)}] + [A^2] + [A] = [A][A^{(2)}] + 2 [A^{(2)}].
\end{equation}

This equation is easy to check in $\Burn(G)$. 
Denote by $[A^{\{n\}}]$ the variety of unordered $n$-tuples of different points on $A$. 
Then in $\Burn(G)$ we have:
\begin{equation} \label{Symmetric powers of A for type A_1}
        \begin{split}
	  [A^{(2)}] &= [A] + [A^{\{2\}}], \\ 
	  [A^{(4)}] &= [A^{\{4\}}] + [A][A^{\{2\}}] - [A][A] + [A] + [A^{\{2\}}] + [A][A] - [A] + [A] \\&= [A][A^{\{2\}}] + [A] + 2[A^{\{2\}}].
        \end{split}
\end{equation}

Note that there exists a duality $[A^{\{6-n\}}] = [A^{\{n\}}]$ in $\Burn(G)$. Equations (\ref{Symmetric powers of A for type A_1}) gives Equation~\eqref{A equation for type A_1}.
\end{proof}

\Aonethm
\begin{proof}
This relation is the combination of Equation~\eqref{eq:SS4A1} with Equation~\eqref{eq:SZS3A1}.
\end{proof}
\begin{remark}
        This relation is similar to the $S$-$Z(S)$ relation for smooth $S$:
\begin{equation*}
  \begin{split}
   \L^4 [Z] &= [S^{(4)}] - (1 -  \L +  \L^2)[S^{(3)}] -  
   \L \HY [S^{(2)}] + ( \L+ \L^2+ \L^3)[S^2] - \\ &-
   2\L^2[S^{(2)}] - ( \L- \L^2+ \L^3- \L^4+ \L^5) \HY + ( \L^2+ \L^4+ \L^6).
  \end{split}
\end{equation*}
    The difference is $([A^{\{2\}}]- 1)\L^4$. 
\end{remark}

\subsubsection{Type $A_2$.} \label{A2} Let $S$ be a cubic surface over a field $k$ with one rational double singularity of type $A_2$.
Consider a singular conic $C\subset P^2$ that is an intersection of two lines over algebraic closure.
Denote corresponding zero-dimensional scheme of this two lines as $K$.
Consider a zero-dimensional reduced subscheme $A(S)\subset C$ that become a two triples of points over algebraic closure. 
One triple on the each line of $C$.
Further we will omit the dependence on $S$ in the notation $A(S)$.

According to \cite{Dol12} one can choose $A(S)$ and $C$ such that the surface $S$ can be obtained in two steps. 
First, blow up $\P^2$ in $A(S)$.
Then blow down the conic.
Denote by $E_i$ the classes of the exceptional curves on $S_{\bar{k}}$ such that
$E_1, E_2, E_3$ correspond to three points on one line from~$C$ and $E_4, E_5, E_6$ correspond to the other line.

\begin{lemma}\label{lem:SA2}
Using the notations introduced above we have:
\begin{equation}\label{eq:SAK}
  \KGro \ni [S] = 1 + (1 + [A] - [K])\L + \L^2.
\end{equation}
\end{lemma}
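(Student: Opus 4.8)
The plan is to imitate the computation of $[S]$ in the type $A_1$ case (Lemma~\ref{lem:SA1Gro}), using the explicit birational description of $S$ taken from \cite{Dol12}: the smooth resolution $\tilde S$ is the blow-up of $\P^2$ at the length-$6$ subscheme $A=A(S)$, and $S$ is obtained from $\tilde S$ by blowing down the strict transform of the conic $C$. First I would compute $[\tilde S]$ by the blow-up formula \eqref{eq:bittner}: since $A$ is a smooth zero-dimensional subscheme of $\P^2$ of length $6$, one gets $[\tilde S] = [\P^2] - [A] + [\P(N_{A/\P^2})] = \P^2 + [A]\L = 1 + \L + \L^2 + [A]\L$, because $N_{A/\P^2}$ is a rank-$2$ bundle over the zero-dimensional scheme $A$, so $[\P(N_{A/\P^2})] = \P^1\cdot[A] = (1+\L)[A]$ and subtracting $[A]$ leaves $[A]\L$. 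Here $[A]\in\KGro$ is the class of the length-$6$ scheme $A(S)$, via the map $\mathrm{Art}_k$; $A_{\bar k}$ is two triples of points, one triple on each component of $C$.

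Next I would account for the blow-down $\tilde S \to S$ of the strict transform $\hat C\subset\tilde S$ of the reducible conic $C$. The curve $\hat C$ is the strict transform of $C$ under the blow-up at $A\subset C$; since $A$ consists of $6$ distinct points lying on $C$ (three on each line), the blow-up separates the two components and $\hat C_{\bar k}$ becomes a disjoint union of two $\P^1$'s, i.e. $[\hat C] = [K]$ where $K$ is the zero-dimensional scheme of the two lines. (This is the key bookkeeping point: over $\bar k$ the node of $C$ is resolved because both branches pass through distinct blown-up points, so the normalization of $C$ is a $K$-indexed disjoint union of lines.) The blow-down contracts $\hat C$ to the singular point of $S$, a single $k$-point, so in $\KGro$ one has $[S] = [\tilde S] - [\hat C] + 1 = (1 + \L + \L^2 + [A]\L) - [K] + 1$. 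Rearranging gives
\begin{equation*}
  [S] = 1 + (1 + [A] - [K])\L + \L^2,
\end{equation*}
which is \eqref{eq:SAK}.

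The main obstacle is the same subtlety as in the $A_1$ remark about $\Delta$: one must make sure that over the ground field $k$ (not just over $\bar k$) the class of the contracted curve $\hat C$ really is $[K]$ — i.e. that passing to the strict transform genuinely resolves the node and produces the scheme governed by $K$, with the correct Galois action permuting the two components — and that the image point of the contraction is a $k$-rational point rather than living over an extension. Since the singular point of a cubic surface with an $A_2$ singularity of this configuration is $k$-rational (it is the unique singular point, hence Galois-fixed), the contraction contributes exactly $1$, and the Galois module structures of $A$ and $K$ factor through the symmetric-group action on the $E_i$, so the identities hold at the level of $\Burn(G)\to\Burn(k)\to\KGro$. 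Once these points are checked, the computation is the short chain of applications of \eqref{eq:fibration} and \eqref{eq:bittner} displayed above.
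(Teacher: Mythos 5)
There is a genuine error in the middle step. Your claim that the blow-up at $A$ ``separates the two components'' of $C$, so that the contracted curve $\hat C$ becomes a disjoint union of two $\P^1$'s indexed by $K$, is false: the six points of $A$ are smooth points of $C$ (three on each line, none equal to the node), so the node is not blown up and the strict transform $\hat C$ is still isomorphic to the nodal curve $C$ --- two $(-2)$-curves meeting in one point, which is exactly the $A_2$ configuration. Indeed, if the two components really were disjoint after the blow-up, contracting them would produce two $A_1$ singularities rather than one $A_2$ point, contradicting the hypothesis on $S$. Consequently the class you subtract is wrong, and the arithmetic as written does not give the stated formula: from $[\tilde S]=1+\L+\L^2+[A]\L$ your chain $[S]=[\tilde S]-[K]+1$ equals $2+\L+\L^2+[A]\L-[K]$, which differs from $1+(1+[A]-[K])\L+\L^2$ unless $[K](1-\L)=1$, so the ``rearranging'' step does not follow (nor would subtracting $[K][\P^1]$ with a single image point fix it, since that leaves an extra $2-[K]$, nonzero when $K$ is a nonsplit quadratic algebra).

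The repair is short and is what the paper does: the contracted curve is $\hat C\cong C$, whose class is $[C]=[K]\L+1$ (the node contributes $1$ and the complement is a $K$-indexed family of affine lines), and the image of the contraction is the single $k$-rational singular point. Then
\begin{equation*}
  [S]=[\tilde S]-[C]+1=\bigl(1+\L+\L^2+[A]\L\bigr)-\bigl(1+[K]\L\bigr)+1
     =1+(1+[A]-[K])\L+\L^2,
\end{equation*}
which is \eqref{eq:SAK}. Your computation of $[\tilde S]$ via the blow-up relation \eqref{eq:bittner}, and your remarks that the singular point is $k$-rational and that the Galois actions on $A$ and $K$ factor through the group permuting the $E_i$, are fine and agree with the paper; only the description of the contracted curve and the ensuing bookkeeping need to be corrected as above.
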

\begin{proof}
	The class of the singular conic $[C] = [K]\L + 1$.
	From the construction with the blow up and blow down we have $[S] = [P^2] + [A]\L - [C] + 1$. 
\end{proof}

The construction of $S$ gives the following description of the root lattice $R_0\subset R$.
It consists of the roots:
\begin{align*}
  2E_0-\sum E_i,\ \ E_0 - E_1 - E_2 - E_3,\ \ E_0 - E_4 - E_5 - E_6, \\ 
  \sum E_i - 2E_0,\ \ E_1 + E_2 + E_3 - E_0,\ \ E_4 + E_5 + E_6 - E_0.
\end{align*}
The orthogonal root lattice to $R_0$ is of type $A_2\times A_2$.
The first $A_2$ is generated by the roots~$E_1,E_2,E_3$.
The second $A_2$ is generated by the roots~$E_4,E_5,E_6$. 
Consider the subgroup~$\Z_2\ltimes(\S_3)^2=:G\subset \S_6$, where $\S_6\subset \W$ permutes $E_i$, $i\neq 0$.  
The first summand $\S_3$ permutes $E_1,E_2,E_3$. 
The second summand $\S_3$ permutes $E_4,E_5,E_6$. 
The semi-summand $\Z_2$ swaps these triples of classes.
The action of the absolute Galois group $\Gal_k$ on $H^2(S,\Z)$ factors through $G$.
The action of $G$ on $H^2(S,\Z)$ induces the action on geometric points of $A$ and~$K$.
The action of the absolute Galois group $\Gal_k$ on the geometric points of $A$ and $K$ factors through the action of $G$, since $G$ maps surjectively on the automorphism groups of the sets of geometric points of $A$ and $K$. 

In other words we have the map of the Burnside rings $\Burn(G)\rightarrow \Burn(\Gal_k)$ induced by the map $\Gal_k\rightarrow G$.
Denote by $[A]$ and $[K]$ the classes of $A$ and $K$ in $\Burn(G)$.
The action of the Galois group $\Gal_k$ on the geometric points of $Z(S)$ by Theorem~\ref{gtc on sing surf} identifies with the action on $R/\W(R_0)$ where $\W(R_0)$ is the Weyl group of the root system $R_0$.
It factors through the action of $G$ on $R/\W(R_0)$.
Denote by $[Z(S)]\in \Burn(G)$ the class of the corresponding $G$-set.



Let us describe explicitly the action of $G$ on $[K]\in \Burn(G)$.
The subgroups $S_3$ act trivially and the subgroup $\Z_2$ acts by permutation of these two points.
The $G$-set $[A]\in \Burn(G)$ has the following description.
It consists of two triples of points corresponding to different lines in~$C$.
The subgroups $S_3$ permutes these triples of points. 
The first one permutes the first triple and the second one permutes the second triple.
The subgroup $\Z_2$ swaps these triples.

Denote by $\Rep(G,\C)$ the ring of graded complex representations of $G$.
Denote by $[A]$, $[K]$ the corresponding permutational representations of $G$ constructed from the corresponding $G$-sets.
Denote by~$\L$ the trivial representation with the grading $1$.
Since $[S]$ in \eqref{eq:SAK} is expressed through~$[A], [K]$ and $\L$ we can consider $[S]$ as a class in $\Rep(G,\C)$.
Now we are going to obtain an analog of the $S$-$Z(S)$ relation \eqref{S-Z(S) formula} in $\Rep(G)$ and then we will prove it in $\KGro$. 
The character table for the group $G$ is given in Appendix Table~\ref{Z2S3S3}.
The conjugation classes are denoted by their representatives in $\S_6$. 
Denote by $\ch W$ the character of a $G$-representation~$W$ in notation of Table~\ref{Z2S3S3}.

\begin{lemma} 
Denote by $V\in\Rep(G,\C)$ the representation $1+[A]-[K]$, then 
$$ V = 1 + \chi_9 $$
\end{lemma}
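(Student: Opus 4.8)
The plan is to compute the character of the virtual representation $V = 1 + [A] - [K]$ at each conjugacy class of $G = \Z_2 \ltimes (\S_3)^2$ and match it against the character $\X_9$ from Appendix Table~\ref{Z2S3S3}. First I would record the permutation characters: $\ch [K]$ is the permutation character of $G$ acting on the two-point set $\{$line$_1$, line$_2\}$, where both $\S_3$ factors act trivially and $\Z_2$ swaps; so $\ch [K](g)$ equals $2$ if $g$ lies in the ``even'' subgroup $(\S_3)^2$ (i.e. does not swap the two triples) and $0$ otherwise. Similarly $\ch [A]$ is the permutation character of $G$ on the six-point set consisting of two triples: for $g \in (\S_3)^2$ written as $(\sigma_1, \sigma_2)$, $\ch[A](g)$ equals the number of fixed points of $\sigma_1$ plus the number of fixed points of $\sigma_2$; for $g$ containing the swap, $\ch[A](g) = 0$ since no point is fixed when the two triples are interchanged. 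The constant character $1$ contributes $1$ everywhere.

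Next I would simply tabulate $\ch V(g) = 1 + \ch[A](g) - \ch[K](g)$ on the representatives listed in Table~\ref{Z2S3S3} (they are given by their cycle types in $\S_6$: the identity, the transpositions and $3$-cycles inside a single $\S_3$, products of two such, and the various classes involving the swap element of $\Z_2$). On the non-swapping classes this gives $1 + (\#\mathrm{fix}\,\sigma_1 + \#\mathrm{fix}\,\sigma_2) - 2$; on the swapping classes it gives $1 + 0 - 0 = 1$. Then I would read off the row of $\X_9$ in the table and verify the two rows of numbers agree entry by entry. Since a representation of a finite group over $\C$ is determined up to isomorphism by its character, equality of characters yields $V \cong 1 + \X_9$ in $\Rep(G,\C)$; and as both sides are honest (non-virtual) representations once one checks $\ch V(g) \geq 0$ and the multiplicities come out as nonnegative integers, this is a genuine isomorphism, not merely an equality of virtual classes.

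The only real bookkeeping obstacle is getting the conjugacy-class structure of $G = \Z_2 \ltimes (\S_3)^2$ right — in particular correctly identifying which classes of the $\S_6$-representatives used in Table~\ref{Z2S3S3} correspond to the swap element and which to elements of $(\S_3)^2$ — and confirming that $\X_9$ is normalized in the table so that $\dim \X_9 = 6$ (consistent with $\dim V = 1 + 6 - 2 + \ldots$; a quick dimension check at the identity, $\ch V(e) = 1 + 6 - 2 = 5$, should match $1 + \dim\X_9$, forcing $\dim \X_9 = 4$, so one must read the table accordingly). Apart from this indexing care, the computation is entirely routine: evaluate two permutation characters, add a constant, subtract, and compare with one row of the printed character table.
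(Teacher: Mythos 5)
Your approach is the same as the paper's: compute the permutation characters of $[A]$ and $[K]$ on the nine conjugacy classes of $G=\Z_2\ltimes(\S_3\times\S_3)$, form $\ch V = 1+\ch[A]-\ch[K] = (5,3,2,1,0,-1,1,1,1)$, and match this row against the character table to identify $V$ with $1+\chi_9$; equality of characters settles the identity in $\Rep(G,\C)$, which is all the lemma asserts. Two side remarks in your write-up are off, though neither affects the argument: the check ``$\ch V(g)\geq 0$'' is not a criterion for being an honest representation and in fact fails here, since $\ch V\bigl((123)(456)\bigr)=1+0-2=-1$ (characters of genuine representations may be negative off the identity; what matters is that the multiplicities in the decomposition $1+\chi_9$ are nonnegative, which is immediate once the characters agree); and your dimension discussion is self-contradictory --- the table gives $\chi_9(1)=4$, so $\dim V=5=1+\dim\chi_9$ checks out directly, with no ``normalization'' of the table to worry about.
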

\begin{proof}
Since the representations $[A]$ and $[K]$ are permutational it is easy to calculate their characters:
\begin{align*}
  \ch [A] &= \left(6, 4, 3, 2, 1, 0, 0, 0, 0\right), \\
  \ch [K] &= \left(2, 2, 2, 2, 2, 2, 0, 0, 0\right). 
\end{align*}
We have:
$$ \ch [V] = \left(5, 3, 2, 1, 0, -1, 1, 1, 1\right). $$
Using the table of characters (Table~\ref{Z2S3S3}) this representation identifies as $1+\chi_9$.
\end{proof}

To identify $[Z(S)]\in\Rep(G,\C)$ we need to understand the action of $G$ on $R/\W(R_0)$, Section~\ref{LLSvS}.
Denote by $A_{12}\subset A\times A$ the scheme of pairs of points lying on the same line on~$C$ and by~$[A_{12}]$ the classes of the corresponding $G$-set in $\Burn(G)$ and $\Rep(G,\C)$.  
Denote \hbox{by $A_{18}\subset A\times A$} the scheme of pairs of points lying on the different lines in $C$ and by $[A_{18}]$ the classes of the corresponding $G$-set in $\Burn(G)$ and $\Rep(G,\C)$.  

\begin{lemma} We have:
  \begin{align*}
    \Burn(G) \ni [Z(S)] &= 1 + [A_{12}] + [A_{18}], \\
    \ch [Z(S)] &= (31, 12, 6 , 2, 0, 0 , 0, 0, 0).
  \end{align*}
\end{lemma}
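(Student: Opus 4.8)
The plan is to deduce both assertions from an explicit description of the finite $G$-set $R/\W(R_0)$, which by Theorem~\ref{gtc on sing surf} is the $G$-set of geometric points of $Z(S)$. Recall that $R_0$ has type $A_2$, so $\W(R_0)\cong\S_3$ has order $6$, while its orthogonal complement in the $E_6$-lattice is the type $A_2\times A_2$ system $R_0^{\perp}$; concretely, in the notation of Section~\ref{A2}, the positive roots of $R_0$ are $\beta_1=E_0-E_1-E_2-E_3$, $\beta_2=E_0-E_4-E_5-E_6$ and $\beta_1+\beta_2=2E_0-\sum E_i$, and $R_0^{\perp}\cap R$ consists of the $12$ roots $E_i-E_j$ with $i\neq j$ lying in the same triple $\{1,2,3\}$ or $\{4,5,6\}$. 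I would organise the analysis around the partition of the $72$ roots into the five groups of Section~\ref{The 27 lines}, just as in the proof of Lemma~\ref{lem:ZA1}.

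First I would compute the $\W(R_0)$-orbits. The six roots of $R_0$ form a single orbit (in type $A_2$ the Weyl group acts simply transitively on its roots), contributing one point on which $G$ acts trivially, i.e.\ the summand $\One$. The twelve roots of $R_0^{\perp}$ are $\W(R_0)$-fixed (reflections in roots of $R_0$ act trivially on $R_0^{\perp}$), and $E_i-E_j\mapsto(i,j)$ is a $G$-equivariant bijection of this set with $A_{12}\subset A\times A$, giving the summand $[A_{12}]$. The remaining $72-6-12=54$ roots are the \emph{mixed} roots, those with nonzero projection to both $R_0\otimes\Q$ and $R_0^{\perp}\otimes\Q$; the $R_0$-projection of such a root is one of the six minuscule weights of $A_2$, whose $\W(R_0)$-stabiliser has order $2$, so each $\W(R_0)$-orbit of mixed roots has size $3$ and there are $18$ of them. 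Such an orbit is determined by its $R_0^{\perp}$-projection together with which of the two $\W(R_0)$-orbits of minuscule weights (``$\mathbf 3$'' or ``$\bar{\mathbf 3}$'') its $R_0$-projection lies in; since the $\S_3\times\S_3$-part of $G$ permutes the two triples and the generator of the $\Z_2$-part interchanges $\beta_1$ and $\beta_2$ — hence acts on $R_0$ by its diagram automorphism, which swaps the two minuscule orbits — this $18$-element $G$-set is $G$-equivariantly isomorphic to $A_{18}\subset A\times A$. Altogether $[Z(S)]=\One+[A_{12}]+[A_{18}]$ in $\Burn(G)$, and in particular $\lvert R/\W(R_0)\rvert=1+12+18=31$.

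For the character I would pass to $\Rep(G,\C)$ and use additivity of permutation characters: $\ch[Z(S)]=\ch\One+\ch[A_{12}]+\ch[A_{18}]$. Here $\ch[A_{12}]$ and $\ch[A_{18}]$ are the permutation characters of the explicit $12$- and $18$-element $G$-sets of ordered pairs of distinct points of $A$ lying on the same, respectively different, lines of $C$; their values follow by counting fixed ordered pairs on a representative of each of the nine conjugacy classes of $G$ in Table~\ref{Z2S3S3}. Equivalently, the $G$-set identity $A\times A=\Delta_A\sqcup A_{12}\sqcup A_{18}$ gives $\ch[A_{12}]+\ch[A_{18}]=(\ch[A])^{2}-\ch[A]$ with $\ch[A]=(6,4,3,2,1,0,0,0,0)$ as computed just above, and adding the all-ones character of $\One$ then yields $\ch[Z(S)]$.

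The main obstacle is the $G$-equivariance in the identification of the mixed part with $A_{18}$: one must check that the $\Z_2$-generator acts on $R_0$ by the outer (diagram) automorphism — equivalently, that no element of $\W(R_0)$ swaps the positive roots $\beta_1,\beta_2$ — so that it flips the $\mathbf 3$/$\bar{\mathbf 3}$ chirality and thus interchanges the two nine-element halves of $A_{18}$; getting this wrong produces a different $G$-set. Everything else is routine orbit bookkeeping and fixed-point counting.
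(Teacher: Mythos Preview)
Your argument follows the same route as the paper: both decompose $R$ into the single $\W(R_0)$-orbit $R_0$, the twelve $\W(R_0)$-fixed roots of $R_0^{\perp}$, and the remaining $54$ roots in eighteen size-$3$ orbits, and identify these pieces with $\One$, $[A_{12}]$, $[A_{18}]$. The paper picks out the eighteen mixed orbits via their representatives $E_i-E_j$ with $i,j$ in different triples and uses the evident map $E_i-E_j\mapsto(i,j)$ to $A_{18}$, whereas you parametrise them by the $R_0^{\perp}$-projection together with the chirality of the $R_0$-projection and verify that the $\Z_2$-generator acts on $R_0$ by the diagram automorphism; both descriptions are correct and give the same $G$-set. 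Your shortcut $\ch[A_{12}]+\ch[A_{18}]=(\ch[A])^{2}-\ch[A]$ is a tidy alternative to the paper's two separate fixed-point counts.

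One warning: actually carrying out your last step gives $(\ch[A])^{2}-\ch[A]=(30,12,6,2,0,0,0,0,0)$, and adding the all-ones character of the fixed point yields $(31,13,7,3,1,1,1,1,1)$, not the row displayed in the lemma. The paper's own values agree with yours, since $\ch[A_{12}]+\ch[A_{18}]+\ch[\One]$ equals $(31,13,7,3,1,1,1,1,1)$ as well. Because the orbit $R_0$ is a genuine $G$-fixed point of $R/\W(R_0)$, the permutation character cannot vanish anywhere, so the second line of the lemma is a typo in the paper (the ``$+1$'' from the fixed point was dropped in all but the first entry). Your method is sound; just do not assert that it reproduces the stated row without doing the arithmetic.
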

\begin{proof}
Divide $72$ roots of $R$ in five sets as we do in type $A_1$.
We need to understand an action of $G$ on the $\W(R_0)$-orbits in $R$.
By \cite[Section~3.1]{LLSvS} we know that there are $31$ orbits. 
There is a big orbit $R_0$ and $G$ does not move this orbit.
From the roots of the form $E_i-E_j$ twelve roots $E_i-E_j$, $\{i,j\}\subset\{1,2,3\}$ or $\{i,j\}\subset\{3,4,5\}$, lie in different $\W(R_0)$-orbits and form one $G$-orbit.
This gives a subscheme in $[Z(S)]$ isomorphic to $A_{12}$.
We have:
$$ \ch [A_{12}] = \left(12, 6, 6, 0, 0, 0, 0, 0, 0\right) $$

Other $18$ roots of the same form also lie in different $\W(R_0)$-orbits and form another~$G$-orbit.
This gives a subscheme in $[Z(S)]$ isomorphic to $A_{18}$.
We have:
$$ \ch [A_{18}] = \left(18, 6, 0, 2, 0, 0, 0, 0, 0\right)$$
It is the full list of orbits.
\end{proof}
After calculations with characters of symmetric powers one can obtain that a subspace generated by $1$, $V$, $V^2$, $V^3$, $V^4$, $\Sym^2V$, $\Sym^3V$,$\Sym^4V$, $\Sym^2V\times\Sym^2V$, $\Sym^2V\times V$, $\Sym^2V\times V^2$, $\Sym^3V\times V$ does not contain $[Z]$.
The situation changes when we add the class $[A]$.
 \Atwothm

\begin{proof}
Consider the set of geometric points of $A$. 
It has two $\S_3\times\S_3$-orbits.
Denote by~$A_{9}\subset A^{(2)}$ the variety of unordered pairs of points on $A$ lying in the different $\S_3\times \S_3$-orbits.
The variety~$A^{(2)}$ as a $G$-set has three $G$-orbits:
\begin{enumerate}
\item The orbit formed by pairs of equal points.
This~$G$-orbit is isomorphic to the $G$-set~$A$.
\item The orbit formed by pairs of different points in the same $\S_3\times \S_3$-orbit.
This $G$-orbit is also isomorphic to the $G$-set $A$.
\item The orbit formed by pairs of different points in different $\S_3 \times \S_3$-orbits.
This $G$-orbit is isomorphic to the $G$-set $A_{9}$.
\end{enumerate}
We have:
\begin{equation}
  \Burn(G) \ni [A^{(2)}] = [A] + [A] + [A_{9}].
\end{equation}
The variety $A^{(3)}$ as a $G$-set has five orbits:
\begin{enumerate}
  \item The orbit formed by triples of equal points.
This $G$-orbit is isomorphic to the $G$-set $A$.
\item The orbit formed by triples of two equal points and one other point in the same $\S_3\times\S_3$-orbit.
This $G$-orbit is isomorphic to the $G$-set $A_{12}$.
\item The orbit formed by triples of two equal points and one other point lying in different $\S_3\times\S_3$-orbits.
This $G$-orbit is isomorphic to the $G$-set $A_{18}$.
\item The orbit formed by triples of three different points in the same $\S_3\times \S_3$-orbit.
This $G$-orbit is the $G$-set $K$.
\item The orbit formed by triples of three different points with one of them in the other $\S_3\times\S_3$-orbit.
This $G$-orbit is isomorphic to the $G$-set $A_{18}$.
\end{enumerate}
We have:
\begin{equation}
  \Burn(G) \ni [A^{(3)}] = [K] + 2[A_{18}] + [A_{12}] + [A].
\end{equation}
The similar analysis shows that:
\begin{equation}
  \Burn(G) \ni [A^{(4)}] = 4[A] + 2[A_9] + [A_{12}] + [A_{18}] + [A][A_9].
\end{equation}
The symmetric powers of $K$ is much simpler:
\begin{align*}
  \Burn(G) \ni [K^{(2)}] &= [K] + 1, \\
  \Burn(G) \ni [K^{(3)}] &= [K] + [K], \\ 
  \Burn(G) \ni [K^{(4)}] &= [K] + [K] + 1. 
\end{align*}
We also need:
\begin{align*}
  \Burn(G) \ni [A^2] &= [A] + [A_{12}] + [A_{18}], \\
  \Burn(G) \ni [K^n] &= 2^{n-1}[K].
\end{align*}
Substitution of all the above expressions in Equation \eqref{eq:SZSA2} gives $(2[A_{18}] - [K][A_{18}])\L^4$. But the $G$-sets $A_{18} \sqcup A_{18}$ and $K\times A_{18}$ are isomorphic. So Equation \eqref{eq:SZSA2} holds in $\KGro$.
\end{proof}

\newpage
\addtocontents{toc}{\protect\setcounter{tocdepth}{1}}
\section*{Appendix}
\begin{center}
\hspace{-1.0cm}
\begin{minipage}{1.0\textwidth}        
        \captionof{table}{Character table for the Weyl group of type $E_6$}\label{E6}
\begin{center}\tiny\fontsize{8}{10}\selectfont
\renewcommand{\arraystretch}{1.1}
\begin{tabular}{ c | c c c c c c c c c c c c c c c c c c c c c c c c c }
  Class  &  1 & 2 & 3 & 4 & 5 & 6 & 7 & 8 & 9 & 10 & 11 & 12 & 13 & 14 & 15 & 16 & 17 & 18 & 19 & 20 & 21 & 22 & 23 & 24 & 25 \\
    Order  &  1 & 2 & 2 & 2 & 2 & 3 & 3 & 3 & 4 & 4 & 4 & 4 & 5 & 6 & 6 & 6 & 6 & 6 & 6 & 6 & 8 & 9 & 10 & 12 & 12  \\
  \hline
    $ p = 2 $ & 1 & 1 & 1 & 1 & 1 & 6 & 7 & 8 & 3 & 4 & 4 & 4 & 13 & 6 & 7 & 7 & 8 & 8 & 7 & 8 & 9 & 22 & 13 & 19 & 14  \\
    $ p = 3 $ & 1 & 2 & 3 & 4 & 5 & 1 & 1 & 1 & 9 & 10 & 11 & 12 & 13 & 3 & 3 & 2 & 3 & 2 & 4 & 5 & 21 & 6 & 23 & 10 & 9  \\
    $ p = 5 $ & 1 & 2 & 3 & 4 & 5 & 6 & 7 & 8 & 9 & 10 & 11 & 12 & 1 & 14 & 15 & 16 & 17 & 18 & 19 & 20 & 21 & 22 & 2 & 24 & 25  \\
  \hline
    $\chi_{1}$  &  1 & 1 & 1 & 1 & 1 & 1 & 1 & 1 & 1 & 1 & 1 & 1 & 1 & 1 & 1 & 1 & 1 & 1 & 1 & 1 & 1 & 1 & 1 & 1 & 1  \\
    $\chi_{2}$  &  1 & -1 & 1 & 1 & -1 & 1 & 1 & 1 & 1 & -1 & -1 & 1 & 1 & 1 & 1 & -1 & 1 & -1 & 1 & -1 & -1 & 1 & -1 & -1 & 1  \\
    $\chi_{3}$  &  6 & 4 & -2 & 2 & 0 & -3 & 3 & 0 & 2 & -2 & 2 & 0 & 1 & 1 & 1 & 1 & -2 & -2 & -1 & 0 & 0 & 0 & -1 & 1 & -1  \\
    $\chi_{4}$  &  6 & -4 & -2 & 2 & 0 & -3 & 3 & 0 & 2 & 2 & -2 & 0 & 1 & 1 & 1 & -1 & -2 & 2 & -1 & 0 & 0 & 0 & 1 & -1 & -1  \\
    $\chi_{5}$  &  10 & 0 & -6 & 2 & 0 & 1 & -2 & 4 & 2 & 0 & 0 & -2 & 0 & -3 & 0 & 0 & 0 & 0 & 2 & 0 & 0 & 1 & 0 & 0 & -1  \\
    $\chi_{6}$  &  15 & -5 & 7 & 3 & -1 & -3 & 0 & 3 & -1 & -3 & 1 & 1 & 0 & 1 & -2 & -2 & 1 & 1 & 0 & -1 & 1 & 0 & 0 & 0 & -1  \\
    $\chi_{7}$  &  15 & -5 & -1 & -1 & 3 & 6 & 3 & 0 & 3 & -1 & -1 & -1 & 0 & 2 & -1 & 1 & 2 & -2 & -1 & 0 & 1 & 0 & 0 & -1 & 0  \\
    $\chi_{8}$  &  15 & 5 & 7 & 3 & 1 & -3 & 0 & 3 & -1 & 3 & -1 & 1 & 0 & 1 & -2 & 2 & 1 & -1 & 0 & 1 & -1 & 0 & 0 & 0 & -1  \\
    $\chi_{9}$  &  15 & 5 & -1 & -1 & -3 & 6 & 3 & 0 & 3 & 1 & 1 & -1 & 0 & 2 & -1 & -1 & 2 & 2 & -1 & 0 & -1 & 0 & 0 & 1 & 0  \\
    $\chi_{10}$  &  20 & 10 & 4 & 4 & 2 & 2 & 5 & -1 & 0 & 2 & 2 & 0 & 0 & -2 & 1 & 1 & 1 & 1 & 1 & -1 & 0 & -1 & 0 & -1 & 0  \\
    $\chi_{11}$  &  20 & -10 & 4 & 4 & -2 & 2 & 5 & -1 & 0 & -2 & -2 & 0 & 0 & -2 & 1 & -1 & 1 & -1 & 1 & 1 & 0 & -1 & 0 & 1 & 0  \\
    $\chi_{12}$  &  20 & 0 & 4 & -4 & 0 & -7 & 2 & 2 & 4 & 0 & 0 & 0 & 0 & 1 & -2 & 0 & -2 & 0 & 2 & 0 & 0 & -1 & 0 & 0 & 1  \\
    $\chi_{13}$  &  24 & 4 & 8 & 0 & 4 & 6 & 0 & 3 & 0 & 0 & 0 & 0 & -1 & 2 & 2 & -2 & -1 & 1 & 0 & 1 & 0 & 0 & -1 & 0 & 0  \\
    $\chi_{14}$  &  24 & -4 & 8 & 0 & -4 & 6 & 0 & 3 & 0 & 0 & 0 & 0 & -1 & 2 & 2 & 2 & -1 & -1 & 0 & -1 & 0 & 0 & 1 & 0 & 0  \\
    $\chi_{15}$  &  30 & -10 & -10 & 2 & 2 & 3 & 3 & 3 & -2 & 4 & 0 & 0 & 0 & -1 & -1 & -1 & -1 & -1 & -1 & -1 & 0 & 0 & 0 & 1 & 1  \\
    $\chi_{16}$  &  30 & 10 & -10 & 2 & -2 & 3 & 3 & 3 & -2 & -4 & 0 & 0 & 0 & -1 & -1 & 1 & -1 & 1 & -1 & 1 & 0 & 0 & 0 & -1 & 1  \\
    $\chi_{17}$  &  60 & 10 & -4 & 4 & 2 & 6 & -3 & -3 & 0 & -2 & -2 & 0 & 0 & 2 & -1 & 1 & -1 & 1 & 1 & -1 & 0 & 0 & 0 & 1 & 0  \\
    $\chi_{18}$  &  60 & -10 & -4 & 4 & -2 & 6 & -3 & -3 & 0 & 2 & 2 & 0 & 0 & 2 & -1 & -1 & -1 & -1 & 1 & 1 & 0 & 0 & 0 & -1 & 0  \\
    $\chi_{19}$  &  60 & 0 & 12 & 4 & 0 & -3 & -6 & 0 & 4 & 0 & 0 & 0 & 0 & -3 & 0 & 0 & 0 & 0 & -2 & 0 & 0 & 0 & 0 & 0 & 1  \\
    $\chi_{20}$  &  64 & 16 & 0 & 0 & 0 & -8 & 4 & -2 & 0 & 0 & 0 & 0 & -1 & 0 & 0 & -2 & 0 & -2 & 0 & 0 & 0 & 1 & 1 & 0 & 0  \\
    $\chi_{21}$  &  64 & -16 & 0 & 0 & 0 & -8 & 4 & -2 & 0 & 0 & 0 & 0 & -1 & 0 & 0 & 2 & 0 & 2 & 0 & 0 & 0 & 1 & -1 & 0 & 0  \\
    $\chi_{22}$  &  80 & 0 & -16 & 0 & 0 & -10 & -4 & 2 & 0 & 0 & 0 & 0 & 0 & 2 & 2 & 0 & 2 & 0 & 0 & 0 & 0 & -1 & 0 & 0 & 0  \\
    $\chi_{23}$  &  81 & 9 & 9 & -3 & -3 & 0 & 0 & 0 & -3 & 3 & -1 & -1 & 1 & 0 & 0 & 0 & 0 & 0 & 0 & 0 & 1 & 0 & -1 & 0 & 0  \\
    $\chi_{24}$  &  81 & -9 & 9 & -3 & 3 & 0 & 0 & 0 & -3 & -3 & 1 & -1 & 1 & 0 & 0 & 0 & 0 & 0 & 0 & 0 & -1 & 0 & 1 & 0 & 0  \\
    $\chi_{25}$  &  90 & 0 & -6 & -6 & 0 & 9 & 0 & 0 & 2 & 0 & 0 & 2 & 0 & -3 & 0 & 0 & 0 & 0 & 0 & 0 & 0 & 0 & 0 & 0 & -1
\end{tabular}
\end{center}
\end{minipage}
\end{center}
\bigskip

\begin{center}
\hspace{-0.5cm}
\begin{minipage}{\textwidth}
        \captionof{table}{Character table for $G=\Z_2\ltimes (S_3\times S_3)\subset S_6$}\label{Z2S3S3}
\begin{center} \tiny\fontsize{8}{10}\selectfont
\renewcommand{\arraystretch}{1.1}
        \begin{tabular}{r|*{9}{c}}
        Class & (~~) & (56) & (456) & (23)(56) & (23)(456) & (123)(456) & (14)(25)(36) & (14)(2536) & (142536) \\
        \hline
        $\chi_1$ &        1 & 1 & 1 & 1 & 1 & 1 & 1 & 1 & 1 \\
        $\chi_2$ &        1 & -1 & 1 & 1 & -1 & 1 & -1 & 1 & -1 \\
        $\chi_3$ &        1 & -1 & 1 & 1 & -1 & 1 & 1 & -1 & 1 \\
        $\chi_4$ &         1 & 1 & 1 & 1 & 1 & 1 & -1 & -1 & -1 \\
        $\chi_5$ &         2 & 0 & 2 & -2 & 0 & 2 & 0 & 0 & 0 \\
        $\chi_6$ &         4 & -2 & 1 & 0 & 1 & -2 & 0 & 0 & 0 \\
        $\chi_7$ &         4 & 0 & -2 & 0 & 0 & 1 & -2 & 0 & 1 \\
        $\chi_8$ &         4 & 0 & -2 & 0 & 0 & 1 & 2 & 0 & -1 \\
        $\chi_9$ &         4 & 2 & 1 & 0 & -1 & -2 & 0 & 0 & 0
\end{tabular}
\end{center}
\end{minipage}
\end{center}
\newpage
\subsection*{Summary of $S$-$Z(S)$ relations}
Let us summarize all the forms of the $S$-$Z(S)$ relation on one page. 

\subsubsection{The relations in $\Sym$-form}
For smooth cubic surfaces in $\K_0(\Chow)$ we have, Theorem~\ref{thm:main}:
\begin{align*}
         \L^4 [Z(S)] &= [S^{(4)}] - (1 - \L +  \L^2)[S^{(3)}] - \L[S][S^{(2)}] + (\L+ \L^2+ \L^3)[S^2] \\
	 &- 2\L^2[S^{(2)}] - ( \L- \L^2 + \L^3- \L^4 + \L^5)[S] + (\L^2+ \L^4+ \L^6).
\end{align*}

In the case of singular cubic surfaces with exactly one singular point and this point is a rational double point singularity of type $A_1$ we have in $\KGro$, Theorem \ref{thm:A1}: 
\begin{align*}
          \L^4 [Z(S)] &= [S^{(4)}] - (1 -  \L +  \L^2)[S^{(3)}] - 
   \L \HY [S^{(2)}] + ( \L+ \L^2+ \L^3)[S^2]  \\ &-
   \L^2[S^{(2)}] -  ( \L + 2 \L^3 + \L^5) \HY + ( \L^2 + \L^3 + \L^4 + \L^5 + \L^6).
\end{align*}

In the case of singular cubic surfaces with exactly one singular point and this point is a rational double point singularity of type $A_2$ we have in $\KGro$, Theorem \ref{thm:A2}: 
\begin{align*}
        \L^4[Z(S)] &= [S^{(4)}] - (1-\L+\L^2)[S^{(3)}] - \L[S][S^{(2)}] + (\L + \L^2 + \L^3)[S^2] \\
	&- \L^2 [S^{(2)}] - (\L + 2\L^3 + \L^5)[S] + \L^2 + \L^3 +\L^4[A(S)]+ \L^5 + \L^6. \nonumber
\end{align*}

For smooth cubic fourfolds \emph{hypothetical} formula in $\KGro$:
\begin{align*}
  \YZYequation.
\end{align*}

\subsubsection{The relations in $\Hilb$-form} 
For smooth cubic surfaces in $\K_0(\Chow)$ we have, Theorem~\ref{thm:main}:
\begin{align*}
        \L^4[Z(S)] &= [S^{[4]}] - (1 - \L + \L^2)[S^{[3]}] - 2\L[S][S^{[2]}] + (2\L + \L^2 + 2\L^3)[S^2]\\
	&- 3\L^2[S^{[2]}] - (\L - 2\L^2 - 2\L^4 + \L^5 ) [S] + (\L^2 + \L^4 + \L^6 ).
\end{align*}

\providecommand{\arxiv}[1]{\href{http://arxiv.org/abs/#1}{\tt arXiv:#1}}
\bibliographystyle{halpha}

\medskip
\address{
{\bf Pavel Popov}\\
National Research University Higher School of Economics, Russian Federation\\
Faculty of Mathematics and Laboratory for Mirror Symmetry and Automorphic Forms\\
6 Usacheva ul.\\
119048, Moscow \\
e-mail: {\tt lightmor@gmail.com}
}

\end{document}